\newtheorem{thm}{Theorem}[section]
 \newtheorem{cor}{Corollary}[section]
\newtheorem{lem}{Lemma}[section]
 \newtheorem{prop}{Proposition}[section]
 \newtheorem{defn}{Definition}[section]
\theoremstyle{remark}
\newtheorem{rem}{Remark}[section]
\title{Global classical solutions for partially dissipative hyperbolic system of balance laws}
\author{Jiang Xu\thanks {E-mail: jiangxu\underline{ }79@nuaa.edu.cn, jiangxu\underline{ }79@yahoo.com.cn}\\
\small{\textit{Department of Mathematics}},\\\small{\textit{Nanjing
University of Aeronautics and Astronautics}},
\\ \small{\textit{Nanjing 211106, P.R.China}}\\[5mm]
Shuichi Kawashima\thanks{E-mail: kawashim@math.kyushu-u.ac.jp}\\
\small{\textit{Graduate School of Mathematics}},\\
\small{\textit{Kyushu University, Fukuoka 812-8581, Japan}}}
\date{}
\begin{document}
\maketitle{} \begin{abstract} This work is concerned with
($N$-component) hyperbolic system of balance laws in arbitrary space
dimensions. Under entropy dissipative assumption and the
Shizuta-Kawashima algebraic condition, a general theory on the
well-posedness of classical solutions in the framework of
Chemin-Lerner's spaces with critical regularity is established. To
do this, we first explore the functional space theory and develop an
elementary fact that indicates the relation between homogeneous and
inhomogeneous Chemin-Lerner's spaces. Then this fact allows to prove
the local well-posedness for general data and global well-posedness
for small data by using the Fourier frequency-localization argument.
Finally, we apply the new existence theory to a specific fluid
model-the compressible Euler equations with damping, and obtain the
corresponding results in critical spaces.
\end{abstract}

\hspace{-0.5cm}\textbf{Keywords:} \small{balance laws; entropy dissipative; classical solutions; Chemin-Lerner's spaces}\\

\hspace{-0.5cm}\textbf{AMS subject classification:} \small{35L60,\
35L45,\ 35F25}

\section{Introduction}
In this work, we consider the $N$-component hyperbolic system of
balance laws, which are partial differential equations of the form
\begin{equation}
U_{t}+\sum_{j=1}^{d}F^{j}(U)_{x_{j}}=G(U). \label{R-E1}
\end{equation}
Here $U$ is the unknown $N$-vector valued function of time $t\geq0$
and space coordinate $x=(x_{1},x_{2},\cdot\cdot\cdot,x_{d})(d\geq
1)$, taking values in an open convex set $\mathcal{O}_{U}\subset
\mathbb{R}^{N}$ (the state space). $F^{j}$ and $G$ are given
$N$-vector valued smooth functions on $\mathcal{O}_{U}$. The problem
we are interested in is the Cauchy one of the system (\ref{R-E1}),
so we supplement (\ref{R-E1}) with the following initial data
\begin{equation}U_{0}=U(0,x),\ \
  x\in\mathbb{R}^{d}.\label{R-E2} \end{equation}

Note that in the absence of source term $G(U)$, (\ref{R-E1}) reduces
to a system of conservation laws. In that case, it is well-known
that classical solutions develop the singularity (e.g., shock wave)
in finite time even when the initial data are small and smooth (see,
e.g., \cite{D}). System (\ref{R-E1}) with source terms typically
govern non-equilibrium processes in physics for media with
hyperbolic response as, for example, in gas dynamics. They also
arise in the numerical simulation of conservation laws by relaxation
schemes (see \cite{AN,JX,Z} and references cited therein). In these
applications, the source term $G(U)$ has, or can be transformed by a
linear transformation into, the form
$$G(U)=\left(
 \begin{array}{c}
                                                                   0 \\
                                                                  g(U) \\
                                                                 \end{array}
                                                               \right),
$$
with $0\in \mathbb{R}^{N_{1}}, g(U)\in \mathbb{R}^{N_{2}}$, where
$N_{1}+N_{2}=N(N_{1}\neq0)$. Obviously, the dissipation is not
present in all the components of the system. A concrete example is
the compressible Euler system with damping for perfect gas flow, see
\cite{STW,WY} or Section \ref{sec:5} in this paper. As shown by
\cite{STW,WY}, the dissipative mechanisms due to the damping term,
even if it enters only in the second equation, may prevent the
formation of singularities and guarantee the global existence in
time of classical solutions, at least for some restricted classes of
initial data.

Inspired by the concrete example, a natural problem is that what
conditions are posed on the general source term $G(U)$ such that it may
prevent the finite time breakdown of classical solutions for the
hyperbolic balance laws (\ref{R-E1}).
A reasonable answer is that the system (\ref{R-E1}) has an entropy
defined in \cite{KY} in a perfect manner and verifies the
Shizuta-Kawashima ([SK]) stability condition formulated in
\cite{SK}.

A notion of the entropy for (\ref{R-E1}) was first formulated by
Chen, Levermore and Liu \cite{CLL}. Their entropy was a natural
extension of the classical one due to Godunov \cite{G},
Friedrichs and Lax \cite{FL} for hyperbolic conservation laws,
i.e., (\ref{R-E1}) with $G(U)\equiv 0$, but it was not strong
enough to develop the global existence theory for (\ref{R-E1}).
Recently, under a technical requirement on the entropy dissipation
and the [SK] stability condition, Yong in \cite{Y} proved the
global existence of classical solutions in a neighborhood of a
constant equilibrium $\bar{U}\in \mathbb{R}^{N}$ satisfying
$G(\bar{U})=0$.
Hanouzet and Natalini \cite{HN} obtained a similar existence result
for one-dimensional problems in a similar situation.
For the asymptotic behavior in time of the global solutions,
in a similar situation, Bianchini, Hanouzet and Natalini
\cite{BHN} claimed the solutions approach the constant equilibrium state
$\bar{U}$ in the $L^{p}$-norm at the rate
$O(t^{-\frac{d}{2}(1-\frac{1}{p})})$, as $t\rightarrow\infty$, for
$p\in[\min\{d,2\},\infty]$ by using the Duhamel principle and a
detailed analysis of the Green kernel estimates for the linearized
problem.
Subsequently, the second author and Yong \cite{KY2} removed
the technical requirement on the entropy dissipation assumed
in \cite{Y,HN,BHN} by giving a perfect definition of the entropy
for (\ref{R-E1}) and proved the same asymptotic decay estimate
as in \cite{BHN} under less regularity assumption on the initial
data. The crucial point in \cite{KY2} is to
employ the time-weighted energy method which was first developed in
\cite{Ma} for compressible Navier-Stokes equations (see also
\cite{IK}), and this enables us to show the decay estimate
for $d\geq 2$ without assuming the $L^1$ property on the initial data.

It should be pointed out the above global existence and asymptotic
behavior results of classical solutions were established in the
framework of the existence theory of Kato and Majda \cite{K,M} for
generally quasi-linear hyperbolic systems (i.e.,
$\mathcal{C}_{T}H^{s}(\mathbb{R}^d)\cap
\mathcal{C}^1_{T}H^{s-1}(\mathbb{R}^d)$), where the regularity index
$\sigma$ is required to be high ($s>1+d/2$). For the case of the
critical regularity index $\sigma=1+d/2$, are there the
corresponding existence and stability for the balance laws
(\ref{R-E1})? To the best of our knowledge, this is a challenging
open problem and few results are available in this direction. In the
present paper, we shall explore the theory of functional spaces and
try to solve the open problem with the aid of the notion of entropy,
since it provides a proper setting to develop the existence theory
for the balance laws (\ref{R-E1}) in \cite{KY,Y}.

\subsection{Problem setting}
It is convenient to state basic ideas and main results of this
paper, we first review the notion of entropy and the stability
condition for (\ref{R-E1}) from \cite{KY,KY2,Y}. To begin with, we
set
\begin{eqnarray*}
\mathcal{M}=\{\psi\in\mathbb{R}^{N}: \langle\psi,G(U)\rangle=0\ \
\mbox{for any}\ U\in \mathcal{O}_{U}\},
\end{eqnarray*}
where the superscript $^{\top}$ represents the transpose. Then
$\mathcal{M}$ is a subset of $\mathbb{R}^{N}$ with $\mathrm{dim}
\mathcal{M}=N_{1}$. In the discrete kinetic theory, $\mathcal{M}$ is
called the space of summational (collision) invariants. From the
definition of $\mathcal{M}$, we have
\begin{eqnarray*}
G(U)\in\mathcal{M}^{\top}(\mbox{the orthogonal complement of}\
\mathcal{M}),\ \mbox{for any}\ U\in \mathcal{O}_{U}.
\end{eqnarray*}
Moreover, corresponding to the orthogonal decomposition
$\mathbb{R}^{N}=\mathcal{M}\oplus\mathcal{M}^{\top}$, we may write
$U\in\mathbb{R}^{N}$ as
\begin{eqnarray*}
U=\left(
    \begin{array}{c}
      U_{1} \\
      U_{2} \\
    \end{array}
  \right)
\end{eqnarray*}
such that $U\in\mathcal{M}$ holds if and only if $U_{2}=0$. We
denote by $\mathcal{E}$ the set of equilibrium state for the balance
laws (\ref{R-E1}):
\begin{eqnarray*}
\mathcal{E}=\{U\in \mathcal{O}_{U}: G(U)=0\}.
\end{eqnarray*}

In what follows, we give the notion of entropy.
\begin{defn}(\cite{KY})\label{defn1.1}
Let\ \ $\eta=\eta(U)$ be a smooth function defined in a convex open
set $\mathcal{O}_{U}\subset\mathbb{R}^{N}$. Then $\eta=\eta(U)$ is
called an entropy for the balance laws (\ref{R-E1}) if the following
statements hold:
\begin{itemize}
\item [$(\bullet)$] $\eta=\eta(U)$ is strictly convex in $\mathcal{O}_{U}$
in the sense that the Hessian $D^2_{U}\eta(U)$ is positive definite
for\ $U\in \mathcal{O}_{U}$;
\item [$(\bullet)$] $D_{U}F_{j}(U)(D^2_{U}\eta(U))^{-1}$ is symmetric for $U\in \mathcal{O}_{U}$ and $j=1,...,d;$
\item [$(\bullet)$] $U\in\mathcal{E}$ if and only if $(D_{U}\eta(U))^{\top}\in
\mathcal{M}$;
\item [$(\bullet)$] For $U\in\mathcal{E}$, the matrix
$D_{U}G(U)(D^2_{U}\eta(U))^{-1}$ is symmetric and nonpositive
definite, and its null space coincides with $\mathcal{M}$.
\end{itemize}
\end{defn}
\noindent Here and below, $D_{U}$ stands for the (row) gradient
operator with respect to $U$.

\begin{rem}

We would like to emphasize that Definition \ref{defn1.1} is a
perfect definition of the entropy for the balance laws (\ref{R-E1}),
and was introduced in \cite{KY} as a modification of the one first
formulated in \cite{CLL}. Some different definitions of entropy were
also introduced in the previous papers \cite{Y,HN,BHN}. These
definitions are, however, not good enough so that these papers have
to assume additional entropy dissipative properties such as the
property stated in Proposition \ref{prop1.1} below to get their
global existence and decay results. Note that we do not need to
assume Proposition \ref{prop1.1} for our purpose, since it directly
follows from Definition \ref{defn1.1}.
\end{rem}

Let $\eta(U)$ be the above entropy defined and set
\begin{eqnarray}W(U)=(D_{U}\eta(U))^{\top}. \label{R-E3}\end{eqnarray}
It was shown in \cite{KY} that the mapping $W=W(U)$ is a
diffeomorphism from $\mathcal{O}_{U}$ onto its range
$\mathcal{O}_{W}$. Let $U=U(W)$ be the inverse mapping which is also
a diffeomorphism from $\mathcal{O}_{W}$ onto its range
$\mathcal{O}_{U}$. Then (\ref{R-E1}) can be rewritten as

\begin{eqnarray}
A^{0}(W)W_{t}+\sum^{d}_{j=1}A^{j}(W)W_{x_{j}}=H(W) \label{R-E4}
\end{eqnarray}
with $$A^{0}(W)=D_{W}U(W),$$
$$A^{j}(W)=D_{W}F^{j}(U(W))=D_{U}F^{j}(U(W))D_{W}U(W),$$
$$H(W)=G(U(W)).$$

Moreover, let us define
$$L(W):=-D_{W}H(W)=-D_{U}G(U(W))D_{W}U(W).$$
By virtue of (\ref{R-E3}), we have
$D_{W}U(W)=D^2_{U}\eta(U(W))^{-1}$. Then it is not difficult to see
that (\ref{R-E1}) is a \textit{symmetric dissipative} system in the
sense defined as follows.

\begin{defn}(\cite{KY})\label{defn1.2}
The system (\ref{R-E4}) is called symmetric dissipative if the
following statements hold:
\begin{itemize}
\item [$(\bullet)$] $A^{0}(W)$ is symmetric and positive definite for $W\in\mathcal{O}_{W}$;
\item [$(\bullet)$] $A^{j}(W)$ is symmetric for $W\in \mathcal{O}_{W}$ and $j=1,...,d;$
\item [$(\bullet)$] $H(W)=0$ if and only if $W\in \mathcal{M}$;
\item [$(\bullet)$] For $W\in \mathcal{M}$, the matrix
$L(W)$ is symmetric and nonnegative definite, and its null space
coincides with $\mathcal{M}$.
\end{itemize}
\end{defn}

As shown by \cite{KY}, the symmetrization of balances laws can be
characterized by the existence of the entropy function.
\begin{thm}(\cite{KY})\label{thm1.1}
The following two statements are equivalent:
\begin{itemize}
\item [(i)] The System (\ref{R-E1}) has an entropy.
\item [(ii)] There is a diffeomorphism by which  (\ref{R-E1}) is
transformed to a symmetric dissipative system (\ref{R-E4}).
\end{itemize}
\end{thm}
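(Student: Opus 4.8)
The plan is to prove the two implications separately: $(i)\Rightarrow(ii)$ is essentially the chain of identities already displayed before the statement, while $(ii)\Rightarrow(i)$ requires reconstructing the entropy from the symmetrizer via a Legendre transform.

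For $(i)\Rightarrow(ii)$, let $\eta$ be an entropy and set $W=W(U)=(D_U\eta(U))^{\top}$ as in (\ref{R-E3}). Strict convexity makes $D_U W=D^2_U\eta(U)$ positive definite, hence invertible, so $W$ is a local diffeomorphism; injectivity follows from strict monotonicity of the gradient of a strictly convex function, so $W:\mathcal{O}_U\to\mathcal{O}_W$ is a global diffeomorphism with inverse $U=U(W)$ and $D_W U(W)=(D^2_U\eta(U(W)))^{-1}$ (this is the fact cited from \cite{KY}). Substituting $U=U(W)$ into (\ref{R-E1}) and applying the chain rule yields (\ref{R-E4}) with the coefficients $A^0,A^j,H$ as defined there. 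It then remains to check the four items of Definition \ref{defn1.2}, each a one-line verification: $A^0(W)=D_W U(W)=(D^2_U\eta)^{-1}$ is symmetric positive definite; $A^j(W)=D_U F^j(U(W))(D^2_U\eta)^{-1}$ is symmetric by the second item of Definition \ref{defn1.1}; $H(W)=0\Leftrightarrow U(W)\in\mathcal{E}\Leftrightarrow W=(D_U\eta(U(W)))^{\top}\in\mathcal{M}$ by the third item and the definition of $\mathcal{E}$; and, for $W\in\mathcal{M}$ (equivalently $U(W)\in\mathcal{E}$), $L(W)=-D_U G(U(W))(D^2_U\eta)^{-1}$ is minus the symmetric nonpositive matrix of the fourth item, hence symmetric nonnegative definite with null space $\mathcal{M}$.

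For $(ii)\Rightarrow(i)$, suppose $W=W(U)$ is a diffeomorphism from $\mathcal{O}_U$ onto $\mathcal{O}_W$ under which (\ref{R-E1}) becomes the symmetric dissipative system (\ref{R-E4}), with inverse $U=U(W)$. Since $A^0(W)=D_W U(W)$ is symmetric, the $\mathbb{R}^N$-valued map $W\mapsto U(W)$ has a symmetric Jacobian on $\mathcal{O}_W$; as $\mathcal{O}_W$ is the diffeomorphic image of the convex (hence simply connected) set $\mathcal{O}_U$, the Poincaré lemma gives a scalar potential $\psi$ on $\mathcal{O}_W$ with $U(W)=(D_W\psi(W))^{\top}$. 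Define the entropy by
\[
\eta(U):=\langle W(U),U\rangle-\psi(W(U)),
\]
so that $D_U\eta(U)=W(U)^{\top}$ (the two $U^{\top}D_U W$ cross terms cancel), recovering (\ref{R-E3}), and $D^2_U\eta(U)=D_U W(U)=(D_W U(W))^{-1}=A^0(W)^{-1}$, which is symmetric positive definite, so $\eta$ is strictly convex. The remaining three items of Definition \ref{defn1.1} are obtained by reading Definition \ref{defn1.2} backwards: from $A^j(W)=D_U F^j(U(W))\,A^0(W)$ one gets $D_U F^j(U(W))(D^2_U\eta)^{-1}=A^j(W)$, symmetric; the equilibrium characterization transfers through $W\in\mathcal{M}\Leftrightarrow H(W)=0\Leftrightarrow G(U(W))=0$; and from $L(W)=-D_U G(U(W))\,A^0(W)$ one gets $D_U G(U(W))(D^2_U\eta)^{-1}=-L(W)$, which for $W\in\mathcal{M}$ is symmetric nonpositive definite with null space $\mathcal{M}$.

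I expect the only genuine subtlety to be the global existence of the potential $\psi$ in the reverse direction, which needs $\mathcal{O}_W$ to be simply connected; I would justify this by noting $\mathcal{O}_W$ is homeomorphic to the convex set $\mathcal{O}_U$, or, to avoid topology altogether, define $\psi$ by a line integral from a fixed base point, path-independence being guaranteed by the symmetry of $D_W U$. Everything else is bookkeeping with the chain rule and the dictionary between Definitions \ref{defn1.1} and \ref{defn1.2}.
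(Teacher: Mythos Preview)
The paper does not actually prove this theorem: it is stated with a citation to \cite{KY} and no proof is given. Your argument is correct and is the standard one; indeed, the Legendre-type formula $\eta(U)=\langle W(U),U\rangle-\psi(W(U))$ you use for the reverse direction is exactly the construction the paper later invokes in Section~\ref{sec:5} (``it follows from the proof of Theorem~\ref{thm1.1} in \cite{KY} that $\langle U,W(U)\rangle-\tilde{\eta}(W)$ is an entropy function''), so your approach matches the cited source.

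One small correction: your closing remark that defining $\psi$ by a line integral ``avoids topology altogether'' is not quite right. Symmetry of $D_W U$ only gives closedness of the $1$-form $\sum_i U_i(W)\,dW_i$; path-independence of the line integral still requires $\mathcal{O}_W$ to be simply connected. Your first justification---that $\mathcal{O}_W$ is diffeomorphic to the convex (hence contractible) set $\mathcal{O}_U$---is the correct one, and it already settles the issue.
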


Also, we know from \cite{KY} that the source term $H(W)$ of the
symmetric dissipative system (\ref{R-E4}) has a useful expression,
which further leads to a qualitative estimate of the entropy
production term $D_{U}\eta(U)G(U)=W^{\top}H(W)$. For clarity, we
formulate them by a proposition.

\begin{prop}(\cite{KY})\label{prop1.1}
Fixed $\bar{W}\in\mathcal{M}$. Then
$$H(W)=-LW+r(W),$$
where $L=L(\bar{W}),\ r(W)\in \mathcal{M}^{\top}$ for all
$W\in\mathcal{O}_{W}$. Furthermore, it holds that $$|r(W)|\leq
C|W-\bar{W}||(I-\mathcal{P})W|$$ and
$$\langle W, H(W)\rangle\leq-C|(I-\mathcal{P})W|^2$$ for
$W\in\mathcal{O}_{W}$ close to $\bar{W}$, where $I$ the identity
mapping on $\mathbb{R}^{N}$ and $\mathcal{P}$ the orthogonal
projection onto $\mathcal{M}$.
\end{prop}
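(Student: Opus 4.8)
The plan is to derive everything directly from Definition~\ref{defn1.1}, passing to the $W$-variables via the diffeomorphism $W=W(U)$ and the symmetric dissipative structure of Definition~\ref{defn1.2} guaranteed by Theorem~\ref{thm1.1}. First I would observe that, since $H(W)=G(U(W))$ takes values in $\mathcal{M}^{\top}$ for every $W$ (this is the remark that $G(U)\in\mathcal{M}^{\top}$ made right after the definition of $\mathcal{M}$), and since $H(\bar W)=0$ because $\bar W\in\mathcal{M}$, Taylor's formula around $\bar W$ gives
$$H(W)=\int_0^1 D_W H(\bar W+\tau(W-\bar W))\,d\tau\;(W-\bar W).$$
Writing $D_W H(\bar W)=-L$ with $L=L(\bar W)$ and splitting off this leading term, we get $H(W)=-L(W-\bar W)+\tilde r(W)$ with $\tilde r(W)=\int_0^1\bigl(D_WH(\bar W+\tau(W-\bar W))-D_WH(\bar W)\bigr)d\tau\,(W-\bar W)$. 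Since $H$ is smooth, $\tilde r(W)=O(|W-\bar W|^2)$; to obtain the stated form $H(W)=-LW+r(W)$ one absorbs the constant $L\bar W$ into $r$, which is legitimate because $\bar W\in\mathcal{M}$ implies $L\bar W\in\mathcal{M}^{\top}$ (indeed, for $\psi\in\mathcal{M}$ one checks $\langle\psi,L\bar W\rangle=\langle L^{\top}\psi,\bar W\rangle$ and $L=L(\bar W)$ is symmetric with null space $\mathcal{M}$ by the last bullet of Definition~\ref{defn1.2}, so $L\psi=0$). Hence $r(W):=\tilde r(W)+L\bar W\in\mathcal{M}^{\top}$ for all $W$, and $r(W)=O(|W-\bar W|)$ near $\bar W$.

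Next I would sharpen the estimate on $r$ to the claimed bound $|r(W)|\le C|W-\bar W|\,|(I-\mathcal{P})W|$. The point is that $r$ must vanish on the equilibrium manifold in the $W$-variables: if $W\in\mathcal{M}$ then $H(W)=0$ (third bullet of Definition~\ref{defn1.2}) and also $-LW+L\bar W=-L(W-\bar W)=0$ since $W-\bar W\in\mathcal{M}$ lies in the null space of $L$; therefore $r(W)=0$ whenever $W\in\mathcal{M}$, i.e. whenever $(I-\mathcal{P})W=0$. Combining this with $r(\bar W)=0$ and smoothness, a standard argument (write $r(W)-r(\mathcal{P}W)$ as an integral of $D_W r$ along the segment from $\mathcal{P}W$ to $W$, whose direction is $(I-\mathcal{P})W$, and bound $D_W r$ on the relevant neighborhood) yields $|r(W)|=|r(W)-r(\mathcal{P}W)|\le C|(I-\mathcal{P})W|$; folding in the earlier $r(W)=O(|W-\bar W|)$ estimate — more precisely estimating $D_W r$ itself by $O(|W-\bar W|)$ since $r$ is quadratically small at $\bar W$ — gives the product bound. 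This is the step I expect to require the most care: one has to track that the relevant Jacobian $D_W r$ is simultaneously small at $\bar W$ (of order $|W-\bar W|$) and that the integration picks up exactly the factor $(I-\mathcal{P})W$, so the bookkeeping of which neighborhood and which Taylor remainder is used needs to be done cleanly.

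Finally, for the entropy production estimate $\langle W,H(W)\rangle\le -C|(I-\mathcal{P})W|^2$, I would compute
$$\langle W,H(W)\rangle=-\langle W,LW\rangle+\langle W,r(W)\rangle=-\langle(I-\mathcal{P})W,L(I-\mathcal{P})W\rangle+\langle(I-\mathcal{P})W,r(W)\rangle,$$
using that $L$ is symmetric with null space $\mathcal{M}$ (so $L\mathcal{P}W=0$ and $\langle\mathcal{P}W,L\,\cdot\,\rangle=0$) and that $r(W)\in\mathcal{M}^{\top}$ (so $\langle\mathcal{P}W,r(W)\rangle=0$, actually $\langle W,r(W)\rangle=\langle(I-\mathcal{P})W,r(W)\rangle$). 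Since $L$ restricted to $\mathcal{M}^{\top}$ is positive definite, the first term is $\le -c_0|(I-\mathcal{P})W|^2$ for some $c_0>0$; by the bound from the previous paragraph the cross term is $\le |(I-\mathcal{P})W|\,|r(W)|\le C|W-\bar W|\,|(I-\mathcal{P})W|^2$, which for $W$ close enough to $\bar W$ is $\le \tfrac{c_0}{2}|(I-\mathcal{P})W|^2$. Absorbing it leaves $\langle W,H(W)\rangle\le -\tfrac{c_0}{2}|(I-\mathcal{P})W|^2$, which is the claim with $C=c_0/2$. All constants depend only on the fixed $\bar W$ and the size of the neighborhood, and the whole argument is local around $\bar W$, which is exactly the regime in which the statement is asserted.
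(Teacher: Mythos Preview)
The paper does not actually prove Proposition~\ref{prop1.1}: it is quoted from \cite{KY} without argument, so there is no ``paper's own proof'' to compare against. Your argument is correct and is essentially the standard one. One small simplification: you spend a paragraph arguing that $L\bar W\in\mathcal{M}^{\top}$ so that absorbing it into $r$ is harmless, but in fact $\bar W\in\mathcal{M}$ lies in the null space of $L$ by the last bullet of Definition~\ref{defn1.2}, so $L\bar W=0$ and $r(W)=\tilde r(W)$ outright; this also makes $D_W r(\bar W)=0$ immediate, which is what you need for the product bound. With that shortcut your Step~2 reads cleanly: $r$ vanishes on $\mathcal{M}$ (since $H$ and $L$ both do), so $r(W)=r(W)-r(\mathcal{P}W)=\int_0^1 D_W r\bigl(\mathcal{P}W+t(I-\mathcal{P})W\bigr)\,dt\,(I-\mathcal{P})W$, and every point on that segment is within $2|W-\bar W|$ of $\bar W$, whence $|D_W r|\le C|W-\bar W|$ there. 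Step~3 is exactly right.
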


In order to obtain the effective \textit{a priori} estimates to
extend the local solutions, we also reduce (\ref{R-E4}) to a
symmetric dissipative system of \textit{normal form} in the sense
defined below.

\begin{defn}(\cite{KY})
The symmetric dissipative system (\ref{R-E4}) is said to be of the
normal form if $A^{0}(W)$ is block-diagonal associated with
orthogonal decomposition
$\mathbb{R}^{N}=\mathcal{M}\oplus\mathcal{M}^{\bot}$.
\end{defn}

Use the partition as $$U=\left(
                           \begin{array}{c}
                             U_{1} \\
                             U_{2} \\
                           \end{array}
                         \right),\ \ \ W=\left(
                           \begin{array}{c}
                             W_{1} \\
                             W_{2} \\
                           \end{array}
                         \right)
$$
associated with orthogonal decomposition
$\mathbb{R}^{N}=\mathcal{M}\oplus\mathcal{M}^{\bot}$. We consider
the mapping $U\rightarrow V$ defined by
$$V=\left(
                           \begin{array}{c}
                             V_{1} \\
                             V_{2} \\
                           \end{array}
                         \right)=\left(
                           \begin{array}{c}
                             U_{1} \\
                             W_{2} \\
                           \end{array}
                         \right)$$
where $W_{2}=(D_{U_{2}}\eta(U))^{\top}$. This is a diffeomorphism
from $\mathcal{O}_{U}$ onto its range $\mathcal{O}_{V}$. Denote by
$U=U(V)$ the inverse mapping which is a diffeomorphism from
$\mathcal{O}_{V}$ onto its range $\mathcal{O}_{U}$. Hence, $W=W(V)$
is the diffeomorphism composed by $W=W(U)$ and $U=U(V)$. After
straightforward calculations, we show that

\begin{eqnarray}
\tilde{A}^{0}(V)V_{t}+\sum^{d}_{j=1}\tilde{A}^{j}(V)V_{x_{j}}=\tilde{H}(V)\label{R-E5}
\end{eqnarray}
with $$\tilde{A}^{0}(V)=(D_{V}W)^{\top}A^{0}(W)D_{V}W,$$
$$\tilde{A}^{j}(V)=(D_{V}W)^{\top}A^{j}(W)D_{V}W,$$
$$\tilde{H}(V)=(D_{V}W)^{\top}H(W),$$
where $W$ is evaluated at $W(V)$. Precisely,
\begin{thm}(\cite{KY2})
The system (\ref{R-E5}) is the symmetric dissipative system of the
normal form and $\tilde{H}(V)=H(W)$. It holds $W\in\mathcal{M}$ if
and only if $V\in\mathcal{M}$ between the variables $W$ with $V$.
Furthermore, the matrix $\tilde{L}(V):=-D_{V}\tilde{H}(V)$ can be
expressed as
$$\tilde{L}(V)=(D_{V}W)^{\top}L(W)D_{V}W$$ and satisfies
$\tilde{L}(V)=L(W)$ if $V\in \mathcal{M}$ (i.e., $W\in
\mathcal{M}$).
\end{thm}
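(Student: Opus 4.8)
\emph{Proof proposal.}
The plan is to derive (\ref{R-E5}) directly from (\ref{R-E4}) through the change of unknowns $W=W(V)$, to record the block structure of the Jacobian $D_{V}W$ relative to the splitting $\mathbb{R}^{N}=\mathcal{M}\oplus\mathcal{M}^{\bot}$, and then to read off each assertion by elementary matrix manipulations. Since $V=(V_{1},V_{2})^{\top}=(U_{1},W_{2})^{\top}$, the $\mathcal{M}^{\bot}$-component of $W$ equals $V_{2}$, that is $W_{2}(V)=V_{2}$, so
$$D_{V}W=\left(\begin{array}{cc} P & Q \\ 0 & I\end{array}\right),\qquad P:=D_{V_{1}}W_{1},\quad Q:=D_{V_{2}}W_{1},$$
and $P$ (hence $D_{V}W$) is invertible because $W=W(V)$ is a diffeomorphism. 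Inserting $W=W(V)$ into (\ref{R-E4}) and multiplying on the left by $(D_{V}W)^{\top}$ yields (\ref{R-E5}) with $\tilde{A}^{0}=(D_{V}W)^{\top}A^{0}D_{V}W$, $\tilde{A}^{j}=(D_{V}W)^{\top}A^{j}D_{V}W$ and $\tilde{H}=(D_{V}W)^{\top}H$; symmetry of $\tilde{A}^{0},\tilde{A}^{j}$ and positive definiteness of $\tilde{A}^{0}$ then follow at once, being congruences of the symmetric (respectively symmetric positive definite) matrices $A^{0},A^{j}$ by the invertible matrix $D_{V}W$.

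For the normal form I would use $A^{0}=D_{W}U$ together with the chain rule $A^{0}D_{V}W=D_{W}U\,D_{V}W=D_{V}U$, so that $\tilde{A}^{0}=(D_{V}W)^{\top}D_{V}U$. Since $U_{1}=V_{1}$, the matrix $D_{V}U$ is block lower triangular with identity $(1,1)$-block, and $(D_{V}W)^{\top}$ is block lower triangular as well; hence the $(1,2)$-block of $\tilde{A}^{0}$ vanishes, and because $\tilde{A}^{0}$ is symmetric its $(2,1)$-block vanishes too, so $\tilde{A}^{0}$ is block-diagonal with respect to $\mathbb{R}^{N}=\mathcal{M}\oplus\mathcal{M}^{\bot}$, which is precisely the normal form. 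Next, $H(W)=G(U(W))\in\mathcal{M}^{\bot}$ by the very definition of $\mathcal{M}$, and the transpose $(D_{V}W)^{\top}=\left(\begin{array}{cc}P^{\top}&0\\ Q^{\top}&I\end{array}\right)$ restricts to the identity on $\mathcal{M}^{\bot}$; therefore $\tilde{H}(V)=(D_{V}W)^{\top}H(W)=H(W)$. The equivalence $W\in\mathcal{M}\Leftrightarrow V\in\mathcal{M}$ is merely the identity $V_{2}=W_{2}$ together with the remark that belonging to $\mathcal{M}$ means the vanishing of the $\mathcal{M}^{\bot}$-component; combined with $\tilde{H}(V)=H(W)$ this also gives $\tilde{H}(V)=0\Leftrightarrow V\in\mathcal{M}$, which is the third item of Definition \ref{defn1.2} for the system (\ref{R-E5}).

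Finally, differentiating the identity $\tilde{H}(V)=H(W(V))$ established above yields $\tilde{L}(V)=-D_{V}\tilde{H}(V)=-D_{W}H(W)\,D_{V}W=L(W)D_{V}W$. The range of $L(W)=-D_{W}H(W)$ is contained in $\mathcal{M}^{\bot}$ (because $H$ is $\mathcal{M}^{\bot}$-valued), and $(D_{V}W)^{\top}$ again acts as the identity on $\mathcal{M}^{\bot}$, so $(D_{V}W)^{\top}L(W)=L(W)$ and consequently $\tilde{L}(V)=(D_{V}W)^{\top}L(W)D_{V}W$, the claimed expression. When $V\in\mathcal{M}$, equivalently $W\in\mathcal{M}$, Proposition \ref{prop1.1} (equivalently, the last item of Definition \ref{defn1.2}) gives $\ker L(W)=\mathcal{M}$; together with the inclusion $\mathrm{range}\,L(W)\subset\mathcal{M}^{\bot}$ this forces $L(W)$ to be block-diagonal with zero $(1,1)$-block, while $D_{V}W-I$ has vanishing second block-row, whence $L(W)(D_{V}W-I)=0$ and therefore $\tilde{L}(V)=L(W)D_{V}W=L(W)$. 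The surviving requirements of Definition \ref{defn1.2} for (\ref{R-E5}) at $V\in\mathcal{M}$, namely symmetry, nonnegative definiteness and null space equal to $\mathcal{M}$, are then inherited verbatim from the corresponding properties of $L(W)$. The one place calling for genuine care is the block-triangular bookkeeping, and in particular invoking the symmetry of $\tilde{A}^{0}$ at the right moment in the normal-form step; the remainder is direct substitution.
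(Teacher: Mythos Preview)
Your argument is correct and complete. The paper itself does not prove this theorem; it is quoted from \cite{KY2} without proof (the surrounding text merely records the formulas for $\tilde{A}^{0},\tilde{A}^{j},\tilde{H}$ after ``straightforward calculations''), so there is no in-paper proof to compare against. Your derivation is precisely the kind of computation the citation is standing in for: the key observations are (i) the block upper-triangular form of $D_{V}W$ coming from $W_{2}=V_{2}$, (ii) the identity $\tilde{A}^{0}=(D_{V}W)^{\top}D_{V}U$ with $D_{V}U$ block lower-triangular since $U_{1}=V_{1}$, which together with symmetry yields block-diagonality, and (iii) the fact that $(D_{V}W)^{\top}$ restricts to the identity on $\mathcal{M}^{\bot}$, which simultaneously gives $\tilde{H}=H$ and allows the insertion of $(D_{V}W)^{\top}$ in the formula for $\tilde{L}$. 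The final step $\tilde{L}(V)=L(W)$ for $V\in\mathcal{M}$ only requires $\mathcal{M}\subset\ker L(W)$ (so that the first block-column of $L(W)$ vanishes), which you correctly extract from Definition~\ref{defn1.2}; the range inclusion is not strictly needed there, but it does no harm.
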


As a direct consequence, we have an analogue of Proposition
\ref{prop1.1}.

\begin{cor}\label{cor1.1}
Fixed $\bar{V}\in\mathcal{M}$. Then
$$\tilde{H}(V)=-LV+\tilde{r}(V),$$
where $L=L(\bar{W}),\ \tilde{r}(V)\in \mathcal{M}^{\top}$ for all
$V\in\mathcal{O}_{V}$. Furthermore, $$|\tilde{r}(V)|\leq
C|V-\bar{V}||(I-\mathcal{P})V|.$$ and
$$\langle V,\tilde{H}(V)\rangle\leq-C|(I-\mathcal{P})V|^2$$ for
$V\in\mathcal{O}_{V}$ close to $\bar{V}$, where $I$ the identity
mapping on $\mathbb{R}^{N}$ and $\mathcal{P}$ the orthogonal
projection onto $\mathcal{M}$.
\end{cor}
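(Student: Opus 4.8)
The plan is to transport Proposition \ref{prop1.1} through the change of variables $V\mapsto W(V)$, using the identity $\tilde H(V)=H(W)$ supplied by the normal-form Theorem stated just above. Writing $W=W(V)$ and $\bar W:=W(\bar V)$ --- which lies in $\mathcal M$ since $\bar V\in\mathcal M$ and $W\in\mathcal M$ if and only if $V\in\mathcal M$ --- Proposition \ref{prop1.1} gives $\tilde H(V)=H(W)=-LW+r(W)$ with $L=L(\bar W)$ and $r(W)\in\mathcal M^{\top}$. The natural candidate is therefore $\tilde r(V):=r(W(V))$, once one has justified replacing $-LW$ by $-LV$; after that, everything reduces to checking the membership $\tilde r(V)\in\mathcal M^{\top}$, the quadratic bound on $\tilde r$, and the dissipation inequality.

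The structural input is the relation between the two coordinate systems. In the partition associated with $\mathbb R^{N}=\mathcal M\oplus\mathcal M^{\top}$ one has $W=(W_{1},W_{2})$ and $V=(V_{1},V_{2})=(U_{1},W_{2})$, so $W-V=(W_{1}-U_{1},0)\in\mathcal M$. Because the null space of $L=L(\bar W)$ coincides with $\mathcal M$ (symmetric dissipativity), this gives $LV=LW$, hence $\tilde H(V)=-LV+r(W(V))$ and we may indeed take $\tilde r(V)=r(W(V))$. Moreover, since $\mathcal P$ is the orthogonal projection onto $\mathcal M$, the two $\mathcal M^{\top}$-components coincide: $(I-\mathcal P)V=W_{2}=(I-\mathcal P)W$. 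Finally $r(W)\in\mathcal M^{\top}$ and $LW$ belongs to the range of the symmetric matrix $L$, which is $\mathcal M^{\top}$; hence $H(W)=\tilde H(V)\in\mathcal M^{\top}$.

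With these identifications the three assertions are immediate. First, $\tilde r(V)=r(W(V))\in\mathcal M^{\top}$. For the quadratic bound, Proposition \ref{prop1.1} gives $|\tilde r(V)|=|r(W)|\le C|W-\bar W|\,|(I-\mathcal P)W|$; since $W=W(V)$ is a diffeomorphism it is locally Lipschitz, so $|W-\bar W|\le C|V-\bar V|$ for $V$ near $\bar V$, and with $(I-\mathcal P)W=(I-\mathcal P)V$ this yields $|\tilde r(V)|\le C|V-\bar V|\,|(I-\mathcal P)V|$. For the dissipation inequality, using $H(W)\in\mathcal M^{\top}$ and $\mathcal P V\in\mathcal M$ we compute $\langle V,\tilde H(V)\rangle=\langle V,H(W)\rangle=\langle(I-\mathcal P)V,H(W)\rangle=\langle(I-\mathcal P)W,H(W)\rangle=\langle W,H(W)\rangle\le-C|(I-\mathcal P)W|^{2}=-C|(I-\mathcal P)V|^{2}$, valid for $V$ close to $\bar V$ (equivalently, $W$ close to $\bar W$).

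The only delicate point --- the ``main obstacle'', modest as it is --- is the bookkeeping between the $V$- and $W$-coordinates: one must make sure that $W-V$ lies in the null space $\mathcal M$ of $L$, so that the linearization of $\tilde H$ at $\bar V$ is $-LV$ with the \emph{same} matrix $L=L(\bar W)$, and that the projection $I-\mathcal P$ acts identically on $W$ and on $V$. Granting these, Corollary \ref{cor1.1} is a direct translation of Proposition \ref{prop1.1}.
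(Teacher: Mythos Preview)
Your proof is correct and is precisely the argument the paper has in mind: the corollary is stated there merely as ``a direct consequence'' of the normal-form theorem and Proposition~\ref{prop1.1}, with no details given, and you have supplied exactly those details --- the key observations being $W-V\in\mathcal M=\ker L$ (so $LW=LV$) and $(I-\mathcal P)W=(I-\mathcal P)V$, after which everything transfers verbatim from the $W$-variable to the $V$-variable.
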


Finally, we formulate the [SK] stability condition for (\ref{R-E5}),
since we deal with the symmetric dissipative system of normal form
in the subsequent analysis. Let $\bar{V}\in\mathcal{M}$ be a
constant state and consider the linearized form of (\ref{R-E5}) at
$V=\bar{V}$:

\begin{equation}
\tilde{A}^{0}V_{t}+\sum_{j=1}^{d}\tilde{A}^{j}V_{x_{j}}+LV=0,\label{R-E6}
\end{equation}
where $\tilde{A}^{0}=\tilde{A}^{0}(\bar{V}),\
\tilde{A}^{j}=\tilde{A}^{j}(\bar{V})$ and $L=L(\bar{W})$. Taking the
Fourier transform on (\ref{R-E6}) with respect to $x\in
\mathbb{R}^{d}$, we obtain
\begin{equation}
\tilde{A}^{0}\hat{V}_{t}+i|\xi|A(\omega)\hat{V}+L\hat{V}=0,
\label{R-E7}
\end{equation}
where $\tilde{A}(\omega):=\sum_{j=1}^{d}\tilde{A}^{j}\omega_{j}$
with $\omega=\xi/|\xi|\in \mathbb{S}^{d-1}$ (the unit sphere in
$\mathbb{R}^{d}$). Let $\lambda=\lambda(i\xi)$ be the eigenvalues of
(\ref{R-E7}), which solves the characteristic equation
$$\mathrm{det}(\lambda\tilde{A}^{0}+i|\xi|\tilde{A}(\omega)+L)=0.$$

Then the stability condition for (\ref{R-E6}) is stated as follows.

\begin{defn}\label{defn1.4}
The symmetric form (\ref{R-E5}) satisfies the stability condition at
$\bar{V}\in\mathcal{M}$ if the following holds true: Let $\phi\in
\mathbb{R}^{N}$ satisfies $\phi\in\mathcal{M}$ (i.e., $L\phi=0$) and
$\lambda\tilde{A}^{0}+\tilde{A}(\omega)\phi=0$ for some
$(\lambda,\omega)\in \mathbb{R}\times\mathbb{S}^{d-1}$, then
$\phi=0$.
\end{defn}

The stability condition was first formulated in \cite{SK} for
symmetric hyperbolic-parabolic coupled systems including our present
symmetric hyperbolic system (\ref{R-E4}) or (\ref{R-E5}). In
addition, the characterization of the stability condition was also
given by \cite{SK}.

\begin{thm}\label{thm1.3} The following statements are
equivalent:
\begin{itemize}
\item [$(\bullet)$] The system (\ref{R-E5}) satisfies the stability condition at $\bar{V}\in\mathcal{M}$;
\item [$(\bullet)$] $\mathrm{Re}\lambda(i\xi)<0$ for $\xi\neq0$;
\item [$(\bullet)$] There is a positive constant $c$ such that $\mathrm{Re}\lambda(i\xi)\leq-c|\xi|^2/(1+|\xi|^2)$ for $\xi\in\mathbb{R}^{d}$;
\item [$(\bullet)$] There is an $N\times N$ matrix $\tilde{K}(\omega)$ depending smooth on $\omega\in\mathbb{S}^{d-1}$ satisfying the
properties:\\(i) $\tilde{K}(-\omega)=-\tilde{K}(\omega)$ for
$\omega\in\mathbb{S}^{d-1}$;\\ (ii) $\tilde{K}(\omega)\tilde{A}^{0}$
is
skew-symmetric for $\omega\in\mathbb{S}^{d-1}$;\\
(iii)$[\tilde{K}(\omega)A(\omega)]'+L$ is positive definite for
$\omega\in\mathbb{S}^{d-1}$, where $[X]'$ denotes the symmetric

\hspace{6mm} part of the matrix $X$.
\end{itemize}
\end{thm}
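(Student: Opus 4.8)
The plan is to organize the four statements into a closed cycle of implications, after a preliminary normalization. Since $\tilde A^{0}=\tilde A^{0}(\bar V)$ is symmetric positive definite, I would perform the substitution $\hat V\mapsto (\tilde A^{0})^{1/2}\hat V$, after which $\tilde A^{0}$ becomes the identity, $\tilde A(\omega)$ stays real symmetric, $L$ stays real symmetric and nonnegative with $\ker L=\mathcal M$, and the properties of the compensating matrix in the fourth bullet are preserved for the conjugated matrix. Throughout I write the (conjugated) equation in Fourier variables as $\hat V_{t}+i|\xi|\tilde A(\omega)\hat V+L\hat V=0$. The cycle I would run is: (fourth bullet) $\Rightarrow$ (third bullet) $\Rightarrow$ (second bullet), together with (second bullet) $\Leftrightarrow$ (first bullet) and (first bullet) $\Rightarrow$ (fourth bullet); the implication (third) $\Rightarrow$ (second) is immediate.

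For (fourth) $\Rightarrow$ (third) I would use the corrected-energy argument. Taking the real part of the inner product of the equation with $\hat V$ gives the basic identity $\tfrac12\frac{d}{dt}|\hat V|^{2}=-\langle L\hat V,\hat V\rangle\le 0$, which already yields $\mathrm{Re}\,\lambda(i\xi)\le 0$ but controls only the $\mathcal M^{\perp}$-component. To recover dissipation on $\mathcal M$, set
$$E(t,\xi)=|\hat V|^{2}+\kappa\,\frac{|\xi|}{1+|\xi|^{2}}\,\mathrm{Im}\,\langle \tilde K(\omega)\hat V,\hat V\rangle,$$
with $\tilde K$ the conjugated compensating matrix and $\kappa>0$ small. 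Property (i)--(ii) (skew-symmetry of $\tilde K$) makes $E$ real and $E\sim|\hat V|^{2}$ for $\kappa$ small, while differentiating in $t$, substituting the equation, and using property (iii) (positive definiteness of $[\tilde K(\omega)\tilde A(\omega)]'+L$) produces from the correction term a quantity $\le -c\kappa\,\frac{|\xi|^{2}}{1+|\xi|^{2}}|\hat V|^{2}$ up to errors that are absorbed into the good term and into $|\hat V|^{2}$. This leads to $\frac{d}{dt}E\le -c'\,\frac{|\xi|^{2}}{1+|\xi|^{2}}E$, and substituting $\hat V=e^{\lambda t}\phi$ gives $\mathrm{Re}\,\lambda(i\xi)\le -c'|\xi|^{2}/(2(1+|\xi|^{2}))$.

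For the equivalence (first) $\Leftrightarrow$ (second) I would argue with eigenvectors. Since the basic identity forces $\mathrm{Re}\,\lambda\le0$ always, a failure of the second bullet means $\mathrm{Re}\,\lambda(i\xi_{0})=0$ for some $\xi_{0}\neq0$; with $\hat V=e^{\lambda t}\phi$ the identity $\tfrac12\frac{d}{dt}|\hat V|^{2}=-\langle L\hat V,\hat V\rangle$ forces $\langle L\phi,\phi\rangle=0$, hence $L\phi=0$ and $\phi\in\mathcal M$, and then the eigenvalue relation collapses to $\mu\phi+\tilde A(\omega_{0})\phi=0$ with $\mu:=\lambda/(i|\xi_{0}|)$ real, so the stability condition of Definition~\ref{defn1.4} fails. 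Conversely, any $\phi\neq0$ with $L\phi=0$ and $\mu\,\tilde A^{0}\phi+\tilde A(\omega_{0})\phi=0$ produces, via $\hat V=e^{i\mu|\xi_{0}|t}\phi$, a purely imaginary eigenvalue for every $\xi_{0}$ in the direction $\omega_{0}$, so the second bullet fails. (In the non-normalized variables one keeps the weight $\tilde A^{0}$ in the energy identity.)

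The remaining implication (first) $\Rightarrow$ (fourth) --- the construction of a smooth compensating matrix $\tilde K(\omega)$ from the stability condition --- is the genuine obstacle, and I would follow Shizuta--Kawashima \cite{SK}. In the normal form, $\tilde A^{0}$ and $L$ are block-diagonal with respect to $\mathcal M\oplus\mathcal M^{\perp}$ and $L$ vanishes on the $\mathcal M$-block, while the stability condition is equivalent to the statement that $\ker \tilde A_{21}(\omega)$ (the $\mathcal M^{\perp}\!\leftarrow\!\mathcal M$ block of $\tilde A(\omega)$) contains no eigenvector of the reduced pencil $(\tilde A^{0}_{11},\tilde A_{11}(\omega))$ on $\mathcal M$. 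One then seeks $\tilde K(\omega)$ of off-diagonal (hence, after normalization, skew-symmetric) type coupling $\mathcal M$ and $\mathcal M^{\perp}$, designed so that $[\tilde K(\omega)\tilde A(\omega)]'$ is positive definite on $\mathcal M$; the stability condition is precisely what makes such a $\tilde K$ available, since it prevents the coupling block from being annihilated on the degenerate directions. The delicate points, where I expect most of the work to lie, are: (a) calibrating the size of the coupling --- typically through a small parameter multiplying it, taken small relative to $L$ --- so that $[\tilde K(\omega)\tilde A(\omega)]'+L$ is positive definite on \emph{all} of $\mathbb R^{N}$, not just on $\mathcal M$; and (b) arranging the construction so that $\tilde K$ depends smoothly on $\omega\in\mathbb S^{d-1}$ and is odd, $\tilde K(-\omega)=-\tilde K(\omega)$, the parity being inherited from $\tilde A(-\omega)=-\tilde A(\omega)$ once the construction is made canonical. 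Undoing the normalization produces $K(\omega)$ with the three listed properties, which closes the cycle.
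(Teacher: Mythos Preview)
The paper does not prove Theorem~\ref{thm1.3}; it is quoted as a known characterization of the [SK] stability condition and attributed to Shizuta--Kawashima \cite{SK} (see the sentence immediately preceding the theorem). There is therefore no in-paper proof to compare your proposal against.

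That said, your outline is essentially the classical argument from \cite{SK}. The normalization by $(\tilde A^{0})^{1/2}$, the corrected-energy computation for (fourth) $\Rightarrow$ (third), and the eigenvector argument for (first) $\Leftrightarrow$ (second) are all standard and correct. Your identification of (first) $\Rightarrow$ (fourth) as the substantive step is accurate, and your block reformulation of the stability condition in the normal form is the right starting point. The part you flag as delicate --- the actual construction of a smooth, odd $\tilde K(\omega)$ with $[\tilde K(\omega)\tilde A(\omega)]'+L>0$ --- is indeed where the work lies in \cite{SK}; your sketch gestures at the mechanism but does not carry out the construction, so as written this implication remains a citation rather than a proof. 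If you intend a self-contained argument you would need to supply that construction explicitly (the original paper builds $\tilde K$ from spectral projections of $\tilde A(\omega)$ and uses compactness of $\mathbb S^{d-1}$ to handle the smoothness and uniform positivity), but for the purposes of this paper a reference to \cite{SK} is exactly what the authors do.
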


\begin{rem}
We also formulate the stability condition for (\ref{R-E4}) at the
constant state $\bar{W}\in\mathcal{M}$. It turns out that the
stability condition for (\ref{R-E4}) at the constant state
$\bar{W}\in\mathcal{M}$ is equivalent to the stability condition for
(\ref{R-E5}) at $\bar{V}\in\mathcal{M}$.
\end{rem}

\subsection{Main results}
Recently, there are many well-posedness studies on the extension of
the regularity class of initial data, such as using Besov spaces, or
Triebel-Lizorkin spaces (see, e.g.\cite{C2,CH,DA,X} and references
therein). Most of those results are concerned on specific equations.
In this paper, we confine the attention to a rather general type of
equations as (\ref{R-E1})-(\ref{R-E2}), furthermore,  we study the
case of critical regularity index $(\sigma=1+d/2)$ where the
classical existence theory of Kato and Majda fails. In this
direction, there are only partial results available. In \cite{I},
Iftimie first considered (\ref{R-E5}) with $\tilde{A}^{0}(V)=I_{N}$
and gave a local existence for symmetric conservation laws
pertaining to data in the Besov space
$B^{1+d/2}_{2,1}(\mathbb{R}^d)$, which is a subalgebra embedded in
$\mathcal{C}^1(\mathbb{R}^d)$, and the lower bound of the maximal
time of existence was also obtained. Using the standard iterative
method, Chae \cite{C1} established a similar local existence for
(\ref{R-E5}) independently, where he assumed that the condition
$C^{-1}I_{N}\leq\tilde{A}^{0}(V)\leq CI_{N} (\forall V\in
\mathbb{R}^{N})$. In their works, they both considered the symmetric
conservation laws (\ref{R-E5}), i.e. without the source term
$\tilde{H}(V)$. However, up to now, the well-posedness and stability
theory for general balance laws in critical spaces still are
unknown.

The balance laws (\ref{R-E1}) with an entropy can be symmetrized,
however, the local existence of classical solutions of this paper
does not follow from the works of Iftimie and Chae \cite{I,C1}
directly. Actually, we can remove their crucial assumption
$C^{-1}I_{N}\leq\tilde{A}^{0}(V)\leq CI_{N}$, although it is
satisfied by many concrete examples. We use the classical iteration
argument and Friedrichs' regularization method to obtain the local
existence. To develop the global local existence of classical
solutions in critical spaces, the main ingredient is to construct
uniform \textit{a priori} estimates independent of time $T$
according to the dissipative mechanisms produced by source terms.
Due to the partially dissipative structure of source term $Q(U)$,
there occurs a technical obstruction. Precisely, we only capture the
dissipation rate from the partial components $(I-\mathcal{P})U$
rather than the total solutions $U$, which leads to the absence of
the low-frequency part $\Delta_{-1}(\mathcal{P}U)$ in
frequency-localization estimates. It seems that there is no chance
to obtain \textit{a priori} estimates from the standard definition
of the critical Besov space $B^{1+d/2}_{2,1}(\mathbb{R}^d)$.
Fortunately, the time-space Besov spaces (Chemin-Lerner's spaces)
$\widetilde{L}^{\rho}_{T}(B^{s}_{p,r})$ help us to overcome the
difficulty. The Chemin-Lerner's spaces were first introduced in
\cite{C2} by Chemin and Lerner, which is the refinement of the usual
spaces $L^{\rho}_{T}(B^{s}_{p,r})$. Furthermore, we explore the
functional space theory and develop a basic fact that indicates the
relation between homogeneous and inhomogeneous Chemin-Lerner's
spaces, see Proposition \ref{prop6.1} in Appendix. Then it follows
from this fact that some frequency-localization estimates in
Chemin-Lerner's spaces with critical regularity are established
effectively.

Our main results are stated as follows, where the regularity index
$\sigma=1+d/2$. First of all, we state the local well-posedness
theorem of classical solutions to the Cauchy problem
(\ref{R-E1})-(\ref{R-E2}).

\begin{thm} \label{thm1.4}
Suppose the balance laws (\ref{R-E1}) admits an entropy defined by
Definition \ref{defn1.1}. Let $\bar{U}\in \mathcal{E}$ be a constant
state. If the initial date $U_{0}$ satisfy $U_{0}-\bar{U}\in
B^{\sigma}_{2,1}(\mathbb{R}^{d})$ and take values in a compact
subset of $\mathcal{O}_{U}$, then there exists a time $T_{1}>0$ such
that
\begin{itemize}
\item[(i)] Existence:  the
Cauchy problem (\ref{R-E1})-(\ref{R-E2}) has a unique solution $U\in
\mathcal{C}^{1}([0,T_{1}]\times \mathbb{R}^{d})$ belongs to
$$U-\bar{U}\in \widetilde{\mathcal{C}}_{T_{1}}(B^{\sigma}_{2,1}(\mathbb{R}^{d}))
\cap\widetilde{\mathcal{C}}^{1}_{T_{1}}(B^{\sigma-1}_{2,1}(\mathbb{R}^{d})).
$$

\item[(ii)] Blow-up criterion: there exists a constant $C_{0}>0$
such that the maximal time $T^{*}$ of existence of such a solution
can be bounded from below by
$T^{*}\geq\frac{C_{0}}{\|U_{0}-\bar{U}\|_{B^{\sigma}_{2,1}}}.$
Moreover, if $T^{*}$ is finite, then
$$\limsup_{t\rightarrow T^{*}}\|U-\bar{U}\|_{B^{\sigma}_{2,1}}=\infty$$
if and only if $$\int^{T^{*}}_{0}\|\nabla
U\|_{L^{\infty}}dt=\infty.$$
\end{itemize}
\end{thm}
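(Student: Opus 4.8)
The plan is to follow the classical Kato–Majda program, but carried out at the critical regularity level $\sigma = 1+d/2$ and in the Chemin–Lerner spaces $\widetilde{L}^\rho_T(B^s_{2,1})$ rather than the Sobolev spaces $H^s$. Since the balance law \eqref{R-E1} has an entropy, Theorem \ref{thm1.1} lets us first pass to the symmetric dissipative normal form \eqref{R-E5}; writing $V = \bar V + v$ with $\bar V \in \mathcal M$ the equilibrium corresponding to $\bar U$, and noting that the source term is harmless for \emph{local} existence (it is a smooth function of $V$, hence Lipschitz on the relevant compact set), it suffices to prove local well-posedness for a general symmetric hyperbolic system $\tilde A^0(V) V_t + \sum_j \tilde A^j(V) V_{x_j} = \tilde H(V)$ with $B^\sigma_{2,1}$ data taking values in a compact subset of $\mathcal O_V$. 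Crucially I want to avoid the hypothesis $C^{-1}I_N \le \tilde A^0 \le C I_N$ of Iftimie and Chae: because $V_0$ ranges in a compact subset of the open set $\mathcal O_V$, continuity of $\tilde A^0$ and its positive-definiteness there give uniform bounds $C^{-1}I_N \le \tilde A^0(V_0) \le C I_N$ on the image, and these will be propagated for a short time by the transport structure (the solution stays in a slightly larger compact set as long as $\|v\|_{L^\infty}$ is controlled, which $B^\sigma_{2,1} \hookrightarrow L^\infty$ gives).

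The main construction is a Friedrichs-type iteration. First regularize: set $V^0 \equiv \bar V$, and define $V^{n+1}$ as the solution of the \emph{linear} symmetric hyperbolic system $\tilde A^0(V^n) V^{n+1}_t + \sum_j \tilde A^j(V^n) V^{n+1}_{x_j} = \tilde H(V^n)$ with data $V^{n+1}|_{t=0} = S_n V_0$ (Littlewood–Paley truncation, so each iterate has smooth data and the linear theory applies). The key is a \emph{uniform} energy estimate in $\widetilde{L}^\infty_T(B^\sigma_{2,1})$: apply $\Delta_q$ to the linear equation, take the $L^2$ inner product with $\Delta_q V^{n+1}$ weighted by $\tilde A^0(V^n)$, and integrate; the symmetry of $\tilde A^j$ converts the flux term into a commutator $[\Delta_q, \tilde A^j(V^n)]\partial_{x_j} V^{n+1}$ plus a term involving $\partial_t \tilde A^0(V^n)$ and $\partial_{x_j}\tilde A^j(V^n)$, which are estimated via the chain rule in Besov spaces and the product/composition laws for $B^\sigma_{2,1}$ (here $\sigma = 1+d/2$ is exactly the threshold making $B^\sigma_{2,1}$ an algebra continuously embedded in $\mathcal C^1$, so all nonlinear terms close). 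Commutator estimates of the form $\|[\Delta_q, f]\partial g\|_{L^2} \lesssim c_q 2^{-q\sigma}\|\nabla f\|_{B^{\sigma-1}_{2,1}}\|g\|_{B^\sigma_{2,1}}$ with $\sum c_q = 1$ give, after summation over $q$, a Gronwall inequality $\|v^{n+1}(t)\|_{\widetilde{L}^\infty_t(B^\sigma_{2,1})} \le C\|S_n V_0 - \bar V\|_{B^\sigma_{2,1}}\exp(C\int_0^t \|v^n\|_{B^\sigma_{2,1}}\,ds)$; choosing $T_1$ small (depending only on $\|V_0 - \bar V\|_{B^\sigma_{2,1}}$ and the compact set) yields a uniform bound $\|v^n\|_{\widetilde{L}^\infty_{T_1}(B^\sigma_{2,1})} \le R$ for all $n$, and the equation then bounds $\partial_t v^n$ in $\widetilde{L}^\infty_{T_1}(B^{\sigma-1}_{2,1})$. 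Convergence of $(V^n)$ follows by showing $(V^{n+1}-V^n)$ is contractive in the \emph{lower-order} norm $\widetilde{L}^\infty_{T_1}(B^{\sigma-1}_{2,1})$ — the standard trick, since one loses a derivative in the difference estimate but regains it via the uniform high-norm bound and interpolation. The limit $V$ then lies in $\widetilde{L}^\infty_{T_1}(B^\sigma_{2,1})$ by Fatou for Chemin–Lerner spaces, and a continuity-in-time argument (using that the flux and source are continuous in the $B^{\sigma-1}_{2,1}$ topology and the equation) upgrades this to $V - \bar V \in \widetilde{\mathcal C}_{T_1}(B^\sigma_{2,1}) \cap \widetilde{\mathcal C}^1_{T_1}(B^{\sigma-1}_{2,1})$; in particular $V \in \mathcal C^1([0,T_1]\times\mathbb R^d)$ by the embedding $B^{\sigma-1}_{2,1}\hookrightarrow \mathcal C^0 \cap L^\infty$ applied to $V$ and $\partial_t V, \nabla V$. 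Transforming back through the diffeomorphisms $V \mapsto W \mapsto U$ (all smooth, hence preserving $B^\sigma_{2,1}$ membership by the composition lemma) gives the solution $U$ of the original system.

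Uniqueness is a direct difference estimate in $\widetilde{L}^\infty_t(B^{\sigma-1}_{2,1})$ between two solutions with the same data — the same commutator structure plus Gronwall, with no derivative loss since both solutions already sit in $B^\sigma_{2,1}$. For the blow-up criterion: the lower bound $T^* \ge C_0/\|U_0-\bar U\|_{B^\sigma_{2,1}}$ is read off from the Gronwall estimate above by tracking the dependence of $T_1$ on the data norm. For the continuation criterion, suppose $\int_0^{T^*}\|\nabla U\|_{L^\infty}\,dt < \infty$; then in the energy estimate the factor multiplying $\|v\|_{B^\sigma_{2,1}}$ can be replaced (using a refined commutator estimate and the logarithmic interpolation / Osgood-type argument standard in the critical Besov setting) by $\|\nabla U\|_{L^\infty}$ — this is the point where one exploits that the transport-equation a priori bound in $B^\sigma_{2,1}$ depends on the drift only through $\|\nabla(\text{drift})\|_{L^\infty}$, not the full $B^\sigma_{2,1}$ norm — so Gronwall gives a finite bound for $\|U-\bar U\|_{B^\sigma_{2,1}}$ up to $T^*$, contradicting blow-up; the converse is trivial since $\|\nabla U\|_{L^\infty} \lesssim \|U-\bar U\|_{B^\sigma_{2,1}}$. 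The main obstacle I anticipate is the bookkeeping in the uniform estimate in the Chemin–Lerner norm: one must apply the commutator and product estimates \emph{after} taking the $\ell^1$ sum over dyadic blocks, and the time-integration sits \emph{inside} the $\ell^1$ sum, so the standard Sobolev-space manipulations (where one freely swaps sum and integral) have to be redone carefully — this is precisely why the paper develops the homogeneous/inhomogeneous comparison (Proposition \ref{prop6.1}) and why the Chemin–Lerner framework, rather than plain $L^\rho_T(B^s_{p,r})$, is needed.
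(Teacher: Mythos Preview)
Your proposal is correct and follows essentially the same route as the paper's Section~\ref{sec:3}: reduce to the symmetric normal form \eqref{R-E5}, run a Picard iteration on the linearized problem with frozen coefficients (Lemma~\ref{lem3.1} and Proposition~\ref{prop3.1}), obtain uniform $\widetilde{L}^\infty_T(B^\sigma_{2,1})$ bounds via localized energy estimates and the commutator lemma (Proposition~\ref{prop2.4}), prove the sequence is Cauchy in the lower-order norm $\widetilde{L}^\infty_{T_1}(B^{\sigma-1}_{2,1})$, upgrade the limit via Fatou, and defer the blow-up criterion to \cite{I,C1}. One minor correction to your anticipation: the homogeneous/inhomogeneous comparison (Proposition~\ref{prop6.1} and Corollary~\ref{cor6.1}) is invoked only for the \emph{global} a priori estimates in Section~\ref{sec:4}, not for the local argument, which the paper carries out entirely in inhomogeneous spaces; also, the paper does not truncate the data in the iteration but instead pushes the Friedrichs regularization into the linear solver (Proposition~\ref{prop6.2} in the Appendix).
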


\begin{rem}
The local existence result of classical solutions holds true in the
framework of Chemin-Lerner's space with critical regularity, which
can be proved by the classical iteration argument with the help of
entropy notion, see Proposition \ref{prop3.1} for details. Let us
mention that the new general result can be regarded as an
improvement of the works of Iftimie and Chae \cite{I,C1}, which
enriches the classical local existence theory of Kato and Majda
\cite{K,M}.
\end{rem}

In small amplitude regime, with the aid of the [SK] stability
condition, we establish the global well-posedness of classical
solutions to the Cauchy problem (\ref{R-E1})-(\ref{R-E2}) in
critical spaces.

\begin{thm}\label{thm1.5}
Suppose the balance laws (\ref{R-E1}) admits an entropy defined as
Definition \ref{defn1.1} and the corresponding symmetric system
(\ref{R-E5}) satisfies the stability condition at $\bar{V}\in
\mathcal{M}$, where $\bar{V}$ is the constant state corresponding to
$\bar{U}$. There exists a positive constant $\delta_{0}$ such that
if
\begin{eqnarray*}
\|U_{0}-\bar{U}\|_{B^{\sigma}_{2,1}(\mathbb{R}^{d})}\leq \delta_{0},
\end{eqnarray*}
then the Cauchy problem (\ref{R-E1})-(\ref{R-E2}) has a unique
global solution $U\in \mathcal{C}^{1}(\mathbb{R}^{+}\times
\mathbb{R}^{d})$ satisfying
\begin{eqnarray*}
U-\bar{U}\in
\widetilde{\mathcal{C}}(B^{\sigma}_{2,1}(\mathbb{R}^{d}))\cap
\widetilde{\mathcal{C}}^1(B^{\sigma-1}_{2,1}(\mathbb{R}^{d})).
\end{eqnarray*}
Moreover, it holds that
\begin{eqnarray}
&&\|U-\bar{U}\|_{\widetilde{L}^\infty(B^{\sigma}_{2,1}(\mathbb{R}^{d}))}
+\mu_{0}\Big(\|(I-\mathcal{P})U\|_{\widetilde{L}^2(B^{\sigma}_{2,1}(\mathbb{R}^{d}))}
+\|\nabla
U\|_{\widetilde{L}^2(B^{\sigma-1}_{2,1}(\mathbb{R}^{d}))}\Big)
\nonumber\\&\leq&
C_{0}\|U_{0}-\bar{U}\|_{B^{\sigma}_{2,1}(\mathbb{R}^{d})},
\label{R-E8}
\end{eqnarray}
where $C_{0}, \mu_{0}$ are some positive constants, and
$\mathcal{P}$ is the orthogonal projection onto $\mathcal{M}$.
\end{thm}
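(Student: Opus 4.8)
The plan is to combine the local existence theorem (Theorem \ref{thm1.4}) with uniform \textit{a priori} estimates obtained via the Fourier frequency-localization (Littlewood-Paley) method in Chemin-Lerner's spaces, and then use a standard continuation argument. First I would set $V=V(U)$ via the normal-form change of variables of Section 1.1, so that the system becomes \eqref{R-E5}, and write $\bar{V}\in\mathcal{M}$ for the image of $\bar{U}$; put $Z=V-\bar{V}$. Since $B^{\sigma}_{2,1}(\mathbb{R}^d)$ with $\sigma=1+d/2$ is an algebra continuously embedded in $\mathcal{C}^1\cap L^\infty$, and $W\mapsto V$ is a smooth diffeomorphism fixing a neighborhood of $\bar{V}$, it suffices to prove the estimate \eqref{R-E8} for $Z$ in the Chemin-Lerner norms, with constants depending only on the entropy, the flux, and the [SK] data; composition estimates in Besov spaces then transfer everything back to $U-\bar{U}$. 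Local existence on some $[0,T_1]$ is already guaranteed by Theorem \ref{thm1.4}; the whole issue is to show that the solution does not blow up, i.e. that $\|U-\bar U\|_{B^\sigma_{2,1}}$ stays bounded, which by the blow-up criterion in Theorem \ref{thm1.4} reduces to controlling $\int_0^{T}\|\nabla U\|_{L^\infty}\,dt$, and this in turn is controlled by $\|\nabla U\|_{\widetilde L^1_T(B^{\sigma-1}_{2,1})}\lesssim\|\nabla U\|_{\widetilde L^2_T(B^{\sigma-1}_{2,1})}$ on finite intervals, so the desired \textit{a priori} bound \eqref{R-E8} closes the argument.

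The core is the frequency-localized energy estimate. Apply the dyadic block $\Delta_q$ to \eqref{R-E5}, written in the quasilinear form $\tilde A^0(V)\partial_t Z+\sum_j \tilde A^j(V)\partial_{x_j}Z+LZ=\tilde r(V)$ after using Corollary \ref{cor1.1}; commute $\Delta_q$ past the variable coefficients to produce commutator terms $[\tilde A^j(V),\Delta_q]\partial_{x_j}Z$ and $[\tilde A^0(V),\Delta_q]\partial_t Z$. Taking the $L^2$ inner product of the localized equation with $\tilde A^0 \Delta_q Z$ (or with $\Delta_q Z$ after a symmetrization), the symmetry of $\tilde A^0,\tilde A^j$ makes the principal hyperbolic part contribute only lower-order terms $\int (\partial_t \tilde A^0 + \sum_j\partial_{x_j}\tilde A^j)|\Delta_q Z|^2$, bounded by $\|\nabla V\|_{L^\infty}\|\Delta_q Z\|_{L^2}^2$, while the term $\langle L\Delta_q Z,\Delta_q Z\rangle$ gives, by Corollary \ref{cor1.1} and the structure of $L$, a dissipation $\gtrsim \|\Delta_q (I-\mathcal P)Z\|_{L^2}^2$ on the non-equilibrium components only. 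To recover dissipation on $\nabla Z$ (all components) at frequencies $|\xi|\sim 2^q$, I would introduce the compensating function $\tilde K(\omega)$ from the last characterization in Theorem \ref{thm1.3}: form the mixed quantity $\mathrm{Im}\,2^q\langle \tilde K(D/|D|)\tilde A^0\Delta_q Z,\Delta_q Z\rangle$ (a Fourier multiplier of order zero), differentiate it in time along the equation, and use properties (i)–(iii) of $\tilde K$ — skew-symmetry of $\tilde K\tilde A^0$ kills the time-derivative-of-$\tilde A^0$ obstruction and positivity of $[\tilde K A]'+L$ produces $\gtrsim 2^{2q}\|\Delta_q Z\|_{L^2}^2$. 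Building the Lyapunov functional $\mathcal L_q(t)=\|\sqrt{\tilde A^0}\Delta_q Z\|_{L^2}^2 + \kappa\, 2^{-q}\mathrm{Im}\langle\tilde K \tilde A^0\Delta_q Z,2^q\Delta_q Z\rangle$ with $\kappa$ small and using $|\xi|^2/(1+|\xi|^2)\sim\min(2^{2q},1)$, I obtain $\frac{d}{dt}\mathcal L_q + c\,\frac{2^{2q}}{1+2^{2q}}\mathcal L_q \lesssim \big(\text{commutators} + \|\tilde r\|\big)\|\Delta_q Z\|_{L^2}$, and $\mathcal L_q\sim\|\Delta_q Z\|_{L^2}^2$.

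Next I would integrate in time, take square roots, multiply by $2^{q\sigma}$ and sum over $q\ge -1$ in $\ell^1$ — this is exactly where Chemin-Lerner's spaces are essential, since the $\ell^1(L^\infty_t)$ and $\ell^1(L^2_t)$ summations of the block norms are performed \emph{outside} the time integral, so the low-frequency block $\Delta_{-1}(\mathcal P Z)$, for which no dissipation is available (the factor $\min(2^{2q},1)$ degenerates), is still controlled by its initial value and the forcing without needing its own dissipation. The commutator terms $[\tilde A^j(V),\Delta_q]\partial_{x_j}Z$ are estimated by the standard commutator lemma in Besov spaces: $2^{q(\sigma-?)}\|[\tilde A^j(V),\Delta_q]\partial_x Z\|_{L^2}$ summed is $\lesssim \|\nabla V\|_{B^{\sigma-1}_{2,1}}\|Z\|_{B^\sigma_{2,1}}\lesssim \|\nabla V\|_{\widetilde L^2_T(B^{\sigma-1}_{2,1})}\|Z\|_{\widetilde L^\infty_T(B^\sigma_{2,1})}$ after Hölder in time; the nonlinear coefficient differences $\tilde A^j(V)-\tilde A^j(\bar V)$ and the remainder $\tilde r(V)$ are handled by Besov composition estimates and the bound $|\tilde r(V)|\lesssim|Z||(I-\mathcal P)Z|$ from Corollary \ref{cor1.1}, giving quadratic terms such as $\|(I-\mathcal P)Z\|_{\widetilde L^2_T(B^\sigma_{2,1})}\|Z\|_{\widetilde L^\infty_T(B^\sigma_{2,1})}$ that are absorbed by the left-hand dissipation when $\delta_0$ is small. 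Denoting $X(T)=\|Z\|_{\widetilde L^\infty_T(B^\sigma_{2,1})}+\mu_0(\|(I-\mathcal P)Z\|_{\widetilde L^2_T(B^\sigma_{2,1})}+\|\nabla Z\|_{\widetilde L^2_T(B^{\sigma-1}_{2,1})})$, the summed estimate reads $X(T)\le C_0\|Z_0\|_{B^\sigma_{2,1}} + C_1 X(T)^2$, and a standard bootstrap (continuity of $T\mapsto X(T)$, plus the local solution from Theorem \ref{thm1.4}) yields $X(T)\le 2C_0\delta_0$ for all $T$ once $\delta_0$ is chosen with $4C_0C_1\delta_0<1$; this gives both global existence and \eqref{R-E8}. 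The main obstacle, as the introduction already flags, is precisely the \emph{partial} dissipation: the naive dyadic energy estimate loses control of $\Delta_{-1}(\mathcal P Z)$, and the two devices that rescue the argument are (a) running the Shizuta-Kawashima compensating multiplier $\tilde K(D/|D|)$ blockwise to upgrade the degenerate dissipation to $\min(2^{2q},1)\mathcal L_q$, and (b) working in $\widetilde L^\rho_T(B^s_{2,1})$ rather than $L^\rho_T(B^s_{2,1})$ so that the $\ell^1$ summation tolerates the low-frequency block with no dissipation — establishing the embedding/comparison between these spaces (Proposition \ref{prop6.1}) is the technical heart that makes step-by-step block estimates legitimately sum to the stated global bound. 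Uniqueness follows from a similar (lower-order) energy estimate in $\widetilde L^\infty_T(B^{\sigma-1}_{2,1})$ on the difference of two solutions.
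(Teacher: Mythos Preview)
Your overall architecture (local existence from Theorem \ref{thm1.4}, frequency-localized energy with the Shizuta--Kawashima compensating multiplier $\tilde K(\omega)$, closure by bootstrap in Chemin--Lerner spaces) matches the paper's. But your concrete scheme --- a single Lyapunov functional $\mathcal L_q$ built directly on the \emph{inhomogeneous} blocks $\Delta_q$, $q\ge -1$ --- has a gap at $q=-1$. The ``lower-order'' term you identify, $\int(\partial_t\tilde A^0+\sum_j\partial_{x_j}\tilde A^j)|\Delta_q Z|^2$, is the problem: it is bounded by $(\|\nabla V\|_{L^\infty}+\|(I-\mathcal P)V\|_{L^\infty})\|\Delta_q Z\|_{L^2}^2$, and after time integration this produces a factor $\|\Delta_{-1}Z\|_{L^2_T(L^2)}$ (or $\|\nabla V\|_{L^1_T(L^\infty)}$), neither of which is controlled by your $X(T)$. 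For $q\ge 0$ you can absorb this term into the dissipation (or use Bernstein $\|\Delta_q Z\|\sim 2^{-q}\|\nabla\Delta_q Z\|$), but the block $\Delta_{-1}$ contains $\xi=0$, so neither absorption nor Bernstein is available and your inequality $X(T)\le C_0\|Z_0\|+C_1X(T)^2$ does not close. Your remark that ``$\min(2^{2q},1)$ degenerates'' at $q=-1$ is also not the right diagnosis: $2^{-2}$ is not small, but the pointwise rate $|\xi|^2/(1+|\xi|^2)$ vanishes at $\xi=0$ inside the support of $\Delta_{-1}$.

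The paper's remedy is precisely to avoid this trap by splitting the a priori estimate into three pieces. First (Lemma \ref{lem4.1}) an \emph{unlocalized} $L^2$ bound via the relative entropy $\hat\eta(U)$: the entropy--entropy-flux identity \eqref{R-E44} has an exact divergence, so no $\mathrm{div}\,\mathbb A(V)$ term appears at all, and one gets $\|V-\bar V\|_{L^\infty_T(L^2)}+\|(I-\mathcal P)V\|_{L^2_T(L^2)}\le C\|V_0-\bar V\|_{L^2}$ cleanly. Second (Lemma \ref{lem4.2}) a localized estimate in the \emph{homogeneous} space $\widetilde L^\theta_T(\dot B^\sigma_{2,1})$, where the homogeneous Bernstein inequality $\|\dot\Delta_q V\|_{L^2}\sim 2^{-q}\|\nabla\dot\Delta_q V\|_{L^2}$ is valid for \emph{every} $q\in\mathbb Z$ and converts the $\mathrm{div}\,\mathbb A$ term into $\|\nabla V\|_{\widetilde L^2_T(\dot B^{\sigma-1}_{2,1})}$; see \eqref{R-E51}. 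Third (Lemma \ref{lem4.3}) the compensating-function argument for $\|\nabla V\|_{\widetilde L^2_T(B^{\sigma-1}_{2,1})}$. The role of Proposition \ref{prop6.1}/Corollary \ref{cor6.1} is not the comparison $\widetilde L^\rho_T(B^s)$ versus $L^\rho_T(B^s)$ you describe (that is Remark \ref{rem2.2}), but the identity $\widetilde L^\theta_T(B^s_{p,r})=L^\theta_T(L^p)\cap\widetilde L^\theta_T(\dot B^s_{p,r})$ for $\theta\ge r$, which is exactly what lets one recombine Lemma \ref{lem4.1} (the $L^\theta_T(L^2)$ piece) and Lemma \ref{lem4.2} (the homogeneous $\dot B^\sigma$ piece) into the inhomogeneous bound \eqref{R-E57}. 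In short, the missing idea in your sketch is the $L^2$/$\dot B^\sigma$ splitting with the entropy handling the low-frequency $L^2$ level.
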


\begin{rem}
The proof of Theorem \ref{thm1.5} relies on a crucial \textit{a
priori} estimate (Proposition \ref{prop4.1}) and the standard
continuation argument. The \textit{a priori} estimate can be done in
three steps, which is derived by Fourier frequency-localization
argument, rather than the classical energy approach as in \cite{Y}.
The first step is the basic entropy variable estimate, which leads
to the $L^2$-estimate exhibiting the dissipation rate of
$(I-\mathcal{P})U$. To take account of it, the next step is to
estimate $(I-\mathcal{P})U$ in homogeneous Chemin-Lerner's spaces
with higher space derivatives. The last step is to capture the
dissipation rate of $\nabla U$ in Fourier space, due to the
important skew-symmetry condition in Theorem \ref{thm1.3}. To
conclude, the \textit{a priori} estimate is followed by Corollary
\ref{cor6.1}.
\end{rem}

\begin{rem}
In comparison with the previous efforts in \cite{HN,KY,Y}, Theorem
\ref{thm1.5} exhibits the optimal critial regularity of the global
existence of classical solutions, which can be regarded as a
supplement to the existence theory of hyperbolic problems. On the
other hand, we see that Theorem \ref{thm1.5} is applicable to many
concrete partially dissipative balance laws, for instance, the
compressible Euler equation with damping in Sect.~\ref{sec:5}.
However, let us mention that Theorem \ref{thm1.5} was obtained by
assuming all the time the [SK] condition. As a matter of fact, this
condition is not satisfied by all physical models, such as the
equations of gas dynamics in thermal nonequilibrium (see \cite{Z}).
It would be interesting to weaken the condition while preserving the
global existence in critical spaces. This issue is under current
consideration.
\end{rem}

As a direct consequence of Theorem \ref{thm1.5}, we can see the
large-time asymptotic behavior of global solutions near the
equilibrium $\bar{U}$ in some Besov spaces.
\begin{cor}\label{cor1.2}
Let $U$ be the solution in Theorem \ref{thm1.5}. Then
$$\|\nabla \mathcal{P}U(\cdot,t)\|_{B^{\sigma-1-\varepsilon}_{2,1}(\mathbb{R}^{d})}\rightarrow
0,$$

$$\|\mathcal{P}U(\cdot,t)-\bar{U}\|_{B^{\sigma-1-\varepsilon}_{p,2}(\mathbb{R}^{d})}\rightarrow 0 \ \
\Big(p=\frac{2d}{d-2},\ d>2\Big),$$ and
$$\|(I-\mathcal{P})U(\cdot,t)\|_{B^{\sigma-\varepsilon}_{2,1}(\mathbb{R}^{d})}\rightarrow
0$$ for any  $\varepsilon>0$, as $t\rightarrow +\infty$.
\end{cor}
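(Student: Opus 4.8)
The plan is to derive the decay statements directly from the global uniform bound \eqref{R-E8} together with an interpolation argument in Chemin--Lerner spaces. The key observation is that \eqref{R-E8} gives, in particular, that $(I-\mathcal{P})U$ lies in $\widetilde{L}^2(B^{\sigma}_{2,1})$ and $\nabla U$ lies in $\widetilde{L}^2(B^{\sigma-1}_{2,1})$ globally in time, while $U-\bar U$ stays bounded in $\widetilde{L}^\infty(B^{\sigma}_{2,1})$. From the embedding $\widetilde{L}^2_T(B^s_{2,1})\hookrightarrow L^2_T(B^s_{2,1})$ (this is one direction of Proposition \ref{prop6.1}-type comparisons, and in any case follows from Minkowski's inequality since $r=1\le 2$), the quantities $\|\nabla U(\cdot,t)\|_{B^{\sigma-1}_{2,1}}$ and $\|(I-\mathcal{P})U(\cdot,t)\|_{B^{\sigma}_{2,1}}$ are square-integrable in $t$ on $\mathbb{R}^+$. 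Hence there is a sequence $t_n\to\infty$ along which these norms tend to $0$; to upgrade this to a genuine limit one uses that $U-\bar U\in\widetilde{\mathcal C}^1(B^{\sigma-1}_{2,1})$ from Theorem \ref{thm1.5}, so $\partial_t U\in\widetilde{\mathcal C}(B^{\sigma-1}_{2,1})$ is uniformly bounded, which makes $t\mapsto \|(I-\mathcal{P})U(\cdot,t)\|_{B^{\sigma-1}_{2,1}}$ and $t\mapsto\|\nabla U(\cdot,t)\|_{B^{\sigma-2}_{2,1}}$ uniformly continuous; a square-integrable uniformly continuous nonnegative function must vanish at infinity.

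To pass from these ``one-derivative-lower'' statements to the ones claimed, which are at level $\sigma-1-\varepsilon$ and $\sigma-\varepsilon$ respectively, I would interpolate: for any $\varepsilon>0$, the Besov norm $\|\cdot\|_{B^{s-\varepsilon}_{2,1}}$ is controlled by a small power of $\|\cdot\|_{B^{s-1}_{2,1}}$ times a large power of $\|\cdot\|_{B^{s}_{2,1}}$ (standard logarithmic/real interpolation between $B^{s-1}_{2,1}$ and $B^s_{2,1}$, valid since $s-\varepsilon$ lies strictly between $s-1$ and $s$ once $\varepsilon<1$, and for $\varepsilon\ge1$ the claim is weaker still). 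Since the top-order norm $\|U-\bar U(\cdot,t)\|_{B^{\sigma}_{2,1}}$ stays bounded by \eqref{R-E8} and the lower-order norm goes to zero, the interpolated norm goes to zero. This handles the first and third displayed limits: apply it with $s=\sigma-1$ to $\nabla\mathcal{P}U$ (using $\|\nabla\mathcal{P}U\|_{B^{\sigma-1-\varepsilon}_{2,1}}\le \|\nabla U\|_{B^{\sigma-1-\varepsilon}_{2,1}}$ since $\mathcal{P}$ is a bounded Fourier multiplier, indeed a constant projection) and with $s=\sigma$ to $(I-\mathcal{P})U$.

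For the middle statement involving $\|\mathcal{P}U(\cdot,t)-\bar U\|_{B^{\sigma-1-\varepsilon}_{p,2}}$ with $p=\tfrac{2d}{d-2}$, $d>2$, I would combine the previous step with the Sobolev-type embedding $B^{s}_{2,2}(\mathbb{R}^d)\hookrightarrow B^{s-d(\frac12-\frac1p)}_{p,2}(\mathbb{R}^d)$; here $d(\frac12-\frac1p)=1$ for this particular $p$, so $B^{\sigma-\varepsilon}_{2,2}\hookrightarrow B^{\sigma-1-\varepsilon}_{p,2}$. Thus it suffices to show $\|\mathcal{P}U(\cdot,t)-\bar U\|_{B^{\sigma-\varepsilon}_{2,2}}\to0$, and since $B^{\sigma-\varepsilon/2}_{2,1}\hookrightarrow B^{\sigma-\varepsilon}_{2,2}$ this follows from the interpolation argument above applied to $\mathcal{P}U-\bar U = (U-\bar U)-(I-\mathcal{P})U$, both of whose pieces are covered (the first by its boundedness in $B^\sigma_{2,1}$ and decay at a lower level obtainable from $\partial_tU\in\widetilde{\mathcal C}(B^{\sigma-1}_{2,1})$ and the equation, the second directly from the third limit). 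The main obstacle I anticipate is the upgrade from decay along a subsequence to decay of the full limit: one must be careful that the quantity whose square-integrability in time is known (at level $\sigma$ or $\sigma-1$) and the quantity whose uniform continuity in time is known (at level $\sigma-1$ or $\sigma-2$, via the bound on $\partial_tU$) are compatible, i.e.\ that after interpolating down by $\varepsilon$ we land strictly below the top regularity so that both an $L^2_t$ bound and a uniform-continuity estimate are available simultaneously; this forces the slight loss $\varepsilon>0$ in the statement and is precisely why the corollary cannot be stated at the endpoint $\varepsilon=0$.
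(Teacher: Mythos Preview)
Your argument for the first and third limits is correct and matches the paper's approach: both use that square-integrability in time of a quantity, together with uniform-in-time control on its time derivative at one level lower, forces decay at that lower level, and then interpolation against the uniform $B^\sigma_{2,1}$ bound yields decay at $B^{\sigma-1-\varepsilon}_{2,1}$ (resp.\ $B^{\sigma-\varepsilon}_{2,1}$). The paper phrases the first step slightly differently---setting $\mathcal H(t)=\|\nabla\mathcal PU(\cdot,t)\|^2_{B^{\sigma-2}_{2,1}}$ and observing that both $\mathcal H$ and an $L^1$ upper bound for $\mathcal H'$ are integrable, so $\mathcal H(t)\to 0$---but this is equivalent to your ``square-integrable plus uniformly continuous implies vanishing at infinity'' observation.

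There is, however, a genuine gap in your treatment of the middle limit. You reduce to showing $\|\mathcal PU(\cdot,t)-\bar U\|_{B^{\sigma-\varepsilon}_{2,2}}\to 0$ via the inhomogeneous embedding $B^{\sigma-\varepsilon}_{2,2}\hookrightarrow B^{\sigma-1-\varepsilon}_{p,2}$, and then claim this follows by interpolation applied to $\mathcal PU-\bar U=(U-\bar U)-(I-\mathcal P)U$. The second piece is fine, but for the first you assert ``decay at a lower level obtainable from $\partial_t U\in\widetilde{\mathcal C}(B^{\sigma-1}_{2,1})$ and the equation''. This is not available: $\mathcal PU-\bar U$ corresponds to the \emph{undamped} components of the system, and with data only in $B^\sigma_{2,1}$ (no $L^1$ hypothesis) there is no mechanism forcing the low-frequency block $\|\Delta_{-1}(\mathcal PU-\bar U)\|_{L^2}$---which is part of any inhomogeneous $B^{\sigma-\varepsilon}_{2,2}$ norm---to decay. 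Boundedness of $\partial_tU$, or even $\partial_tU\in L^2_t(B^{\sigma-1}_{2,1})$, says nothing about the large-time behaviour of $\|U-\bar U\|_{L^2}$ itself. The paper sidesteps this obstruction by invoking the Gagliardo--Nirenberg--Sobolev inequality $\|f\|_{L^{p}}\le C\|\nabla f\|_{L^2}$ (valid for $p=2d/(d-2)$, $d>2$), which controls $\|\mathcal PU-\bar U\|_{B^{\sigma-1-\varepsilon}_{p,2}}$ directly by $\|\nabla\mathcal PU\|_{B^{\sigma-1-\varepsilon}_{2,2}}$; the latter does decay by the first limit, and the $L^2$ norm of $\mathcal PU-\bar U$ is never needed.
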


The rest of this paper unfolds as follows. In Sect.~\ref{sec:2}, we
introduce the Littlewood-Paley decomposition and recall the
definitions and some useful conclusions in Besov spaces and
Chemin-Lerner's spaces. In Sect.~\ref{sec:3}, we give the local
existence of classical solutions in Chemin-Lerner's spaces with
critical regularity. Sect.~\ref{sec:4} is devoted to the proof of
\textit{a priori} estimates. In Sect.~\ref{sec:5}, we present some
applications about our new results. The paper ends with an Appendix
(Sect.~\ref{sec:6}), where we develop the elementary fact that
indicates the relation between homogeneous and inhomogeneous
Chemin-Lerner's spaces, and establish the existence for linear
symmetric system, which is used to prove the local existence for the
quasilinear symmetric system (\ref{R-E5}).

\textbf{Notations}. Throughout the paper, we use
$\langle\cdot,\cdot\rangle$ to denote the standard inner product in
the real $\mathbb{R}^{N}$ or complex $\mathbb{C}^{N}$. $C>0$ stands
for a generic constant, which might be different in each context.
The notation $f\approx g$ means that $f\leq Cg$ and $g\leq Cf$.
Denote by $\mathcal{C}([0,T],X)$ (resp., $\mathcal{C}^{1}([0,T],X)$)
the space of continuous (resp., continuously differentiable)
functions on $[0,T]$ with values in a Banach space $X$. For
simplicity, the notation $\|(f,g)\|_{X}$ means $
\|f\|_{X}+\|g\|_{X}$ with $f,g\in X$. In addition, we omit the space
dependence, since all functional spaces (in $x$) are considered in
$\mathbb{R}^{d}$.

\section{Littlewood-Paley theory and functional spaces}\label{sec:2}
The proofs of most of the results presented in this paper require a
dyadic decomposition of Fourier variable, so we recall briefly the
Littlewood-Paley decomposition theory and functional spaces, such as
Besov spaces and Chemin-Lerner's spaces. The reader is also referred
to \cite{BCD} for details.

We start with the Fourier transform. The Fourier transform $\hat{f}$
of a $L^1$-function $f$ is given by
$$\mathcal{F}f=\int_{\mathbb{R}^{d}}f(x)e^{-2\pi x\cdot\xi}dx.$$ More
generally, the Fourier transform of any $f\in\mathcal{S}'$, the
space of tempered distributions, is given by
$$(\mathcal{F}f,g)=(f,\mathcal{F}g)$$ for any $g\in \mathcal{S}$, the Schwartz
class.

First, we fix some notation.
$$\mathcal{S}_{0}=\Big\{\phi\in\mathcal{S},\partial^{\alpha}\mathcal{F}f(0)=0, \forall \alpha \in \mathbb{N}^{d}\ \mbox{multi-index}\Big\}.$$
Its dual is given by
$$\mathcal{S}'_{0}=\mathcal{S}'/\mathbf{P},$$ where $\mathbf{P}$
is the space of polynomials.

We now introduce a dyadic partition of $\mathbb{R}^{d}$. We choose
$\phi_{0}\in \mathcal{S}$ such that $\phi_{0}$ is even,
$$\mathrm{supp}\phi_{0}:=A_{0}=\Big\{\xi\in\mathbb{R}^{d}:\frac{3}{4}\leq|\xi|\leq\frac{8}{3}\Big\},\  \mbox{and}\ \ \phi_{0}>0\ \ \mbox{on}\ \ A_{0}.$$
Set $A_{q}=2^{q}A_{0}$ for $q\in\mathbb{Z}$. Furthermore, we define
$$\phi_{q}(\xi)=\phi_{0}(2^{-q}\xi)$$ and define $\Phi_{q}\in
\mathcal{S}$ by
$$\mathcal{F}\Phi_{q}(\xi)=\frac{\phi_{q}(\xi)}{\sum_{q\in \mathbb{Z}}\phi_{q}(\xi)}.$$
It follows that both $\mathcal{F}\Phi_{q}(\xi)$ and $\Phi_{q}$ are
even and satisfy the following properties:
$$\mathcal{F}\Phi_{q}(\xi)=\mathcal{F}\Phi_{0}(2^{-q}\xi),\ \ \ \mathrm{supp}\ \mathcal{F}\Phi_{q}(\xi)\subset A_{q},\ \ \ \Phi_{q}(x)=2^{qd}\Phi_{0}(2^{q}x)$$
and
$$\sum_{q=-\infty}^{\infty}\mathcal{F}\Phi_{q}(\xi)=\cases{1,\ \ \ \mbox{if}\ \ \xi\in\mathbb{R}^{d}\setminus \{0\},
\cr 0, \ \ \ \mbox{if}\ \ \xi=0.}
$$
As a consequence, for any $f\in S'_{0},$ we have
$$\sum_{q=-\infty}^{\infty}\Phi_{q}\ast f=f.$$

To define the homogeneous Besov spaces, we set
$$\dot{\Delta}_{q}f=\Phi_{q}\ast f,\ \ \ \ q=0,\pm1,\pm2,...$$

\begin{defn}\label{defn2.1}
For $s\in \mathbb{R}$ and $1\leq p,r\leq\infty,$ the homogeneous
Besov spaces $\dot{B}^{s}_{p,r}$ is defined by
$$\dot{B}^{s}_{p,r}=\{f\in S'_{0}:\|f\|_{\dot{B}^{s}_{p,r}}<\infty\},$$
where
$$\|f\|_{\dot{B}^{s}_{p,r}}
=\cases{\Big(\sum_{q\in\mathbb{Z}}(2^{qs}\|\dot{\Delta}_{q}f\|_{L^p})^{r}\Big)^{1/r},\
\ r<\infty, \cr \sup_{q\in\mathbb{Z}}
2^{qs}\|\dot{\Delta}_{q}f\|_{L^p},\ \ r=\infty.} $$\end{defn}

To define the inhomogeneous Besov spaces, we set $\Psi\in
\mathcal{C}_{0}^{\infty}(\mathbb{R}^{d})$ be even and satisfy
$$\mathcal{F}\Psi(\xi)=1-\sum_{q=0}^{\infty}\mathcal{F}\Phi_{q}(\xi).$$
It is clear that for any $f\in S'_{0}$, yields
$$\Psi*f+\sum_{q=0}^{\infty}\Phi_{q}\ast f=f.$$
We further set
$$\Delta_{q}f=\cases{0,\ \ \ \ \ \ \ \, \ j\leq-2,\cr
\Psi*f,\ \ \ j=-1,\cr \Phi_{q}\ast f, \ \ j=0,1,2,...}$$

\begin{defn}\label{defn2.2}
For $s\in \mathbb{R}$ and $1\leq p,r\leq\infty,$ the inhomogeneous
Besov spaces $B^{s}_{p,r}$ is defined by
$$B^{s}_{p,r}=\{f\in S':\|f\|_{B^{s}_{p,r}}<\infty\},$$
where
$$\|f\|_{B^{s}_{p,r}}
=\cases{\Big(\sum_{q=-1}^{\infty}(2^{qs}\|\dot{\Delta}_{q}f\|_{L^p})^{r}\Big)^{1/r},\
\ r<\infty, \cr \sup_{q\geq-1} 2^{qs}\|\dot{\Delta}_{q}f\|_{L^p},\ \
r=\infty.}$$
\end{defn}

Let us point out that the definitions of $\dot{B}^{s}_{p,r}$ and
$B^{s}_{p,r}$ does not depend on the choice of the Littlewood-Paley
decomposition.  Now, we state some basic conclusions, which will be
used in subsequent analysis.
\begin{lem}(Bernstein inequality)\label{lem2.1}
Let $k\in\mathbb{N}$ and $0<R_{1}<R_{2}$. There exists a constant
$C$, depending only on $R_{1},R_{2}$ and $d$, such that for all
$1\leq a\leq b\leq\infty$ and $f\in L^{a}$,
$$
\mathrm{Supp}\mathcal{F}f\subset \{\xi\in \mathbb{R}^{d}: |\xi|\leq
R_{1}\lambda\}\Rightarrow\sup_{|\alpha|=k}\|\partial^{\alpha}f\|_{L^{b}}
\leq C^{k+1}\lambda^{k+d(\frac{1}{a}-\frac{1}{b})}\|f\|_{L^{a}};
$$
$$
\mathrm{Supp}\mathcal{F}f\subset \{\xi\in \mathbb{R}^{d}:
R_{1}\lambda\leq|\xi|\leq R_{2}\lambda\} \Rightarrow
C^{-k-1}\lambda^{k}\|f\|_{L^{a}}\leq
\sup_{|\alpha|=k}\|\partial^{\alpha}f\|_{L^{a}}\leq
C^{k+1}\lambda^{k}\|f\|_{L^{a}}.
$$
\end{lem}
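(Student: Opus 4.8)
The statement to prove is the Bernstein inequality (Lemma \ref{lem2.1}).

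\textbf{Proof proposal.}
The plan is to reduce both assertions to properties of a single fixed Schwartz function obtained by rescaling, exploiting that the Fourier support lies in a ball (or an annulus) of radius proportional to $\lambda$. First I would treat the first inequality. Choose once and for all a function $\chi\in\mathcal{S}(\mathbb{R}^{d})$ whose Fourier transform equals $1$ on the ball $\{|\xi|\leq R_{1}\}$ and is supported in a slightly larger ball; then for any $f$ with $\mathrm{Supp}\,\mathcal{F}f\subset\{|\xi|\leq R_{1}\lambda\}$ one has the reproducing identity $f=h_{\lambda}\ast f$, where $h_{\lambda}(x)=\lambda^{d}\chi(\lambda x)$ (this is just $\widehat{h_{\lambda}}(\xi)=\widehat{\chi}(\xi/\lambda)=1$ on the support of $\mathcal{F}f$). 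Differentiating, $\partial^{\alpha}f=(\partial^{\alpha}h_{\lambda})\ast f$, and $\partial^{\alpha}h_{\lambda}(x)=\lambda^{d+k}(\partial^{\alpha}\chi)(\lambda x)$ for $|\alpha|=k$. Now apply Young's convolution inequality with exponents $\tfrac1b+1=\tfrac1a+\tfrac1c$, i.e. $c$ defined by $\tfrac1c=1-(\tfrac1a-\tfrac1b)$, to get $\|\partial^{\alpha}f\|_{L^{b}}\leq\|\partial^{\alpha}h_{\lambda}\|_{L^{c}}\|f\|_{L^{a}}$. A change of variables gives $\|\partial^{\alpha}h_{\lambda}\|_{L^{c}}=\lambda^{k+d(1-\frac1c)}\|\partial^{\alpha}\chi\|_{L^{c}}=\lambda^{k+d(\frac1a-\frac1b)}\|\partial^{\alpha}\chi\|_{L^{c}}$, and taking the supremum over $|\alpha|=k$ of the (finitely many, uniformly bounded in $c\in[1,\infty]$) constants $\|\partial^{\alpha}\chi\|_{L^{c}}$ produces a bound of the form $C^{k+1}$; this yields the first claim.

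For the second inequality I would argue similarly but on an annulus. The upper bound is immediate from the first inequality with $a=b$, since an annular support is in particular contained in a ball of radius $R_{2}\lambda$. For the lower bound, pick $\widetilde{\chi}\in\mathcal{S}$ with $\widehat{\widetilde\chi}$ supported in an annulus slightly larger than $\{R_{1}\leq|\xi|\leq R_{2}\}$ and equal to $1$ there; on this annulus define $\widehat{g}(\xi)=\widehat{\widetilde\chi}(\xi)\,\xi^{\alpha}|\xi|^{-2k}$ summed appropriately, or more cleanly, construct a Fourier multiplier $m(\xi)$, homogeneous-like of degree $-k$ on the annulus, such that the identity $f=\sum_{|\alpha|=k} c_{\alpha}\,m_{\alpha}\ast(\partial^{\alpha}f)$ holds for $f$ with spectrum in the annulus at scale $\lambda$; scaling shows $\|m_{\alpha,\lambda}\ast(\partial^{\alpha}f)\|_{L^{a}}\leq \lambda^{-k}\|m_{\alpha}\|_{L^{1}}\|\partial^{\alpha}f\|_{L^{a}}$ by Young with $c=1$, whence $\lambda^{k}\|f\|_{L^{a}}\leq C\sup_{|\alpha|=k}\|\partial^{\alpha}f\|_{L^{a}}$, and tracking the combinatorial constants again gives the $C^{-k-1}$ form.

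The routine parts are the two changes of variables and Young's inequality; the only genuinely delicate point is getting the constants in the exponential form $C^{k+1}$ (resp.\ $C^{-k-1}$) uniformly in $k$ and in $a,b$. This requires choosing the auxiliary cutoffs $\chi,\widetilde\chi$ independently of $k$ and controlling $\sup_{|\alpha|=k}\|\partial^{\alpha}\chi\|_{L^{c}}$ by $C^{k+1}$; since $\chi$ is a fixed Schwartz function, each derivative $\partial^{\alpha}\chi$ is Schwartz with all $L^{c}$ norms ($c\in[1,\infty]$) bounded by $C\sup_{c}\|\partial^{\alpha}\chi\|_{L^{c}}$, and standard estimates on derivatives of Schwartz functions of fixed type give the geometric growth in $|\alpha|$. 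I expect this constant bookkeeping — rather than any analytic subtlety — to be the main obstacle, and in most treatments one simply records that $C$ depends only on $R_{1},R_{2},d$ and absorbs the $k$-dependence into $C^{k+1}$; I would follow that route.
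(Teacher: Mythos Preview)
The paper does not prove Lemma~\ref{lem2.1}; it is quoted as a standard fact from the Littlewood--Paley toolbox (see the reference \cite{BCD} cited at the start of Section~\ref{sec:2}). Your proposal is precisely the classical textbook argument---reproducing kernel from a fixed cutoff, Young's inequality, and rescaling for the ball case; a Fourier-multiplier inverse on the annulus for the lower bound---so there is nothing to compare: your route \emph{is} the standard one the paper implicitly defers to.

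One small comment on execution: your sketch of the lower bound is a little vague (``summed appropriately, or more cleanly\ldots''). The clean way to write it is to note that $\sum_{|\alpha|=k}\xi^{2\alpha}$ is a positive homogeneous polynomial of degree $2k$, hence bounded below on the unit annulus; set $\widehat{g_{\alpha}}(\xi)=\widetilde{\chi}(\xi)\,(-i\xi)^{\alpha}\big/\sum_{|\beta|=k}\xi^{2\beta}$, so that $\sum_{|\alpha|=k}(i\xi)^{\alpha}\widehat{g_{\alpha}}(\xi)=1$ on the reference annulus, and then rescale. This makes the identity $f=\sum_{|\alpha|=k} g_{\alpha,\lambda}\ast\partial^{\alpha}f$ exact and the $\lambda^{-k}$ factor transparent. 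As you correctly flag, the only bookkeeping issue is the $C^{k+1}$ form of the constant; this is handled in \cite{BCD} by choosing the cutoff once and bounding $\sup_{c\in[1,\infty]}\|\partial^{\alpha}\chi\|_{L^{c}}$ via the fixed Schwartz seminorms of $\chi$, which indeed grow at most geometrically in $|\alpha|$ for a suitably chosen $\chi$.
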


As a direct corollary of the above inequality, we have
\begin{rem}\label{rem2.1} For all
multi-index $\alpha$, it holds that
$$\frac{1}{C}\|f\|_{\dot{B}^{s + |\alpha|}_{p,
r}}\leq\|\partial^\alpha f\|_{\dot{B}^s_{p, r}}\leq
C\|f\|_{\dot{B}^{s + |\alpha|}_{p, r}};$$
$$
\|\partial^\alpha f\|_{B^s_{p, r}}\leq C\|f\|_{B^{s + |\alpha|}_{p,
r}}.
$$
\end{rem}

The second one is the embedding properties in Besov spaces.
\begin{lem}\label{lem2.2} Let $s\in \mathbb{R}$ and $1\leq
p,r\leq\infty,$ then
\begin{itemize}
\item[(1)]If $s>0$, then $B^{\tilde{s}}_{p,\tilde{r}}=L^{p}\cap B^{\tilde{s}}_{p,\tilde{r}};$
\item[(2)]If $\tilde{s}\leq s$, then $B^{s}_{p,r}\hookrightarrow
B^{\tilde{s}}_{p,\tilde{r}}$. This inclusion relation is false for
the homogeneous Besov spaces;
\item[(3)]If $1\leq r\leq\tilde{r}\leq\infty$, then $\dot{B}^{s}_{p,r}\hookrightarrow
\dot{B}^{s}_{p,\tilde{r}}$ and $B^{s}_{p,r}\hookrightarrow
B^{s}_{p,\tilde{r}};$
\item[(4)]If $1\leq p\leq\tilde{p}\leq\infty$, then $\dot{B}^{s}_{p,r}\hookrightarrow \dot{B}^{s-d(\frac{1}{p}-\frac{1}{\tilde{p}})}_{\tilde{p},r}
$ and $B^{s}_{p,r}\hookrightarrow
B^{s-d(\frac{1}{p}-\frac{1}{\tilde{p}})}_{\tilde{p},r}$;
\item[(5)]$\dot{B}^{d/p}_{p,1}\hookrightarrow\mathcal{C}_{0},\ \ B^{d/p}_{p,1}\hookrightarrow\mathcal{C}_{0}(1\leq p<\infty);$
\end{itemize}
where $\mathcal{C}_{0}$ is the space of continuous bounded functions
which decay at infinity.
\end{lem}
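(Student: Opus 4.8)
All five assertions reduce to the blockwise Bernstein inequality of Lemma~\ref{lem2.1} combined with elementary estimates on the sequences $\big(2^{qs}\|\dot\Delta_q f\|_{L^p}\big)_q$ and $\big(2^{qs}\|\Delta_q f\|_{L^p}\big)_q$, so the plan is to organize the argument according to which set of dyadic indices is involved. Statement (3) is immediate from the monotonicity of the discrete norms, $\|(a_q)\|_{\ell^{\tilde r}}\le\|(a_q)\|_{\ell^r}$ for $r\le\tilde r$, applied with $a_q=2^{qs}\|\dot\Delta_q f\|_{L^p}$ in the homogeneous case and with $a_q=2^{qs}\|\Delta_q f\|_{L^p}$ in the inhomogeneous one. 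For (4), the point is that $\dot\Delta_q f$ has Fourier support in the annulus $A_q$ of size $\sim 2^q$; the second inequality in Lemma~\ref{lem2.1} with $a=p$, $b=\tilde p$, $\lambda\sim 2^q$, $k=0$ gives $\|\dot\Delta_q f\|_{L^{\tilde p}}\le C\,2^{qd(1/p-1/\tilde p)}\|\dot\Delta_q f\|_{L^p}$, and multiplying by $2^{q(s-d(1/p-1/\tilde p))}$ and taking the $\ell^r$-norm in $q$ yields $\|f\|_{\dot B^{s-d(1/p-1/\tilde p)}_{\tilde p,r}}\le C\|f\|_{\dot B^{s}_{p,r}}$; in the inhomogeneous version the blocks with $q\ge0$ are treated identically, while $\Delta_{-1}f$ has Fourier support in a ball, so the first inequality in Lemma~\ref{lem2.1} produces the same power of $2$ up to a fixed constant.

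For (2) I would use that in the inhomogeneous space the index runs over $q\ge-1$, hence is bounded below. Since $\tilde s-s\le0$ one has $2^{q\tilde s}\|\Delta_q f\|_{L^p}=2^{q(\tilde s-s)}\cdot 2^{qs}\|\Delta_q f\|_{L^p}$ with $0<2^{q(\tilde s-s)}\le 2^{s-\tilde s}$ for $q\ge-1$, so when $\tilde s=s$ the claim is exactly (3), and when $\tilde s<s$ the sequence $\big(2^{q(\tilde s-s)}\big)_{q\ge-1}$ lies in $\ell^{\tilde r}$, whence $\|f\|_{B^{\tilde s}_{p,\tilde r}}\le\big\|(2^{q(\tilde s-s)})\big\|_{\ell^{\tilde r}}\sup_q 2^{qs}\|\Delta_q f\|_{L^p}\le C\|f\|_{B^{s}_{p,\infty}}\le C\|f\|_{B^{s}_{p,r}}$, the last step by (3). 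To see this has no homogeneous analogue I would exhibit an element of $\mathcal S'_0$ whose low-frequency blocks decay too slowly: take $\|\dot\Delta_q f\|_{L^p}\sim|q|^{-1}2^{-qs}$ for $q\le-2$ and $\dot\Delta_q f=0$ otherwise, so that $\big(2^{qs}\|\dot\Delta_q f\|_{L^p}\big)\sim(|q|^{-1})\in\ell^2$ gives $f\in\dot B^{s}_{p,2}$, whereas $2^{q\tilde s}\|\dot\Delta_q f\|_{L^p}\sim|q|^{-1}2^{q(\tilde s-s)}\to\infty$ as $q\to-\infty$, so $f\notin\dot B^{\tilde s}_{p,\tilde r}$ for any $\tilde r$; such $f$ is built as $\sum_{q\le-2}c_q\Phi_q$ with the $c_q$ normalized via $\|\Phi_q\|_{L^p}\approx 2^{qd(1-1/p)}$, and one checks it represents a genuine class in $\mathcal S'/\mathbf P$.

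For (5) I would write $f=\sum_q\Delta_q f$ and apply Lemma~\ref{lem2.1} with $a=p$, $b=\infty$ blockwise to get $\|\Delta_q f\|_{L^\infty}\le C\,2^{qd/p}\|\Delta_q f\|_{L^p}$; summing, $\sum_q\|\Delta_q f\|_{L^\infty}\le C\|f\|_{B^{d/p}_{p,1}}<\infty$, so the partial sums $\sum_{|q|\le Q}\Delta_q f$ converge uniformly, each one is continuous and---being a sum of band-limited $L^p$-functions with $p<\infty$---vanishes at infinity, whence $f\in\mathcal C_0$; the homogeneous case is identical with $q$ ranging over $\mathbb Z$. For (1), using $s>0$: the inclusion $L^p\cap\dot B^{s}_{p,r}\hookrightarrow B^{s}_{p,r}$ follows by splitting at $q=-1$, estimating $\|\Delta_{-1}f\|_{L^p}\le\|\Psi\|_{L^1}\|f\|_{L^p}$ and noting $\Delta_q f=\dot\Delta_q f$ for $q\ge0$; conversely $\|f\|_{L^p}\le\|\Delta_{-1}f\|_{L^p}+\sum_{q\ge0}2^{-qs}\big(2^{qs}\|\Delta_q f\|_{L^p}\big)\le C\|f\|_{B^{s}_{p,r}}$ by Hölder since $\sum_{q\ge0}2^{-qs}<\infty$, and for the homogeneous low-frequency blocks one uses $\mathcal F\Psi\equiv1$ on $A_q$ for $q\le-1$ to write $\dot\Delta_q f=\Phi_q*\Delta_{-1}f$, so that $\|\dot\Delta_q f\|_{L^p}\le C\|\Delta_{-1}f\|_{L^p}$ and $\sum_{q\le-1}\big(2^{qs}\|\dot\Delta_q f\|_{L^p}\big)^r<\infty$ again because $s>0$; adding the high-frequency part gives $\|f\|_{\dot B^{s}_{p,r}}\le C\|f\|_{B^{s}_{p,r}}$.

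The steps I expect to be delicate are the two involving the homogeneous low-frequency regime $q\to-\infty$: making the counterexample for the failure of (2) precise while verifying it genuinely defines an element of $\mathcal S'/\mathbf P$, and justifying in (1) the spectral-support identity $\dot\Delta_q f=\Phi_q*\Delta_{-1}f$ for $q\le-1$; everything else is a routine combination of Bernstein's inequality and $\ell^r$-sequence estimates.
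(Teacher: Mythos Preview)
The paper does not actually prove Lemma~\ref{lem2.2}: it is stated as background material with an implicit reference to \cite{BCD}, so there is no in-paper argument to compare your proposal against. Your proof is the standard one and is correct. The only places that warrant a word of care are exactly the two you flagged. In (1), the identity $\dot\Delta_q f=\Phi_q*\Delta_{-1}f$ is literally true only for $q\le-2$ (since $\mathcal F\Psi$ need not be identically $1$ on the annulus $A_{-1}$, which reaches up to $|\xi|=4/3$); for $q=-1$ one instead writes $\dot\Delta_{-1}f=\Phi_{-1}*(\Delta_{-1}f+\Delta_0 f)$, which still gives $\|\dot\Delta_{-1}f\|_{L^p}\le C\|f\|_{B^{s}_{p,r}}$ and the rest of the argument goes through unchanged. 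In the counterexample for (2), your construction $f=\sum_{q\le-2}c_q\Phi_q$ with $|c_q|\,\|\Phi_q\|_{L^p}\sim|q|^{-1}2^{-qs}$ does define a tempered distribution modulo polynomials because the partial sums converge in $\mathcal S'_0$ (each test function in $\mathcal S_0$ has Fourier transform vanishing to infinite order at the origin, which controls the pairing with the low-frequency tail); this is routine but worth stating explicitly.
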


The third one is the result of compactness in inhomogeneous Besov
spaces.
\begin{prop}\label{prop2.1}
Let $1\leq p,r\leq \infty,\ s\in \mathbb{R}$ and $\varepsilon>0$.
For all $\phi\in C_{c}^{\infty}$, the map $f\mapsto\phi f$ is
compact from $B^{s+\varepsilon}_{p,r}$ to $B^{s}_{p,r}$.
\end{prop}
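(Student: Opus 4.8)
The plan is to exhibit the multiplication operator $M_\phi\colon f\mapsto\phi f$ as a limit, in the operator norm of $\mathcal{L}(B^{s+\varepsilon}_{p,r},B^{s}_{p,r})$, of operators that are manifestly compact; since the compact operators form a closed subspace of $\mathcal{L}(X,Y)$ for $Y$ a Banach space (and every $B^{s}_{p,r}$ is a Banach space), this forces $M_\phi$ to be compact. Writing $S_Q:=\sum_{q\le Q-1}\Delta_q$ for the low--frequency cut--off, so that $\mathrm{Id}-S_Q=\sum_{q\ge Q}\Delta_q$, I would establish the two facts
$$\big\|M_\phi-\phi\,S_Q(\cdot)\big\|_{\mathcal{L}(B^{s+\varepsilon}_{p,r},B^{s}_{p,r})}\longrightarrow 0\quad(Q\to\infty)\qquad\mbox{and}\qquad f\mapsto\phi\,S_Qf\ \mbox{is compact for each fixed }Q.$$

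For the convergence, observe $M_\phi-\phi\,S_Q(\cdot)=\phi\circ(\mathrm{Id}-S_Q)$ and recall the standard fact (see \cite{BCD}) that multiplication by a fixed $\phi\in C^{\infty}_{c}$ is bounded on $B^{t}_{p,r}$ for every $t\in\mathbb{R}$. Then
$$\|\phi\,(\mathrm{Id}-S_Q)f\|_{B^{s}_{p,r}}\le C_\phi\,\|(\mathrm{Id}-S_Q)f\|_{B^{s}_{p,r}}=C_\phi\Big(\sum_{q\ge Q}\big(2^{qs}\|\Delta_qf\|_{L^p}\big)^{r}\Big)^{1/r}\le C_\phi\,2^{-Q\varepsilon}\,\|f\|_{B^{s+\varepsilon}_{p,r}},$$
the last step just factoring out $2^{-q\varepsilon}\le 2^{-Q\varepsilon}$ for $q\ge Q$. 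This is the only point where the strict gain $\varepsilon>0$ is used, and it makes the operator norm tend to $0$.

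For the compactness of $f\mapsto\phi\,S_Qf$ with $Q$ fixed, note that $S_Qf$ has Fourier support in a fixed ball $\{|\xi|\le R\,2^{Q}\}$, so Bernstein's inequality (Lemma \ref{lem2.1}) applied to $S_Qf$, together with the elementary bound $\|S_Qf\|_{L^p}\le C(Q,s)\|f\|_{B^{s}_{p,r}}\le C(Q,s)\|f\|_{B^{s+\varepsilon}_{p,r}}$, yields for every $k\in\mathbb{N}$ a bound $\|S_Qf\|_{C^{k}}\le C(k,Q)\|f\|_{B^{s+\varepsilon}_{p,r}}$. By the Leibniz rule and $\mathrm{supp}(\phi\,S_Qf)\subset K:=\mathrm{supp}\,\phi$, the set $\{\phi\,S_Qf:\|f\|_{B^{s+\varepsilon}_{p,r}}\le 1\}$ is therefore bounded in $C^{m+1}_{c}(K)$ for every $m$. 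Fixing an integer $m>\max\{s,0\}$, so that $C^{m}_{c}(K)\hookrightarrow B^{m}_{p,\infty}\hookrightarrow B^{s}_{p,r}$ continuously (the last embedding by Lemma \ref{lem2.2}(2)), the Arzel\`a--Ascoli theorem makes this set relatively compact in $C^{m}(K)$, hence in $C^{m}_{c}(K)$, hence in $B^{s}_{p,r}$. This gives compactness of $f\mapsto\phi\,S_Qf$, and combining the two facts completes the proof.

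The step I expect to need the most care is the last one: verifying that ``band--limited $+$ compactly supported $+$ uniformly $C^{k}$--bounded for all $k$'' does upgrade to relative compactness in the specific space $B^{s}_{p,r}$. It is soft but forces one to keep track of the auxiliary embedding $C^{m}_{c}\hookrightarrow B^{s}_{p,r}$ and to choose $m$ large. An alternative would stay at the $L^p$ level -- reduce, via the equivalence of $\|\cdot\|_{B^{s}_{p,r}}$ and $\|\cdot\|_{L^p}$ on band--limited functions, to showing $\{S_Q(\phi f):\|f\|\le 1\}$ precompact in $L^p$, and then invoke the Riesz--Fr\'echet--Kolmogorov criterion. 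There the delicate point migrates to uniform tightness at spatial infinity (since $S_Q$ does not preserve compact support, one must exploit the Schwartz decay of its kernel against the compactly supported $\phi$), and the endpoint $p=\infty$ has to be handled separately through Arzel\`a--Ascoli; this is why I would keep the operator--limit argument as the main line.
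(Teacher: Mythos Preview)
The paper does not prove Proposition~\ref{prop2.1}; it is simply quoted as a standard fact from the Littlewood--Paley toolbox (implicitly from \cite{BCD}), and is only used later in the Ascoli--Arzel\`a step of Proposition~\ref{prop6.2}. So there is no ``paper's own proof'' to compare against.

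Your argument is correct and is in fact the standard route one finds in \cite{BCD}: approximate $M_\phi$ in the operator norm of $\mathcal{L}(B^{s+\varepsilon}_{p,r},B^{s}_{p,r})$ by the spectrally truncated maps $f\mapsto\phi\,S_Qf$, use the $\varepsilon$-gain to get the $2^{-Q\varepsilon}$ decay of $\|\phi(\mathrm{Id}-S_Q)\|$, and verify compactness of each truncated map by combining Bernstein (band-limited $\Rightarrow$ uniform $C^k$ control), compact support of $\phi$, and Arzel\`a--Ascoli, followed by the continuous embedding $C^m_c(K)\hookrightarrow W^{m,p}\hookrightarrow B^{m}_{p,\infty}\hookrightarrow B^{s}_{p,r}$ for an integer $m>s$. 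Two cosmetic remarks: (i) the equality $\|(\mathrm{Id}-S_Q)f\|_{B^{s}_{p,r}}=\big(\sum_{q\ge Q}(2^{qs}\|\Delta_qf\|_{L^p})^r\big)^{1/r}$ holds only up to the usual $\pm 1$ block overlap, which does not affect the $2^{-Q\varepsilon}$ conclusion; (ii) the boundedness of multiplication by $\phi\in C^\infty_c$ on every $B^{t}_{p,r}$ that you invoke is exactly the paraproduct estimate underlying Proposition~\ref{prop2.2}, so the citation of \cite{BCD} there is appropriate.
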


On the other hand, we also present the definition of Chemin-Lerner's
space-time spaces first introduced by J.-Y. Chemin and N. Lerner
\cite{C2}, which are the refinement of the spaces
$L^{\theta}_{T}(\dot{B}^{s}_{p,r})$ or
$L^{\theta}_{T}(B^{s}_{p,r})$.

\begin{defn}\label{defn2.3}
For $T>0, s\in\mathbb{R}, 1\leq r,\theta\leq\infty$, the homogeneous
mixed time-space Besov spaces
$\widetilde{L}^{\theta}_{T}(\dot{B}^{s}_{p,r})$ is defined by
$$\widetilde{L}^{\theta}_{T}(\dot{B}^{s}_{p,r}):
=\{f\in
L^{\theta}(0,T;\mathcal{S}'_{0}):\|f\|_{\widetilde{L}^{\theta}_{T}(\dot{B}^{s}_{p,r})}<+\infty\},$$
where
$$\|f\|_{\widetilde{L}^{\theta}_{T}(\dot{B}^{s}_{p,r})}:=\Big(\sum_{q\in\mathbb{Z}}(2^{qs}\|\dot{\Delta}_{q}f\|_{L^{\theta}_{T}(L^{p})})^{r}\Big)^{\frac{1}{r}}$$
with the usual convention if $r=\infty$.
\end{defn}

\begin{defn}\label{defn2.4}
For $T>0, s\in\mathbb{R}, 1\leq r,\theta\leq\infty$, the
inhomogeneous mixed time-space Besov spaces
$\widetilde{L}^{\theta}_{T}(B^{s}_{p,r})$ is defined by
$$\widetilde{L}^{\theta}_{T}(B^{s}_{p,r}):
=\{f\in
L^{\theta}(0,T;\mathcal{S}'):\|f\|_{\widetilde{L}^{\theta}_{T}(B^{s}_{p,r})}<+\infty\},$$
where
$$\|f\|_{\widetilde{L}^{\theta}_{T}(B^{s}_{p,r})}:=\Big(\sum_{q\geq-1}(2^{qs}\|\Delta_{q}f\|_{L^{\theta}_{T}(L^{p})})^{r}\Big)^{\frac{1}{r}}$$
with the usual convention if $r=\infty$.
\end{defn}

We further define
$$\widetilde{\mathcal{C}}_{T}(B^{s}_{p,r}):=\widetilde{L}^{\infty}_{T}(B^{s}_{p,r})\cap\mathcal{C}([0,T],B^{s}_{p,r})
$$ and $$\widetilde{\mathcal{C}}^1_{T}(B^{s}_{p,r}):=\{f\in\mathcal{C}^1([0,T],B^{s}_{p,r})|\partial_{t}f\in\widetilde{L}^{\infty}_{T}(B^{s}_{p,r})\},$$
where the index $T$ will be omitted when $T=+\infty$.

Next we state some basic properties on the inhomogeneous
Chemin-Lerner's  spaces only, since the similar ones follow in the
homogeneous Chemin-Lerner's spaces.

The first one is that $\widetilde{L}^{\theta}_{T}(B^{s}_{p,r})$ may
be linked with the classical spaces $L^{\theta}_{T}(B^{s}_{p,r})$
via the Minkowski's inequality:
\begin{rem}\label{rem2.2}
It holds that
$$\|f\|_{\widetilde{L}^{\theta}_{T}(B^{s}_{p,r})}\leq\|f\|_{L^{\theta}_{T}(B^{s}_{p,r})}\,\,\,
\mbox{if}\,\, r\geq\theta;\ \ \ \
\|f\|_{\widetilde{L}^{\theta}_{T}(B^{s}_{p,r})}\geq\|f\|_{L^{\theta}_{T}(B^{s}_{p,r})}\,\,\,
\mbox{if}\,\, r\leq\theta.
$$\end{rem}
Let us also recall the property of continuity for product in
Chemin-Lerner's spaces $\widetilde{L}^{\theta}_{T}(B^{s}_{p,r})$.
\begin{prop}\label{prop2.2}
The following inequality holds:
$$
\|fg\|_{\widetilde{L}^{\theta}_{T}(B^{s}_{p,r})}\leq
C(\|f\|_{L^{\theta_{1}}_{T}(L^{\infty})}\|g\|_{\widetilde{L}^{\theta_{2}}_{T}(B^{s}_{p,r})}
+\|g\|_{L^{\theta_{3}}_{T}(L^{\infty})}\|f\|_{\widetilde{L}^{\theta_{4}}_{T}(B^{s}_{p,r})})
$$
whenever $s>0, 1\leq p\leq\infty,
1\leq\theta,\theta_{1},\theta_{2},\theta_{3},\theta_{4}\leq\infty$
and
$$\frac{1}{\theta}=\frac{1}{\theta_{1}}+\frac{1}{\theta_{2}}=\frac{1}{\theta_{3}}+\frac{1}{\theta_{4}}.$$
As a direct corollary, one has
$$\|fg\|_{\widetilde{L}^{\theta}_{T}(B^{s}_{p,r})}
\leq
C\|f\|_{\widetilde{L}^{\theta_{1}}_{T}(B^{s}_{p,r})}\|g\|_{\widetilde{L}^{\theta_{2}}_{T}(B^{s}_{p,r})}$$
whenever $s\geq d/p,
\frac{1}{\theta}=\frac{1}{\theta_{1}}+\frac{1}{\theta_{2}}.$
\end{prop}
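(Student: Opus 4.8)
The plan is to use Bony's paraproduct decomposition adapted to the Chemin--Lerner setting. I would write
$$fg = T_f g + T_g f + R(f,g),$$
where $T_f g = \sum_{q} S_{q-1}f\,\Delta_q g$ is the paraproduct ($S_{q-1}=\sum_{j\le q-2}\Delta_j$ being the low-frequency cut-off) and $R(f,g) = \sum_{q}\sum_{|\nu|\le 1}\Delta_q f\,\Delta_{q+\nu}g$ is the remainder, and estimate the three pieces separately. The whole point is that the norm of $\widetilde{L}^{\theta}_T(B^s_{p,r})$ is an $\ell^r$ sum (over dyadic blocks) of $L^\theta_T(L^p)$ norms, and this order of the norms interacts cleanly with the dyadic blocks appearing in each piece.

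First I would treat $T_f g$. Since $S_{q-1}f$ has spectrum in a ball of radius $\sim 2^q$ and $\Delta_q g$ in an annulus $\sim 2^q$, each block $S_{q-1}f\,\Delta_q g$ has spectrum in an annulus of size $\sim 2^q$, so $\Delta_j(S_{q-1}f\,\Delta_q g)=0$ unless $|j-q|\le N_0$ for a fixed $N_0$. Using the uniform boundedness of $\Delta_j$ and $S_{q-1}$ on Lebesgue spaces, Hölder in space ($L^\infty\cdot L^p\hookrightarrow L^p$), and Hölder in time with $\frac1\theta=\frac1{\theta_1}+\frac1{\theta_2}$, one gets
$$\|\Delta_j(T_f g)\|_{L^\theta_T(L^p)}\le C\|f\|_{L^{\theta_1}_T(L^\infty)}\sum_{|q-j|\le N_0}\|\Delta_q g\|_{L^{\theta_2}_T(L^p)}.$$
Multiplying by $2^{js}$, using $2^{js}\approx 2^{qs}$ on the range $|q-j|\le N_0$, and taking the $\ell^r_j$ norm (a finite shift-sum is harmless) yields $\|T_f g\|_{\widetilde{L}^{\theta}_T(B^s_{p,r})}\le C\|f\|_{L^{\theta_1}_T(L^\infty)}\|g\|_{\widetilde{L}^{\theta_2}_T(B^s_{p,r})}$, valid for \emph{every} $s\in\mathbb{R}$. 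The term $T_g f$ is handled symmetrically with the pair $(\theta_3,\theta_4)$, again for all $s$.

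The remainder $R(f,g)$ is where the hypothesis $s>0$ enters, and I expect it to be the only real obstacle. Now $\Delta_q f\,\Delta_{q+\nu}g$ has spectrum merely in a \emph{ball} of radius $\sim 2^q$, so $\Delta_j(\Delta_q f\,\Delta_{q+\nu}g)=0$ only forces $q\ge j-N_1$. Proceeding as before one obtains
$$2^{js}\|\Delta_j R(f,g)\|_{L^\theta_T(L^p)}\le C\|f\|_{L^{\theta_1}_T(L^\infty)}\sum_{q\ge j-N_1}2^{(j-q)s}\,\big(2^{qs}\|\Delta_q g\|_{L^{\theta_2}_T(L^p)}\big),$$
and since $s>0$ the kernel $\big(2^{(j-q)s}\mathbf 1_{q\ge j-N_1}\big)$ defines a convolution against an $\ell^1$ sequence, so Young's inequality for series gives $\|R(f,g)\|_{\widetilde{L}^{\theta}_T(B^s_{p,r})}\le C\|f\|_{L^{\theta_1}_T(L^\infty)}\|g\|_{\widetilde{L}^{\theta_2}_T(B^s_{p,r})}$ (one could equally put the $L^\infty$-norm on $g$). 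Adding the three estimates proves the displayed inequality. In the inhomogeneous framework one must in addition keep track of the $j=-1$ block and of the starting indices in the sums, but this is routine bookkeeping.

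For the corollary I would first record the embedding $\widetilde{L}^{\theta_i}_T(B^s_{p,r})\hookrightarrow L^{\theta_i}_T(L^\infty)$ when $s\ge d/p$: by the Bernstein inequality (Lemma \ref{lem2.1}), $\|f(t)\|_{L^\infty}\le\sum_j\|\Delta_j f(t)\|_{L^\infty}\le C\sum_j 2^{jd/p}\|\Delta_j f(t)\|_{L^p}$, and taking the $L^{\theta_i}_T$ norm together with $\ell^r\hookrightarrow\ell^1$ (or, when $s>d/p$, the Besov embedding of Lemma \ref{lem2.2}) gives $\|f\|_{L^{\theta_i}_T(L^\infty)}\le C\|f\|_{\widetilde{L}^{\theta_i}_T(B^s_{p,r})}$. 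Applying the main inequality and bounding the two $L^{\theta}_T(L^\infty)$ factors by the corresponding Chemin--Lerner norms then finishes the proof.
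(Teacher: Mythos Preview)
The paper does not actually prove Proposition~\ref{prop2.2}; it is merely recalled as a known product estimate in Chemin--Lerner spaces, with the implicit reference being the standard treatment in \cite{BCD}. Your argument via Bony's paraproduct decomposition is exactly the canonical proof one finds there, and the handling of $T_fg$, $T_gf$, and $R(f,g)$ is correct, including the identification of $s>0$ as the hypothesis needed to make the discrete convolution kernel for the remainder lie in $\ell^1$.

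One small slip in the corollary: you write ``$\ell^r\hookrightarrow\ell^1$'', which is the wrong direction. At the borderline $s=d/p$ the sum $\sum_j 2^{jd/p}\|\Delta_j f\|_{L^{\theta_i}_T(L^p)}$ is already the $\widetilde{L}^{\theta_i}_T(B^{d/p}_{p,1})$ norm, so one needs $r=1$ for the embedding into $L^{\theta_i}_T(L^\infty)$ to hold; for $s>d/p$ the extra factor $2^{j(d/p-s)}$ lies in $\ell^{r'}$ and H\"older in $j$ works for any $r$. This is the usual caveat that $B^{d/p}_{p,r}\hookrightarrow L^\infty$ only when $r=1$, and since the paper works throughout with $r=1$ the point is moot for its applications.
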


Then we state a result of continuity for compositions in
$\widetilde{L}^{\theta}_{T}(B^{s}_{p,r})$.
\begin{prop}\label{prop2.3}
Let $s>0$, $1\leq p, r, \theta\leq \infty$, $F\in
W^{[s]+1,\infty}_{loc}(I;\mathbb{R})$ with $F(0)=0$, $T\in
(0,\infty]$ and $v\in \widetilde{L}^{\theta}_{T}(B^{s}_{p,r})\cap
L^{\infty}_{T}(L^{\infty}).$ Then
$$\|F(v)\|_{\widetilde{L}^{\theta}_{T}(B^{s}_{p,r})}\leq
C(1+\|v\|_{L^{\infty}_{T}(L^{\infty})})^{[s]+1}\|v\|_{\widetilde{L}^{\theta}_{T}(B^{s}_{p,r})}.$$
\end{prop}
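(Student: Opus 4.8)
The plan is to transplant, to the time--space setting, the telescoping/Littlewood--Paley proof of the composition estimate in $B^{s}_{p,r}$ (cf. \cite{BCD}), keeping the $L^{\theta}_{T}(L^{p})$ norm \emph{inside} the dyadic summation so that the $\ell^{r}$-sum in the frequency index is taken only at the very end. Set $S_{q}=\sum_{q'\leq q-1}\Delta_{q'}$, so $S_{-1}=0$, $S_{0}=\Delta_{-1}$ and $S_{q+1}-S_{q}=\Delta_{q}$, and use $F(0)=0$ to write
\[
F(v)=\sum_{q\geq-1}\bigl(F(S_{q+1}v)-F(S_{q}v)\bigr)=\sum_{q\geq-1}m_{q}\,\Delta_{q}v,\qquad m_{q}:=\int_{0}^{1}F'\!\bigl(S_{q}v+\tau\Delta_{q}v\bigr)\,d\tau .
\]
Since $v$ takes values in a fixed bounded set, $\|m_{q}\|_{L^{\infty}_{T}(L^{\infty})}\leq\|F'\|_{L^{\infty}}$ uniformly in $q$ and $t$. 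Writing $R:=\|v\|_{L^{\infty}_{T}(L^{\infty})}$ and combining the chain rule with the Bernstein inequality (Lemma \ref{lem2.1}) applied to the low-frequency functions $S_{q}v$ and to $\Delta_{q}v$, one obtains, for every multi-index $\alpha$ with $|\alpha|\leq[s]+1$,
\[
\|\partial^{\alpha}m_{q}\|_{L^{\infty}_{T}(L^{\infty})}\leq C\,(1+R)^{|\alpha|}\,2^{q|\alpha|},
\]
the constant depending only on $\|F\|_{W^{[s]+1,\infty}}$ on the range of $v$ (and on $d$). This estimate is what ultimately yields the factor $(1+\|v\|_{L^{\infty}_{T}(L^{\infty})})^{[s]+1}$.

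Next I would estimate $\Delta_{j}F(v)=\sum_{q\geq-1}\Delta_{j}(m_{q}\Delta_{q}v)$, $j\geq-1$, by splitting the sum at $q\approx j$. For the "high" block $q\geq j-N_{0}$ I use only $\|m_{q}\|_{L^{\infty}}\lesssim1$ and the boundedness of $\Delta_{j}$ on $L^{p}$, so that $2^{js}\sum_{q\geq j-N_{0}}\|\Delta_{j}(m_{q}\Delta_{q}v)\|_{L^{\theta}_{T}(L^{p})}\lesssim\sum_{q\geq j-N_{0}}2^{(j-q)s}\,2^{qs}\|\Delta_{q}v\|_{L^{\theta}_{T}(L^{p})}$; as $s>0$ the kernel $(2^{ks}\mathbf 1_{k\leq N_{0}})_{k}$ lies in $\ell^{1}$, and Young's inequality for $\ell^{r}$-convolutions bounds this by $c_{j}\,\|v\|_{\widetilde L^{\theta}_{T}(B^{s}_{p,r})}$ with $(c_{j})_{j}\in\ell^{r}$, $\|(c_{j})\|_{\ell^{r}}\leq1$. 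For the "low" block $q<j-N_{0}$ I instead use $\|\Delta_{j}g\|_{L^{p}}\lesssim2^{-jM}\sum_{|\alpha|=M}\|\partial^{\alpha}g\|_{L^{p}}$ with $M=[s]+1$ (again Lemma \ref{lem2.1}, applied to the spectrally localized function $\Delta_{j}g$), Leibniz' rule, and the two bounds above, giving $\|\partial^{\alpha}(m_{q}\Delta_{q}v)\|_{L^{\theta}_{T}(L^{p})}\lesssim(1+R)^{M}2^{qM}\|\Delta_{q}v\|_{L^{\theta}_{T}(L^{p})}$ and hence $\|\Delta_{j}(m_{q}\Delta_{q}v)\|_{L^{\theta}_{T}(L^{p})}\lesssim(1+R)^{M}2^{(q-j)M}\|\Delta_{q}v\|_{L^{\theta}_{T}(L^{p})}$; therefore $2^{js}\sum_{q<j-N_{0}}\|\cdots\|\lesssim(1+R)^{M}\sum_{q<j-N_{0}}2^{(j-q)(s-M)}\,2^{qs}\|\Delta_{q}v\|_{L^{\theta}_{T}(L^{p})}$, and since $M>s$ the kernel $(2^{k(s-M)}\mathbf 1_{k>N_{0}})_{k}$ is in $\ell^{1}$, so Young's inequality again closes this piece. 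Summing the two blocks and taking $\ell^{r}$ in $j$ yields the stated bound. The term $q=-1$ is handled like the low block with $q\sim0$, its contribution being dominated by $\|\Delta_{-1}v\|_{L^{\theta}_{T}(L^{p})}\lesssim\|v\|_{\widetilde L^{\theta}_{T}(B^{s}_{p,r})}$, again because $s>0$.

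All of this transfers from the classical $B^{s}_{p,r}$ argument without change, since the operators involved ($\Delta_{j}$, $\partial^{\alpha}$, multiplication by $m_{q}$) act in the space variable only, while the bounds on $m_{q}$ and its derivatives hold uniformly in $t$ with $\|v(t)\|_{L^{\infty}}$ replaced by $R$; hence the norm $\|\cdot\|_{L^{\theta}_{T}(L^{p})}$ may be carried along throughout. The step I expect to be the most delicate is the low-frequency block $q<j-N_{0}$: one must apply Leibniz' rule together with the Bernstein bounds for $\partial^{\alpha}m_{q}$ carefully enough that the gain in $2^{q}$ is exactly of order $M$ and the power of $(1+R)$ is no worse than $[s]+1$, and it is precisely there that $M=[s]+1>s$ is used (just as $s>0$ is used in the high block and in the $q=-1$ term). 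Finally, the continuity in time implicit in $\widetilde{\mathcal C}_{T}(B^{s}_{p,r})$ follows from the local Lipschitz character of $F$ together with $v\in\widetilde{\mathcal C}_{T}(B^{s}_{p,r})$, by a routine approximation argument.
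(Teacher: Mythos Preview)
The paper does not prove this proposition; it is stated in Section~\ref{sec:2} as a known preliminary result, the implicit reference being \cite{BCD}. Your argument---the Meyer telescoping $F(v)=\sum_{q}m_{q}\Delta_{q}v$, the Fa\`a di Bruno/Bernstein bound $\|\partial^{\alpha}m_{q}\|_{L^{\infty}}\lesssim(1+R)^{|\alpha|}2^{q|\alpha|}$, and the high/low split in $q$ relative to $j$ with $\ell^{1}$-$\ell^{r}$ Young---is exactly the standard stationary-case proof, transplanted by the observation that every operator involved acts in $x$ only and the $m_{q}$-bounds are uniform in $t$, so $\|\cdot\|_{L^{p}}$ may be replaced by $\|\cdot\|_{L^{\theta}_{T}(L^{p})}$ throughout before taking the $\ell^{r}$-sum. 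This is correct and is the intended argument. (Your last paragraph on time continuity is extraneous to the statement as written.)
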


Finally, we present the estimates of commutators in Chemin-Lerner's
spaces to end up this section. The indices $s,p$ behave just as in
the stationary cases (see, e.g. \cite{BCD,DA,X}) whereas the time
exponent $\theta$ behaves according to H\"{o}lder inequality.
\begin{prop}\label{prop2.4}
Let  $1<p<\infty$ and $1\leq \rho\leq\infty$. Then there exists a
generic constant $C>0$ depending only on $s, d$ such that
$$\cases{\|[f,\Delta_{q}]\mathcal{A}g\|_{L^{\theta}_{T}(L^{p})}\leq
Cc_{q}2^{-qs}\|\nabla
f\|_{\widetilde{L}^{\theta_{1}}_{T}(B^{s-1}_{p,1})}\|g\|_{\widetilde{L}^{\theta_{2}}_{T}(B^{s}_{p,1})},\
\ s=1+\frac{d}{p},\cr
\|[f,\Delta_{q}]g\|_{L^{\theta}_{T}(L^{p})}\leq
Cc_{q}2^{-q(s+1)}\|f\|_{\widetilde{L}^{\theta_{1}}_{T}(\dot{B}^{\frac{d}{p}+1}_{p,1})}\|g\|_{\widetilde{L}^{\theta_{2}}_{T}(\dot{B}^{s}_{p,1})},\
\ s\in(-\frac{d}{p}-1, \frac{d}{p}],}
$$
where the commutator $[\cdot,\cdot]$ is defined by $[f,g]=fg-gf$,
and the operator $\mathcal{A}:=\mathrm{div}$ or $\mathrm{\nabla}$.
$\{c_{q}\}$ denotes a sequence such that $\|(c_{q})\|_{ {l^{1}}}\leq
1$, $\frac{1}{\theta}=\frac{1}{\theta_{1}}+\frac{1}{\theta_{2}}$.
\end{prop}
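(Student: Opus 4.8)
The plan is to reduce the estimate to the known stationary (fixed-time) commutator estimates — precisely the ones found in \cite{BCD,DA,X} — and then recover the time exponent $\theta$ by a single application of H\"older's inequality in time together with Minkowski's inequality. First I would recall the Bony paraproduct decomposition $fg = T_f g + T_g f + R(f,g)$, and apply it inside the commutator $[f,\Delta_q]\mathcal{A}g$. After localizing with $\Delta_q$, the terms split into: a principal paraproduct piece $[T_f,\Delta_q]\mathcal{A}g$ (or $[T_f,\Delta_q]g$ in the second case), a paraproduct $T_{\Delta_q \mathcal{A}g}f$-type term, and a remainder term $R$-type contribution. The key classical gain is that $[T_f,\Delta_q]\mathcal{A}g$ enjoys an extra derivative cancellation: writing $\Delta_q = h_q * \cdot$ and using a first-order Taylor expansion of $h_q$, one produces a factor $2^{-q}$ times $\nabla f$ acting on a frequency-localized piece of $g$, which is exactly what converts the naive bound into the stated $2^{-qs}$ (resp.\ $2^{-q(s+1)}$) decay with $\nabla f$ (resp.\ $f$) in the Besov space one index lower.

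Concretely, for the first estimate ($s=1+d/p$, $\mathcal{A}=\mathrm{div}$ or $\nabla$): at each fixed time $t$ the stationary commutator estimate gives
$$\|[f,\Delta_q]\mathcal{A}g(t)\|_{L^p} \leq C\, c_q(t)\, 2^{-qs}\,\|\nabla f(t)\|_{B^{s-1}_{p,1}}\,\|g(t)\|_{B^{s}_{p,1}},$$
where $\|(c_q(t))\|_{\ell^1}\leq 1$ for a.e.\ $t$. Now take the $L^{\theta}_T$ norm in time of the left-hand side; on the right-hand side apply H\"older with $\frac{1}{\theta}=\frac{1}{\theta_1}+\frac{1}{\theta_2}$ to split $\|\nabla f(t)\|_{B^{s-1}_{p,1}}\|g(t)\|_{B^{s}_{p,1}}$ between $L^{\theta_1}_T$ and $L^{\theta_2}_T$. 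The only subtlety is that after this we have $L^{\theta_1}_T$ of a Besov norm, i.e.\ $L^{\theta_1}_T(B^{s-1}_{p,1})$ rather than the Chemin-Lerner norm $\widetilde{L}^{\theta_1}_T(B^{s-1}_{p,1})$. To fix this, one instead keeps the dyadic sum outside: redo the estimate block by block, bounding $\|[f,\Delta_q]\mathcal{A}g\|_{L^{\theta}_T(L^p)}$ by $C c_q 2^{-qs}$ times a product of $\|2^{q'(s-1)}\|\Delta_{q'}\nabla f\|_{L^{\theta_1}_T(L^p)}$-type sequences summed in $q'$, which is precisely the Chemin-Lerner norm; the $\ell^1$ summability of $\{c_q\}$ is preserved because convolution of the two $\ell^1$ (resp.\ $\ell^1\times\ell^\infty$) sequences arising from the paraproduct structure stays in $\ell^1$ with norm $\leq 1$ after normalization. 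The second estimate, valid for $s\in(-\frac{d}{p}-1,\frac{d}{p}]$, is handled the same way using the homogeneous stationary commutator estimate $\|[f,\Delta_q]g(t)\|_{L^p}\leq C c_q(t) 2^{-q(s+1)}\|f(t)\|_{\dot B^{d/p+1}_{p,1}}\|g(t)\|_{\dot B^s_{p,1}}$; here the restriction on $s$ is exactly the range in which the remainder term $R(f,g)$ and the low-frequency paraproduct are summable, and $1<p<\infty$ is needed so that the pieces land in $L^p$ without endpoint loss.

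The main obstacle is bookkeeping rather than a genuine new difficulty: one must be careful that the sequence $\{c_q\}$ emerging from the time-integrated, dyadically-summed estimate is still an $\ell^1$ sequence with norm at most $1$ — this requires that the Besov-norm factors be pulled out \emph{before} taking the $\ell^1$ sum in $q$, using the $\widetilde{L}^{\theta_i}_T$ structure precisely so that Minkowski's inequality (Remark \ref{rem2.2}) can be invoked in the right direction. Since all the stationary commutator estimates are already in the literature and the paraproduct support properties are unchanged by the time integration, no new harmonic-analytic input is needed; the proposition follows by combining them with H\"older in time, exactly as the statement's parenthetical remark ("the time exponent $\theta$ behaves according to H\"older inequality") suggests. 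I would therefore present the proof tersely, citing \cite{BCD} for the fixed-time estimates and indicating only the H\"older/Minkowski step in detail.
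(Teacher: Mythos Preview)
Your proposal is correct and matches the paper's own treatment: the paper does not give a detailed proof of Proposition~\ref{prop2.4} but simply remarks that ``the indices $s,p$ behave just as in the stationary cases (see, e.g.\ \cite{BCD,DA,X}) whereas the time exponent $\theta$ behaves according to H\"older inequality,'' which is exactly the reduction you outline. Your additional care about keeping the dyadic sum outside so as to land in the Chemin--Lerner norm $\widetilde{L}^{\theta_i}_T$ rather than $L^{\theta_i}_T(B^{\cdot}_{p,1})$ is the right bookkeeping point and is more explicit than what the paper states.
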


\section{Local existence}\setcounter{equation}{0} \label{sec:3}
In this section, we prove the local existence of classical solutions
to the symmetric dissipative system (\ref{R-E5}) subject to the
following initial data
\begin{equation}
V|_{t=0}=V_{0} \ \ \ \mbox{with}\ \ \  V_{0}=V(U_{0}), \label{R-E9}
\end{equation}
which can be regarded as an improvement of the works of Iftimie
\cite{I} and Chae \cite{C1} for symmetric hyperbolic systems. Here
we could neglect the source term $\tilde{H}(V)$ in (\ref{R-E5}) for
simplicity, since it is only responsible for the large-time behavior
of solutions.

First, we consider the linear equations of (\ref{R-E5}):
\begin{equation}
\tilde{A}^{0}(V)\hat{V}_{t}+\sum_{j=1}^{d}\tilde{A}^{j}(V)\hat{V}_{x_{j}}=0,\label{R-E10}
\end{equation}
with
\begin{equation}
\hat{V}|_{t=0}=\hat{V}_{0}=V_{0}. \label{R-E11}
\end{equation}
For the initial data $V_{0}$, we assume that $V_{0}-\bar{V}\in
B^{\sigma}_{2,1}$ and
\begin{equation}
V_{0}(x)\in \mathcal{O}_{0}\ \ \ \mbox{for any}\ \ \ x\in
\mathbb{R}^{d}, \label{R-E1200}
\end{equation}
where $\mathcal{O}_{0}$ is a bounded open convex set in
$\mathbb{R}^{N}$ satisfying
$\bar{\mathcal{O}}_{0}\subset\mathcal{O}_{V}$.

For the existence of (\ref{R-E10})-(\ref{R-E11}), the reader is
referred to Proposition \ref{prop6.2} in the Appendix. Furthermore,
for $V(t,x)$, given function on $Q_{T}=[0,T]\times \mathbb{R}^{d}$,
we assume that
\begin{equation}V-\bar{V}\in \widetilde{\mathcal{C}}_{T}(B^{\sigma}_{2,1})
\cap\widetilde{\mathcal{C}}^{1}_{T}(B^{\sigma-1}_{2,1}),
\label{R-E12}
\end{equation}
\begin{equation}
V(t,x)\in\mathcal{O}_{1}\ \ \ \mbox{for  any}\ \ (t,x)\in Q_{T},
\label{R-E13}
\end{equation}
\begin{equation}
\|V(t,x)-\bar{V}\|_{\widetilde{L}^{\infty}_{T}(B^{\sigma}_{2,1})}\leq
M_{1}, \label{R-E14}
\end{equation}
\begin{equation}
\|\partial_{t}V(t,x)\|_{\widetilde{L}^{\infty}_{T}(B^{\sigma-1}_{2,1})}\leq
M_{2}, \label{R-E15}
\end{equation}
where $\mathcal{O}_{1}$ is a bounded open convex set in
$\mathbb{R}^{N}$ satisfying $\bar{\mathcal{O}}_{1}\subset
\mathcal{O}_{V},$ and $M_{1},M_{2}$ are two constants. Denote
$$X^{\sigma}_{T}(\mathcal{O}_{1};M_{1},M_{2})=\{V\in\mathcal{O}_{V}:\mbox{the conditions}\ (\ref{R-E12})-(\ref{R-E15})\ \mbox{are satisfied}\}.$$
Next, we shall prove that
$X^{\sigma}_{T}(\mathcal{O}_{1};M_{1},M_{2})$ is an invariant set
under iterations by determining $\mathcal{O}_{1},M_{1},M_{2}$ and
$T$.
\begin{lem}(invariant set under iterations)\label{lem3.1}
Suppose that the initial data satisfy $V_{0}-\bar{V}\in
B^{\sigma}_{2,1}$ and (\ref{R-E1200}). Then there exists a time
$T_{0}>0$, such that if $V\in
X^{\sigma}_{T_{0}}(\mathcal{O}_{1};M_{1},M_{2})$, the Cauchy problem
(\ref{R-E10})-(\ref{R-E11}) has a unique solution $\hat{V}$ in the
same $X^{\sigma}_{T_{0}}(\mathcal{O}_{1};M_{1},M_{2})$.
\end{lem}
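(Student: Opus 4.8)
The plan is to show that the solution map $V \mapsto \hat{V}$ defined via the linear problem (\ref{R-E10})--(\ref{R-E11}) sends $X^{\sigma}_{T_0}(\mathcal{O}_1;M_1,M_2)$ into itself, once $\mathcal{O}_1$, $M_1$, $M_2$ and $T_0$ are chosen in the right order. Existence and uniqueness of $\hat{V}$ in $\widetilde{\mathcal{C}}_{T}(B^{\sigma}_{2,1}) \cap \widetilde{\mathcal{C}}^{1}_{T}(B^{\sigma-1}_{2,1})$ for given $V$ is not the issue here — that is handled by Proposition \ref{prop6.2} in the Appendix — so the real content is the uniform bounds. First I would fix the geometry: since $V_0(x) \in \mathcal{O}_0$ with $\overline{\mathcal{O}_0} \subset \mathcal{O}_V$, and $B^{\sigma}_{2,1} \hookrightarrow \mathcal{C}_0$ (Lemma \ref{lem2.2}(5), as $\sigma = 1 + d/2 > d/2$), I choose a bounded open convex $\mathcal{O}_1$ with $\overline{\mathcal{O}_0} \subset \mathcal{O}_1$ and $\overline{\mathcal{O}_1} \subset \mathcal{O}_V$, leaving a fixed "buffer" distance $\rho_0 > 0$ between $\mathcal{O}_0$ and $\partial\mathcal{O}_1$. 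Then $M_1$ is chosen (roughly $2\|V_0 - \bar V\|_{B^{\sigma}_{2,1}}$, up to the constant from the linear estimate), $M_2$ is chosen in terms of $M_1$ and the sup-norms of $\tilde{A}^0, \tilde{A}^j$ on $\overline{\mathcal{O}_1}$ via the equation itself, and finally $T_0$ is taken small depending on all of these.

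The core estimate is the energy-type bound for the linear symmetric hyperbolic system applied frequency block by frequency block. Localizing (\ref{R-E10}) with $\Delta_q$, commuting $\Delta_q$ past the variable coefficients $\tilde{A}^j(V)$, and using the symmetry of $\tilde{A}^j(V)$ together with positive definiteness of $\tilde{A}^0(V)$ (both guaranteed by Definition \ref{defn1.2} since $V(t) \in \mathcal{O}_1 \subset \mathcal{O}_V$), one gets a differential inequality of the form
\begin{equation*}
\frac{d}{dt}\|\Delta_q \hat{V}\|_{L^2} \leq C\big(\|\nabla V\|_{L^\infty} + \|\partial_t V\|_{L^\infty}\big)\|\Delta_q \hat{V}\|_{L^2} + C c_q 2^{-q\sigma}\|\nabla V\|_{B^{\sigma-1}_{2,1}}\|\hat{V} - \bar V\|_{B^{\sigma}_{2,1}}
\end{equation*}
after handling the $\tfrac12 \partial_t \langle \tilde{A}^0(V)\Delta_q\hat{V}, \Delta_q\hat{V}\rangle$ term (the $\partial_t \tilde{A}^0$ piece contributes the $\|\partial_t V\|_{L^\infty}$ factor, via the chain rule and $\partial_t V \in \widetilde{L}^\infty_T(B^{\sigma-1}_{2,1}) \hookrightarrow L^\infty_T(L^\infty)$ since $\sigma - 1 = d/2$... careful: $B^{\sigma-1}_{2,1} = B^{d/2}_{2,1} \hookrightarrow \mathcal{C}_0$, so this is fine) and the commutator $[\tilde{A}^j(V), \Delta_q]\partial_{x_j}\hat{V}$ via Proposition \ref{prop2.4} with the composition estimate Proposition \ref{prop2.3} for $\tilde{A}^j(V) - \tilde{A}^j(\bar V)$. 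Multiplying by $2^{q\sigma}$, summing over $q \geq -1$, and using that $\|\nabla V\|_{L^\infty} \lesssim \|V - \bar V\|_{B^{\sigma}_{2,1}} \leq M_1$ and $\|\partial_t V\|_{L^\infty} \lesssim M_2$, Gronwall gives
\begin{equation*}
\|\hat{V}(t) - \bar V\|_{B^{\sigma}_{2,1}} \leq e^{C(M_1 + M_2)t}\|V_0 - \bar V\|_{B^{\sigma}_{2,1}},
\end{equation*}
so taking $T_0$ small enough that $e^{C(M_1+M_2)T_0} \leq 2$ recovers (\ref{R-E14}) with $M_1 = 2\|V_0 - \bar V\|_{B^{\sigma}_{2,1}}$ (absorbing the linear-estimate constant into $M_1$'s definition). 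The bound (\ref{R-E15}) on $\partial_t \hat{V}$ then follows by solving the equation $\partial_t\hat{V} = -(\tilde{A}^0(V))^{-1}\sum_j \tilde{A}^j(V)\partial_{x_j}\hat{V}$ for $\partial_t\hat{V}$, using the algebra property of $B^{\sigma-1}_{2,1}$ (Proposition \ref{prop2.2}, valid since $\sigma - 1 = d/2$) together with the composition estimate applied to $(\tilde{A}^0(V))^{-1}$ and $\tilde{A}^j(V)$, which bounds $\|\partial_t\hat{V}\|_{\widetilde{L}^\infty_T(B^{\sigma-1}_{2,1})}$ by a constant times $M_1$ (times a function of $M_1$ and $\mathrm{dist}(\mathcal{O}_1,\partial\mathcal{O}_V)^{-1}$) — this fixes $M_2$. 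Regularity (\ref{R-E12}) is part of the output of Proposition \ref{prop6.2}.

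It remains to recover the pointwise range condition (\ref{R-E13}), i.e. $\hat{V}(t,x) \in \mathcal{O}_1$ for all $(t,x) \in Q_{T_0}$. Here I would estimate $\|\hat{V}(t) - V_0\|_{L^\infty}$: from $\hat{V}(t) - V_0 = \int_0^t \partial_t\hat{V}(s)\,ds$ and the embedding $B^{\sigma-1}_{2,1} \hookrightarrow L^\infty$, one gets $\|\hat{V}(t) - V_0\|_{L^\infty} \leq t\,\|\partial_t\hat{V}\|_{\widetilde{L}^\infty_T(B^{\sigma-1}_{2,1})} \leq C M_2 T_0$. Shrinking $T_0$ further so that $C M_2 T_0 < \rho_0$ (the buffer distance fixed at the start) forces $\hat{V}(t,x)$ to stay within distance $\rho_0$ of $\mathcal{O}_0$, hence inside $\mathcal{O}_1$; this is where convexity of $\mathcal{O}_1$ is used. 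Uniqueness of $\hat{V}$ in the class is immediate from linearity (the difference of two solutions with the same data satisfies the homogeneous problem and the energy estimate forces it to vanish). The main obstacle I anticipate is purely bookkeeping: choosing $\mathcal{O}_1$, $M_1$, $M_2$, $T_0$ in a consistent, non-circular order — the geometry $\mathcal{O}_1$ and the buffer $\rho_0$ must be fixed first (depending only on $V_0$ and $\mathcal{O}_V$), then $M_1$ from the data and the linear constant, then $M_2$ from $M_1$ and the sup-norms of the coefficient matrices over $\overline{\mathcal{O}_1}$, and only then $T_0$ small relative to all three — and making sure every constant $C$ appearing along the way depends only on quantities already fixed, not on $T_0$ or on the particular $V$ in the invariant set. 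The critical-regularity subtlety ($\sigma - 1 = d/2$ is exactly the algebra/embedding threshold $B^{d/p}_{p,1}$, so there is no room to spare) is precisely what makes the Chemin--Lerner norms, rather than $L^\theta_T(B^s_{p,r})$, the right vehicle, and is the reason the commutator and product estimates are stated in Section \ref{sec:2} in that form.
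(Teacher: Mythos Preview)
Your proposal is correct and follows essentially the same approach as the paper: frequency-localized energy estimates with commutator bounds (Proposition~\ref{prop2.4}) and the equivalence $\|\cdot\|_{L^2_{\tilde{A}^0}}\approx\|\cdot\|_{L^2}$ yield the Gronwall-type bound $\|\hat V-\bar V\|_{\widetilde L^\infty_{T}(B^\sigma_{2,1})}\leq e^{CT(M_1+M_2)}\|V_0-\bar V\|_{B^\sigma_{2,1}}$, from which $M_1=2\|V_0-\bar V\|_{B^\sigma_{2,1}}$, then $M_2=C(1+M_1)M_1$ from the equation, and finally the range condition via $\|\hat V(t)-V_0\|_{L^\infty}\leq CT_0M_2$. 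The only cosmetic difference is that the paper fixes $\mathcal{O}_1$ at the end as the $d_1$-neighborhood of $\mathcal{O}_0$ rather than at the start, and the convexity of $\mathcal{O}_1$ is not actually needed for the range argument---the buffer distance alone suffices.
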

\begin{proof}
First, it follows from Proposition \ref{prop6.2} and the assumption
of Lemma \ref{lem3.1} that $\hat{V}-\bar{V}\in
\widetilde{\mathcal{C}}_{T}(B^{\sigma}_{2,1})
\cap\widetilde{\mathcal{C}}^{1}_{T}(B^{\sigma-1}_{2,1})$. Next, we
show the solution $\hat{V}$ satisfies (\ref{R-E14})-(\ref{R-E15}).

Set $\hat{Z}=\hat{V}-\bar{V}$. The system (\ref{R-E10}) can be
written as
\begin{equation}
\widetilde{A}^{0}(V)\hat{Z}_{t}+\sum_{j=1}^{d}\widetilde{A}^{j}(V)\hat{Z}_{x_{j}}=0.
\label{R-E16}
\end{equation}
Applying the operator $\Delta_{q}$ to (\ref{R-E16}), we infer that
$\Delta_{q}\hat{Z}$ satisfies
\begin{equation}
\widetilde{A}^{0}(V)\Delta_{q}\hat{Z}_{t}+\sum_{j=1}^{d}\widetilde{A}^{j}(V)\Delta_{q}\hat{Z}_{x_{j}}
=-\sum_{j=1}^{d}\widetilde{A}^{0}(V)[\Delta_{q},\widetilde{A}^{0}(V)^{-1}\widetilde{A}^{j}(V)]\hat{Z}_{x_{j}},
\label{R-E17}
\end{equation}
where the commutator $[\cdot,\cdot]$ is defined by $[f,g]:=fg-gf$.

Perform the inter product with $\Delta_{q}\hat{Z}$ on both sides of
the equation (\ref{R-E17}) to get
\begin{eqnarray}
&&\langle\widetilde{A}^{0}(V)\Delta_{q}\hat{Z},\Delta_{q}\hat{Z}\rangle_{t}
+\sum_{j=1}^{d}\langle\widetilde{A}^{j}(V)\Delta_{q}\hat{Z},
\Delta_{q}\hat{Z}\rangle_{x_{j}}
\nonumber\\&=&-2\sum_{j=1}^{d}\langle\widetilde{A}^{0}(V)
[\Delta_{q},\widetilde{A}^{0}(V)^{-1}\widetilde{A}^{j}(V)]\hat{Z}_{x_{j}},\Delta_{q}\hat{Z}\rangle
+\langle\mathrm{div}\mathbb{A}(V)\Delta_{q}\hat{Z},
\Delta_{q}\hat{Z}\rangle, \label{R-E18}
\end{eqnarray}
whereafter we use the notations:
$$\mathbb{A}(V)=(\tilde{A}^{0}(V),\tilde{A}^{1}(V),\cdot\cdot\cdot,\tilde{A}^{d}(V)), \ \ \ \mathrm{div}\mathbb{A}(V)=\tilde{A}^{0}(V)_{t}+\sum_{j=1}^{d}\tilde{A}^{j}(V)_{x_{j}}.$$
Integrating (\ref{R-E18}) over $\mathbb{R}^{d}$ gives
\begin{eqnarray}
\frac{d}{dt}\|\Delta_{q}\hat{Z}\|^2_{L^2_{\tilde{A}^{0}}}&=&-2\sum_{j=1}^{d}\int_{\mathbb{R}^{d}}\langle\widetilde{A}^{0}(V)
[\Delta_{q},\widetilde{A}^{0}(V)^{-1}\widetilde{A}^{j}(V)]\hat{Z}_{x_{j}},\Delta_{q}\hat{Z}\rangle
dx \nonumber
\\&&+\int_{\mathbb{R}^{d}}\langle\mathrm{div}\mathbb{A}(V)\Delta_{q}\hat{Z},
\Delta_{q}\hat{Z}\rangle dx \label{R-E19}
\end{eqnarray}
with
$\|f\|_{L^2_{\tilde{A}^{0}}}:=\Big(\int_{\mathbb{R}^{d}}\langle\tilde{A}^{0}(V)f,f\rangle
dx\Big)^{1/2}.$

Since $V\in\mathcal{O}_{1}$ with $\bar{\mathcal{O}}_{1}\subset
\mathcal{O}_{V}$, there exists a positive constant
$C=C(\mathcal{O}_{1},M_{1})$ such that
\begin{equation}C^{-1}I_{N}\leq \tilde{A}^{0}(V)\leq CI_{N},\label{R-E20}\end{equation}
which yields $\|f\|_{L^2_{\tilde{A}^{0}}}\approx\|f\|_{L^2}$.

Therefore, we have
\begin{eqnarray}
\frac{d}{dt}\|\Delta_{q}\hat{Z}\|^2_{L^2}&=&-2\sum_{j=1}^{d}\int_{\mathbb{R}^{d}}\langle\widetilde{A}^{0}(V)
[\Delta_{q},\widetilde{A}^{0}(V)^{-1}\widetilde{A}^{j}(V)]\hat{Z}_{x_{j}},\Delta_{q}\hat{Z}\rangle
dx \nonumber
\\&&+\int_{\mathbb{R}^{d}}\langle\mathrm{div}\mathbb{A}(V)\Delta_{q}\hat{Z},
\Delta_{q}\hat{Z}\rangle dx\nonumber
\\&\leq&C\|[\Delta_{q},\widetilde{A}^{0}(V)^{-1}\widetilde{A}^{j}(V)]\hat{Z}_{x_{j}}\|_{L^2}\|\Delta_{q}\hat{Z}\|_{L^2}\\&&\nonumber+
C\|\mathrm{div}\mathbb{A}(V)\|_{L^\infty}\|\Delta_{q}\hat{Z}\|^2_{L^2}.
\label{R-E21}
\end{eqnarray}
Let $\epsilon>0$ be a small number. Dividing (\ref{R-E21}) by
$(\|\Delta_{q}\hat{Z}\|^2_{L^2}+\epsilon)^{1/2}$, we obtain
\begin{eqnarray}
\frac{d}{dt}\Big(\|\Delta_{q}\hat{Z}\|^2_{L^2}+\epsilon\Big)^{1/2}\leq
C\|[\Delta_{q},\widetilde{A}^{0}(V)^{-1}\widetilde{A}^{j}(V)]\hat{Z}_{x_{j}}\|_{L^2}+
C\|\mathrm{div}\mathbb{A}(V)\|_{L^\infty}\|\Delta_{q}\hat{Z}\|_{L^2}.
\label{R-E22}
\end{eqnarray}
A time integration yields
\begin{eqnarray}
\|\Delta_{q}\hat{Z}\|_{L^2}&\leq&\Big(\|\Delta_{q}\hat{Z}\|^2_{L^2}+\epsilon\Big)^{1/2}\nonumber
\\&\leq&\Big(\|\Delta_{q}\hat{Z}_{0}\|^2_{L^2}+\epsilon\Big)^{1/2}
+C\int^{t}_{0}\|[\Delta_{q},\widetilde{A}^{0}(V)^{-1}\widetilde{A}^{j}(V)]\hat{Z}_{x_{j}}\|_{L^2}d\tau
\nonumber
\\&&+C\int^{t}_{0}\|\mathrm{div}\mathbb{A}(V)\|_{L^\infty}\|\Delta_{q}\hat{Z}\|_{L^2}d\tau.\label{R-E23}
\end{eqnarray}
Taking the limit $\epsilon\rightarrow0$ and using the estimates of
commutators and continuity for the composition in the stationary
case (see, e.g. \cite{BCD,X}), we arrive at
\begin{eqnarray}
2^{q\sigma}\|\Delta_{q}\hat{Z}\|_{L^{\infty}_{t}(L^2)}&\leq&2^{q\sigma}\|\Delta_{q}\hat{Z}_{0}\|_{L^2}
+C\int^{t}_{0}c_{q}(\tau)\|\nabla
V\|_{B^{\sigma-1}_{2,1}}\|\hat{Z}\|_{B^{\sigma}_{2,1}}d\tau\nonumber
\\&&+C\int^{t}_{0}2^{q\sigma}\|\mathrm{div}\mathbb{A}(V)\|_{L^\infty}\|\Delta_{q}\hat{Z}\|_{L^2}d\tau ,\label{R-E24}
\end{eqnarray}
where we used Lemma \ref{lem2.2} and $\|c_{q}(t)\|_{\ell^1}\leq1$,
for all $t\in[0,T]$. Summing up (\ref{R-E24}) on $q\geq-1$ implies
\begin{eqnarray}
\|\hat{Z}\|_{\widetilde{L}^{\infty}_{T}(B^{\sigma}_{2,1})}&\leq&\|\hat{Z}_{0}\|_{B^{\sigma}_{2,1}}
+C\int^{T}_{0}(\|\nabla
V\|_{B^{\sigma}_{2,1}}+\|\mathrm{div}\mathbb{A}(V)\|_{L^\infty})\|\hat{Z}(t)\|_{B^{\sigma}_{2,1}}dt\nonumber
\\&\leq&\|\hat{Z}_{0}\|_{B^{\sigma}_{2,1}}
+C\int^{T}_{0}(\|\nabla
V\|_{B^{\sigma}_{2,1}}+\|\mathrm{div}\mathbb{A}(V)\|_{L^\infty})\|\hat{Z}\|_{\widetilde{L}^{\infty}_{t}(B^{\sigma}_{2,1})}dt.\label{R-E25}
\end{eqnarray}
Then it follows from Gronwall's inequality that
\begin{eqnarray}
\|\hat{Z}\|_{\widetilde{L}^{\infty}_{T}(B^{\sigma}_{2,1})}&\leq&
\|\hat{Z}_{0}\|_{B^{\sigma}_{2,1}}\exp\Big\{C\int^{T}_{0}(\|\nabla
V\|_{B^{\sigma}_{2,1}}+\|\mathrm{div}\mathbb{A}(V)\|_{L^\infty})dt\Big\}\nonumber
\\&\leq&e^{CT(M_{1}+M_{2})}\|V_{0}-\bar{V}\|_{B^{\sigma}_{2,1}},\label{R-E26}
\end{eqnarray}
since $V(x,t)\in X^{\sigma}_{T}(\mathcal{O}_{1};M_{1},M_{2})$. Take
$T_{0}>0$ small such that
$$e^{CT(M_{1}+M_{2})}\leq2.$$ Then the right side of (\ref{R-E26})
is bounded by $2\|V_{0}-\bar{V}\|_{B^{\sigma}_{2,1}}$, so we choose
$M_{1}=2\|V_{0}-\bar{V}\|_{B^{\sigma}_{2,1}}$.

From the equations (\ref{R-E10}), we have
\begin{eqnarray}
\hat{V}_{t}+\sum_{j=1}^{d}\{\tilde{A}^{0}(V)^{-1}\tilde{A}^{j}(V)-\tilde{A}^{0}(\bar{V})^{-1}\tilde{A}^{j}(\bar{V})\}\hat{Z}_{x_{j}}=
-\tilde{A}^{0}(\bar{V})^{-1}\tilde{A}^{j}(\bar{V})\hat{Z}_{x_{j}},\label{R-E27}
\end{eqnarray}
which implies from Proposition \ref{prop2.2} that
\begin{eqnarray}
\|\hat{V}_{t}\|_{\widetilde{L}^{\infty}_{T_{0}}(B^{\sigma-1}_{2,1})}&\leq&
C\Big(\|\tilde{A}^{0}(V)^{-1}\tilde{A}^{j}(V)-\tilde{A}^{0}(\bar{V})^{-1}\tilde{A}^{j}(\bar{V})\|_{\widetilde{L}^{\infty}_{T_{0}}(B^{\sigma}_{2,1})}+1\Big)\|\hat{Z}_{x_{j}}\|_{\widetilde{L}^{\infty}_{T_{0}}(B^{\sigma-1}_{2,1})}
\nonumber
\\&\leq&C
\Big(\|V-\bar{V}\|_{\widetilde{L}^{\infty}_{T_{0}}(B^{\sigma}_{2,1})}+1\Big)\|V-\bar{V}\|_{\widetilde{L}^{\infty}_{T_{0}}(B^{\sigma}_{2,1})}\nonumber
\\&\leq&C(1+M_{1})M_{1},\label{R-E28}
\end{eqnarray}
so we take $M_{2}=C(1+M_{1})M_{1}$.

Finally, it can be shown that the solution $\hat{V}$ satisfies
(\ref{R-E13}).
Indeed, from $\hat{V}_t\in
\mathcal{C}([0,T_{0}],B^{\sigma-1}_{2,1})$ and the simply equality
$\hat{V}-V_{0}=\int^{t}_{0}\hat{V}_td\tau$, we deduce that
\begin{eqnarray}
|\hat{V}(t,x)-V_0(x)|\leq
T_{0}\sup_{t\in[0,T_{0}]}\|\hat{V}_t(t,\cdot)\|_{L^\infty}\leq
CT_{0}M_{2}.\label{R-E29}
\end{eqnarray}
Note that (\ref{R-E1200}), we take $T_{0}$ small so that
$CT_{0}M_{2}<d_{1}$ with
$d_{1}<d_{0}=\mbox{dist}(\mathcal{O}_{0},\partial \mathcal{O}_{V}),$
so $\hat{V}\in \mathcal{O}_{1}$ with
$\mathcal{O}_{1}:=d_{1}$-neighborhood of $\mathcal{O}_{0}$. This
completes the proof of Lemma \ref{lem3.1}.
\end{proof}

With the help of Lemma \ref{lem3.1}, we further establish the local
existence of (\ref{R-E5}) with (\ref{R-E9}).
\begin{prop}\label{prop3.1}
Assume that the initial date $V_{0}$ satisfies $V_{0}-\bar{V}\in
B^{\sigma}_{2,1}$ and (\ref{R-E1200}). Then
\begin{itemize}
\item[(i)] Existence:  there exists
a positive constant\, $T_{1}(\leq T_{0})$, depending only on
$\mathcal{O}_{0},d_{1}$ and $\|V_{0}-\bar{V}\|_{B^{\sigma}_{2,1}}$
such that the system (\ref{R-E5}) with (\ref{R-E9}) has a unique
solution $V\in X^{\sigma}_{T_{1}}(\mathcal{O}_{1};M_{1},M_{2}),$
where $\mathcal{O}_{1}, M_{1}, M_{2} $ are determined by Lemma
\ref{lem3.1}. In particular, the solution satisfies
$$V\in\mathcal{C}^{1}([0,T_{1}]\times \mathbb{R}^{d})$$
and
$$V-\bar{V}\in \widetilde{\mathcal{C}}_{T_{1}}(B^{\sigma}_{2,1})
\cap\widetilde{\mathcal{C}}^{1}_{T_{1}}(B^{\sigma-1}_{2,1}).
$$

\item[(ii)] Blow-up criterion: there exists a constant $C_{0}>0$
such that the maximal time $T^{*}$ of existence of such a solution
can be bounded from below by
$T^{*}\geq\frac{C_{0}}{\|V_{0}-\bar{V}\|_{B^{\sigma}_{2,1}}}.$
Moreover, if $T^{*}$ is finite, then
$$\limsup_{t\rightarrow T^{*}}\|V-\bar{V}\|_{B^{\sigma}_{2,1}}=\infty$$
if and only if $$\int^{T^{*}}_{0}\|\nabla
V(t)\|_{L^{\infty}}dt=\infty.$$
\end{itemize}
\end{prop}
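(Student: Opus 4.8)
The plan is to construct a solution of the quasilinear system (\ref{R-E5})--(\ref{R-E9}) by a classical iteration scheme whose building blocks are the linear estimates of Lemma \ref{lem3.1}, then to upgrade the convergence to the full critical norm by an interpolation (``loss of one derivative'') argument, and finally to derive the blow-up criterion from a refined energy estimate. First I would set up the iterates $V^{0}\equiv\bar V$ and, given $V^{n}\in X^{\sigma}_{T_{0}}(\mathcal O_{1};M_{1},M_{2})$, define $V^{n+1}$ as the solution of the linear Cauchy problem (\ref{R-E10})--(\ref{R-E11}) with coefficients frozen at $V^{n}$ (including the source term $\tilde H(V^{n})$, which is harmless since it is a lower-order term and satisfies the product/composition estimates of Propositions \ref{prop2.2}--\ref{prop2.3}). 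Lemma \ref{lem3.1} then shows, with the \emph{same} choices of $\mathcal O_{1}$, $M_{1}=2\|V_{0}-\bar V\|_{B^{\sigma}_{2,1}}$, $M_{2}=C(1+M_{1})M_{1}$, and $T_{0}$ shrunk once more to absorb the source term, that the whole sequence $\{V^{n}\}$ stays in the invariant set $X^{\sigma}_{T_{0}}(\mathcal O_{1};M_{1},M_{2})$; in particular it is uniformly bounded in $\widetilde{\mathcal C}_{T_{0}}(B^{\sigma}_{2,1})\cap\widetilde{\mathcal C}^{1}_{T_{0}}(B^{\sigma-1}_{2,1})$ and takes values in the fixed compact set $\overline{\mathcal O}_{1}$.

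Next I would prove that $\{V^{n}\}$ is a Cauchy sequence. The difference $\delta V^{n}:=V^{n+1}-V^{n}$ solves a linear system of the same type as (\ref{R-E16}) but with a right-hand side built from $(\tilde A^{j}(V^{n})-\tilde A^{j}(V^{n-1}))\partial_{x_{j}}V^{n}$ and $\tilde H(V^{n})-\tilde H(V^{n-1})$. Since the coefficients differences are quadratic-type terms estimated by $\|\delta V^{n-1}\|$ times the (bounded) norm of $V^{n}$, running the commutator/energy argument of Lemma \ref{lem3.1} at regularity $\sigma-1$ rather than $\sigma$ — this is the standard one-derivative loss that is unavoidable for quasilinear hyperbolic systems — yields
\begin{equation*}
\|\delta V^{n}\|_{\widetilde{L}^{\infty}_{T}(B^{\sigma-1}_{2,1})}\le C\,T\,\|\delta V^{n-1}\|_{\widetilde{L}^{\infty}_{T}(B^{\sigma-1}_{2,1})}
\end{equation*}
for $T\le T_{0}$, so for $T$ small enough $\{V^{n}\}$ converges in $\widetilde{\mathcal C}_{T}(B^{\sigma-1}_{2,1})$ to some $V$. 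Interpolating this strong convergence against the uniform $B^{\sigma}_{2,1}$ bound gives convergence in $\widetilde{\mathcal C}_{T}(B^{\sigma'}_{2,1})$ for every $\sigma'<\sigma$, which is more than enough to pass to the limit in the nonlinear terms (using Propositions \ref{prop2.2}--\ref{prop2.3} and the fact that all iterates stay in $\overline{\mathcal O}_{1}$) and identify $V$ as a solution of (\ref{R-E5}). Weak-$*$ compactness and the Fatou-type property of Chemin--Lerner norms give $V-\bar V\in\widetilde{L}^{\infty}_{T}(B^{\sigma}_{2,1})$; time continuity $V-\bar V\in\mathcal C([0,T],B^{\sigma}_{2,1})$ follows from the equation, the $B^{\sigma-1}_{2,1}$ continuity of $\partial_{t}V$, and a standard regularization/Bona--Smith argument. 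Uniqueness is the same $\sigma-1$ difference estimate plus Gronwall. Together with $B^{\sigma}_{2,1}\hookrightarrow\mathcal C^{1}$ (Lemma \ref{lem2.2}(5) with $p=2$, $\sigma=1+d/2$) this gives the classical regularity $V\in\mathcal C^{1}([0,T_{1}]\times\mathbb R^{d})$ and part (i).

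For part (ii), the lower bound $T^{*}\ge C_{0}/\|V_{0}-\bar V\|_{B^{\sigma}_{2,1}}$ comes directly from tracing the constants in Lemma \ref{lem3.1}: the only smallness requirement on $T$ is $e^{CT(M_{1}+M_{2})}\le 2$ together with $CTM_{2}<d_{1}$, and since $M_{1},M_{2}$ are controlled by $\|V_{0}-\bar V\|_{B^{\sigma}_{2,1}}$ (and its square), both hold once $T\lesssim 1/\|V_{0}-\bar V\|_{B^{\sigma}_{2,1}}$. For the continuation criterion, suppose $T^{*}<\infty$ and $\int_{0}^{T^{*}}\|\nabla V(t)\|_{L^{\infty}}\,dt<\infty$. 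Redo the $\Delta_{q}$-energy estimate \emph{on the nonlinear equation itself}: the troublesome term is the commutator $[\Delta_{q},\tilde A^{0}(V)^{-1}\tilde A^{j}(V)]\partial_{x_{j}}(V-\bar V)$, whose $L^{2}$ norm is bounded via Proposition \ref{prop2.4} (the case $s=1+d/p$, $p=2$) by $c_{q}2^{-q\sigma}\|\nabla V\|_{L^{\infty}}\|V-\bar V\|_{B^{\sigma}_{2,1}}$ up to composition factors depending only on $\|V-\bar V\|_{L^{\infty}}$, which is itself controlled by $\int\|\nabla V\|_{L^{\infty}}$; the $\mathrm{div}\,\mathbb A(V)$ term is likewise $\lesssim\|\nabla V\|_{L^{\infty}}\|V-\bar V\|_{B^{\sigma}_{2,1}}$. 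Summing in $q$ and applying Gronwall gives
\begin{equation*}
\|V(t)-\bar V\|_{B^{\sigma}_{2,1}}\le \|V_{0}-\bar V\|_{B^{\sigma}_{2,1}}\exp\Bigl(C\!\int_{0}^{t}\!(1+\|\nabla V(\tau)\|_{L^{\infty}})\,d\tau\Bigr),
\end{equation*}
so $\|V-\bar V\|_{B^{\sigma}_{2,1}}$ stays bounded as $t\to T^{*}$, and feeding this back into the blow-up lower bound lets one extend the solution past $T^{*}$, a contradiction; the converse implication is trivial since $B^{\sigma}_{2,1}\hookrightarrow W^{1,\infty}$. The delicate point of the whole argument — and the step I expect to cost the most care — is keeping the composition and product estimates (Propositions \ref{prop2.2}--\ref{prop2.4}) uniform with respect to the iteration, i.e. making sure every constant depends only on $\mathcal O_{1}$, $M_{1}$, $M_{2}$ (not on $n$), which is exactly why the invariant-set formulation of Lemma \ref{lem3.1} with its \emph{fixed} compact range $\overline{\mathcal O}_{1}$ is set up the way it is.
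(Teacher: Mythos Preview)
Your proposal is correct and follows essentially the same route as the paper: the same iteration scheme on the invariant set of Lemma \ref{lem3.1}, contraction in $\widetilde{\mathcal C}_{T}(B^{\sigma-1}_{2,1})$, Fatou property plus the $\sum_{q}2^{q\sigma}\|\Delta_{q}(V-\bar V)\|_{L^{2}}$ uniform summability to recover $\mathcal C_{T}(B^{\sigma}_{2,1})$, and Gronwall for uniqueness. The only differences are cosmetic: the paper actually drops the source term $\tilde H(V)$ in this section (it is irrelevant for local theory), and for part (ii) it simply refers to Iftimie and Chae \cite{I,C1} rather than writing out the $\|\nabla V\|_{L^{\infty}}$-based commutator/Gronwall argument you sketch.
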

\begin{proof}
Based on Lemma \ref{lem3.1}, we introduce the successive
approximation sequence $\{V^{n}(t,x)\}^{\infty}_{n=0}$ for the
Cauchy problem (\ref{R-E10})-(\ref{R-E11}) as follows:
$$V^{0}=\bar{V},$$
and for $n>0$,
\begin{equation}
\tilde{A}^{0}(V^{n})V^{n+1}_{t}+\sum_{j=1}^{d}\tilde{A}^{j}(V^{n})V^{n+1}_{x_{j}}=0\label{R-E30}
\end{equation}
with the initial data
\begin{equation}
V^{n+1}|_{t=0}=V_{0}(x).\label{R-E31}
\end{equation}
By Lemma \ref{lem3.1}, the sequence $\{V^{n}(t,x)\}^{\infty}_{n=0}$
is well defined on $Q_{T_{0}}$ for all $n\geq0$, and is uniformly
bounded with respect to $n\geq0$, i.e., $V^{n}\in
X^{\sigma}_{T_{0}}(\mathcal{O}_{1};M_{1},M_{2})$. Next, it will be
shown that $\{V^{n}(t,x)\}^{\infty}_{n=0}$ is a Cauchy sequence in
$\widetilde{\mathcal{C}}_{T_{0}}(B^{\sigma-1}_{2,1})$. Define
$\hat{V}^{n}=V^{n+1}-V^{n}$ for any $n\geq1$.

Take the difference between the equation (\ref{R-E30}) for the
$n+1$-th step and the $n$-th step to give
\begin{eqnarray}
&&\tilde{A}^{0}(V^{n})\hat{V}^{n}_{t}+\sum_{j=1}^{d}\tilde{A}^{j}(V^{n})\hat{V}^{n}_{x_{j}}
\nonumber\\
&&=-\tilde{A}^{0}(V^{n})\sum_{j=1}^{d}\{\tilde{A}^{0}(V^{n})^{-1}\tilde{A}^{j}(V^{n})-\tilde{A}^{0}(V^{n-1})^{-1}\tilde{A}^{j}(V^{n-1})\}V^{n}_{x_{j}}\label{R-E32}
\end{eqnarray}
subject to the initial data
\begin{equation}
\hat{V}^{n}|_{t=0}=0.\label{R-E33}
\end{equation}

Apply the operator $\Delta_{q}$ to (\ref{R-E32}) to get
\begin{eqnarray}
\tilde{A}^{0}(V^{n})\Delta_{q}\hat{V}^{n}_{t}+\sum_{j=1}^{d}\tilde{A}^{j}(V^{n})\Delta_{q}\hat{V}^{n}_{x_{j}}
=R_{1}+R_{2},\label{R-E34}
\end{eqnarray}
where
$$R_{1}:=-\sum_{j=1}^{d}\widetilde{A}^{0}(V^{n})[\Delta_{q},\widetilde{A}^{0}(V^{n})^{-1}\widetilde{A}^{j}(V^{n})]\hat{V}^{n}_{x_{j}},$$
$$R_{2}:=-\sum_{j=1}^{d}\Delta_{q}\Big\{\tilde{A}^{0}(V^{n})\Big(\tilde{A}^{0}(V^{n})^{-1}\tilde{A}^{j}(V^{n})-\tilde{A}^{0}(V^{n-1})^{-1}\tilde{A}^{j}(V^{n-1})\Big)V^{n}_{x_{j}}\Big\}.$$
Following from the similar steps as (\ref{R-E19})-(\ref{R-E23}), we
get up with
\begin{eqnarray}
\|\Delta_{q}\hat{V}^{n}\|_{L^2}\leq
C\int^{t}_{0}(\|R_{1}\|_{L^2}+\|R_{2}\|_{L^2})d\tau+C\int^{t}_{0}\|\mathrm{div}\mathbb{A}(V^{n})\|_{L^\infty}\|\Delta_{q}\hat{V}^{n}\|_{L^2}d\tau.\label{R-E35}
\end{eqnarray}
By multiplying the factor $2^{q(\sigma-1)}$ on both sides of the
inequality (\ref{R-E35}), we obtain
\begin{eqnarray}
&&2^{q(\sigma-1)}\|\Delta_{q}\hat{V}^{n}\|_{L^{\infty}_{t}(L^2)}\nonumber\\&\leq&
C\int^{t}_{0}c_{q}\|\nabla
V^{n}\|_{B^{\sigma-1}_{2,1}}\|\hat{V}^{n}\|_{B^{\sigma-1}_{2,1}}d\tau+C\int^{t}_{0}c_{q}\|
\hat{V}^{n-1}\|_{B^{\sigma-1}_{2,1}}\|V^{n}-\bar{V}\|_{B^{\sigma}_{2,1}}d\tau
\nonumber\\&&+C\int^{t}_{0}\|\mathrm{div}\mathbb{A}(V^{n})\|_{L^\infty}2^{q(\sigma-1)}\|\Delta_{q}\hat{V}^{n}\|_{L^2}d\tau,\
\ t\in[0,T_{0}], \label{R-E36}
\end{eqnarray}
where $\{c_{q}\}$ denotes some sequence which satisfies
$\|(c_{q})\|_{\ell^{1}}\leq1$.

Summing up (\ref{R-E36}) on $q\geq-1$, it is not difficult to get
\begin{eqnarray}
\|\hat{V}^{n}\|_{\widetilde{L}^{\infty}_{T_{0}}(B^{\sigma-1}_{2,1})}&\leq&
C\int^{T_{0}}_{0}\Big(\|V^{n}-\bar{V}\|_{B^{\sigma}_{2,1}}+\|\mathrm{div}\mathbb{A}(V^{n})\|_{L^\infty}\Big)\|\hat{V}^{n}\|_{\widetilde{L}^{\infty}_{t}(B^{\sigma-1}_{2,1})}dt
\nonumber\\&&+C\int^{T_{0}}_{0}\|V^{n}-\bar{V}\|_{B^{\sigma}_{2,1}}\|\hat{V}^{n-1}\|_{\widetilde{L}^{\infty}_{t}(B^{\sigma-1}_{2,1})}dt.\label{R-E37}
\end{eqnarray}
With the aid of Gronwall's inequality, we immediately deduce that
\begin{eqnarray}
&&\|\hat{V}^{n}\|_{\widetilde{L}^{\infty}_{T_{0}}(B^{\sigma-1}_{2,1})}\nonumber\\&\leq&
CT_{0}e^{C\int^{T_{0}}_{0}(\|V^{n}-\bar{V}\|_{B^{\sigma}_{2,1}}+\|\mathrm{div}\mathbb{A}(V^{n})\|_{L^\infty})dt}
\|V^{n}-\bar{V}\|_{\widetilde{L}^{\infty}_{T_{0}}(B^{\sigma}_{2,1})}\|\hat{V}^{n-1}\|_{\widetilde{L}^{\infty}_{T_{0}}(B^{\sigma-1}_{2,1})}
\nonumber\\&\leq&
CM_{1}T_{0}e^{CT_{0}(M_{1}+M_{2})}\|\hat{V}^{n-1}\|_{\widetilde{L}^{\infty}_{T_{0}}(B^{\sigma-1}_{2,1})}.\label{R-E38}
\end{eqnarray}
Take $T_{1}$ so small that
$$T_{1}\leq T_{0}\ \ \mbox{and} \ \ CM_{1}T_{1}e^{CT_{1}(M_{1}+M_{2})}\leq\frac{1}{2}.$$
Then it follows from (\ref{R-E38}) that $(V^{n}-\bar{V})$ is a
Cauchy sequence in
$\widetilde{\mathcal{C}}_{T_{1}}(B^{\sigma-1}_{2,1})$. There exists
a function $V(t,x)$ with
$V-\bar{V}\in\widetilde{\mathcal{C}}_{T_{1}}(B^{\sigma-1}_{2,1})$
such that $(V^{n}-V)\rightarrow 0$ strongly in
$\widetilde{\mathcal{C}}_{T_{1}}(B^{\sigma-1}_{2,1})$ as
$n\rightarrow\infty$. On the other hand it follows from Fatou's
property that the conditions (\ref{R-E14})-(\ref{R-E15}) are
satisfied, since $V^{n}$ is uniformly bounded in the space
$X^{\sigma}_{T_{0}}(\mathcal{O}_{1};M_{1},M_{2})\subset
X^{\sigma}_{T_{1}}(\mathcal{O}_{1};M_{1},M_{2})$. The property of
strong convergence enable us to pass to the limits in the system
(\ref{R-E30})-(\ref{R-E31}) and conclude that $V$ is a solution to
(\ref{R-E5}) and (\ref{R-E9}). What remains is to check that
$V-\bar{V}\in \mathcal{C}([0,T_{1}],B^{\sigma}_{2,1})$. Indeed, we
easily achieve that the map
$t\mapsto\|\Delta_{q}(V(t)-\bar{V})\|_{L^2}$ is continuous on
$[0,T_{1}]$, since $V-\bar{V}\in
\mathcal{C}([0,T_{1}];B^{\sigma-1}_{2,1})$. Then we have
$\Delta_{q}(V(t)-\bar{V})\in
\mathcal{C}([0,T_{1}];B^{\sigma}_{2,1})$ for all $q\geq-1$. Note
that $V-\bar{V}\in\widetilde{L}^\infty_{T_{1}}(B^{\sigma}_{2,1})$,
the series
$\sum_{q\geq-1}2^{q\sigma}\|\Delta_{q}(V(t)-\bar{V})\|_{L^2}$
converges uniformly on $[0,T_{1}]$, which yields $V-\bar{V}\in
\mathcal{C}([0,T_{1}];B^{\sigma}_{2,1})$. Moreover it follows that
$V_{t}\in\mathcal{C}([0,T_{1}];B^{\sigma-1}_{2,1})$, which implies
the condition (\ref{R-E13}) immediately. Hence, the local existence
part of solutions is complete.

Concerning the uniqueness, we set $\tilde{V}=V_{1}-V_{2}$, where
$V_{1}$ and $V_{2}$ are two solutions to the system (\ref{R-E5})
subject to the same initial data, respectively. Then the error
solution $\tilde{V}$ satisfies
\begin{eqnarray}
&&\tilde{A}^{0}(V_{1})\tilde{V}_{t}+\sum_{j=1}^{d}\tilde{A}^{j}(V_{1})\tilde{V}_{x_{j}}
\nonumber\\
&&=-\tilde{A}^{0}(V_{1})\sum_{j=1}^{d}\{\tilde{A}^{0}(V_{1})^{-1}\tilde{A}^{j}(V_{1})-\tilde{A}^{0}(V_{2})^{-1}\tilde{A}^{j}(V_{2})\}V_{2x_{j}}.\label{R-E39}
\end{eqnarray}
As above, following from the proof of Cauchy sequence, we obtain the
inequality
\begin{eqnarray}
\|\tilde{V}\|_{\widetilde{L}^{\infty}_{T_{1}}(B^{\sigma-1}_{2,1})}&\leq&
C\int^{T_{1}}_{0}\Big(\|V_{1}-\bar{V}\|_{B^{\sigma}_{2,1}}+\|V_{2}-\bar{V}\|_{B^{\sigma}_{2,1}}\Big)\|\tilde{V}\|_{\widetilde{L}^{\infty}_{t}(B^{\sigma-1}_{2,1})}dt
.\label{R-E40}
\end{eqnarray}
Gronwall's inequality implies $\tilde{V}=0$ immediately, i.e., the
uniqueness of solution $V$ is achieved. The blow-up criterion
follows from the works of Iftimie and Chae \cite{I,C1} directly, we
omit the details. This finishes the proof of Proposition
\ref{prop3.1}.
\end{proof}

\section{\textit{A priori} estimate and global existence}\setcounter{equation}{0} \label{sec:4}
To show that the classical solutions in Proposition \ref{prop3.1}
are globally defined, in this section, the central task is to
construct further \textit{a priori} estimates based on the
dissipative mechanism produced by the source term.

 To this end,
we set
$$E(T):=\|V-\bar{V}\|_{\widetilde{L}^\infty_{T}(B^{\sigma}_{2,1})}$$
and
$$D(T):=\|(I-\mathcal{P})V\|_{\widetilde{L}^2_{T}(B^{\sigma}_{2,1})}+\|\nabla
V\|_{\widetilde{L}^2_{T}(B^{\sigma-1}_{2,1})}$$ for any time $T>0$.

Our \textit{a priori} estimate reads as follows.

\begin{prop}\label{prop4.1}
If $V-\bar{V}\in \widetilde{\mathcal{C}}_{T}(B^{\sigma}_{2,1})\cap
\widetilde{\mathcal{C}}^1_{T}(B^{\sigma-1}_{2,1})$ is a solution of
(\ref{R-E5}) and (\ref{R-E9}) for any $T>0$, then there exist some
positive constants $\delta_{1}, \mu_{1}$ and $C_{1}$, if $E(T)\leq
\delta_{1}$, then
\begin{eqnarray}&&E(T)+\mu_{1}D(T)
\leq C_{1}\|V_{0}-\bar{V}\|_{B^{\sigma}_{2,1}}.\label{R-E41}
\end{eqnarray}
\end{prop}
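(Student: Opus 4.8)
The plan is to derive the a priori estimate \eqref{R-E41} in three steps, matching the dissipative structure: an $L^2$-type entropy estimate, a higher-order estimate of the non-equilibrium component $(I-\mathcal{P})V$, and a recovery of the full dissipation of $\nabla V$ via the compensating matrix $\tilde{K}(\omega)$ from Theorem \ref{thm1.3}. Throughout I work with the normal-form system \eqref{R-E5} written as a perturbation of the linearization \eqref{R-E6} around $\bar{V}\in\mathcal{M}$, i.e. $\tilde{A}^0\,\partial_t(V-\bar V)+\sum_j\tilde{A}^j\,\partial_{x_j}(V-\bar V)+L(V-\bar V)=f$, where $\tilde{A}^0,\tilde{A}^j,L$ are the constant coefficients at $\bar V$ and the right-hand side $f$ collects all quadratic terms: the coefficient differences $(\tilde{A}^0(V)-\tilde A^0)\partial_tV$, $(\tilde{A}^j(V)-\tilde A^j)\partial_{x_j}V$, and the nonlinear part of the source via Corollary \ref{cor1.1}, namely $\tilde{H}(V)+L(V-\bar V)=\tilde r(V)$ with $|\tilde r(V)|\le C|V-\bar V||(I-\mathcal{P})V|$. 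These terms are all at least quadratic, so in Chemin--Lerner spaces at the critical index $\sigma=1+d/2$ (an algebra) they are controlled by $E(T)\,D(T)$ or $E(T)^2$ type products, using Propositions \ref{prop2.2}, \ref{prop2.3} and the commutator estimates of Proposition \ref{prop2.4}.

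Step 1 (basic/entropy estimate). Apply $\Delta_q$ to the system, take the $L^2$ inner product with $\tilde{A}^0(V)\Delta_q(V-\bar V)$ or rather form $\langle\tilde A^0(V)\Delta_q(V-\bar V),\Delta_q(V-\bar V)\rangle$ as in \eqref{R-E18}--\eqref{R-E19}, and use the dissipativity of the source, $\langle V,\tilde H(V)\rangle\le -C|(I-\mathcal{P})V|^2$ (Corollary \ref{cor1.1}), to obtain for each block $\frac{d}{dt}\|\Delta_q(V-\bar V)\|_{L^2_{\tilde A^0}}^2+c\|\Delta_q(I-\mathcal{P})V\|_{L^2}^2$ bounded by commutator terms plus $\|\mathrm{div}\,\mathbb{A}(V)\|_{L^\infty}\|\Delta_q(V-\bar V)\|_{L^2}^2$ plus the contribution of $\tilde r(V)$. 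Multiplying by $2^{2q\sigma}$, summing, and using $\tilde\ell^1$ summability, this yields $E(T)^2+D_0(T)^2\lesssim \|V_0-\bar V\|_{B^\sigma_{2,1}}^2+(E(T)+\text{small})\,D(T)^2$ where $D_0(T)=\|(I-\mathcal{P})V\|_{\widetilde L^2_T(B^\sigma_{2,1})}$; note this step does \emph{not} see $\mathcal{P}V$ at low frequency — that is exactly why Chemin--Lerner spaces and the homogeneous-norm trick of Proposition \ref{prop6.1}/Corollary \ref{cor6.1} are needed to later close the estimate without a spurious $\Delta_{-1}\mathcal{P}V$ term.

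Step 2 (dissipation of $(I-\mathcal{P})V$ with derivatives) and Step 3 (dissipation of $\nabla V$). For Step 2, project the equation onto $\mathcal{M}^\perp$; since $L$ restricted there is coercive, one gets $\|(I-\mathcal{P})V\|_{\widetilde L^2_T(B^\sigma_{2,1})}$ controlled by the $L^2$ quantity from Step 1 together with $\|\nabla V\|_{\widetilde L^2_T(B^{\sigma-1}_{2,1})}$ and quadratic remainders. For Step 3, following the Shizuta--Kawashima/Umeda--Kawashima--Shizuta device, work in Fourier (frequency-localized) variables: use the skew-symmetry of $\tilde K(\omega)\tilde A^0$ and the positivity of $[\tilde K(\omega)\tilde A(\omega)]'+L$ from Theorem \ref{thm1.3}(iii) to form the modified functional $\mathcal{E}_q := \|\Delta_q(V-\bar V)\|_{L^2_{\tilde A^0}}^2 + \kappa\,2^{-q}\,\mathrm{Re}\langle i\,\tilde K(\omega)\widehat{\Delta_q(V-\bar V)},\widehat{\Delta_q(V-\bar V)}\rangle$ for a small $\kappa>0$; its time derivative produces the missing dissipation $\sim|\xi|^2/(1+|\xi|^2)\,|\widehat{\Delta_q(V-\bar V)}|^2$, i.e. $2^{2q}$ on each block at frequency $\sim 2^q$, giving $\|\nabla V\|_{\widetilde L^2_T(B^{\sigma-1}_{2,1})}$. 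Combining the three steps with suitably small coupling constants $\mu_1$, absorbing all quadratic right-hand sides (which carry a factor $E(T)\le\delta_1$) into the left, and applying Corollary \ref{cor6.1} to handle the low-frequency block of $\mathcal{P}V$, yields \eqref{R-E41}.

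The main obstacle is Step 3 in the Chemin--Lerner / critical-Besov setting: the compensating matrix $\tilde K(\omega)$ depends on the angular variable $\omega=\xi/|\xi|$, so the modified functional is defined on the Fourier side within each dyadic shell rather than by a clean physical-space symmetrizer, and one must carefully track that the cross term $2^{-q}\langle i\tilde K\hat u,\hat u\rangle$ is a genuine lower-order perturbation of $\|\Delta_q u\|_{L^2}^2$ uniformly in $q$, and that after multiplication by $2^{2q\sigma}$ and $\tilde\ell^1$-summation the borderline nature of $\sigma=1+d/2$ still allows absorption of the quadratic terms — this is precisely where the relation between homogeneous and inhomogeneous Chemin--Lerner norms (Proposition \ref{prop6.1}) and the low-frequency handling of $\mathcal{P}V$ (Corollary \ref{cor6.1}) must be invoked, since the pure energy bound of Step 1 leaves $\Delta_{-1}\mathcal{P}V$ undamped. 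The commutator estimates of Proposition \ref{prop2.4}, which are exactly tuned to $s=1+d/p$, are the other delicate point, but they are quotable as stated.
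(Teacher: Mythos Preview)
Your outline is correct and follows essentially the same three-ingredient strategy as the paper: an entropy/energy estimate producing the dissipation of $(I-\mathcal{P})V$, and then recovery of $\nabla V$ dissipation via the compensating matrix $\tilde K(\omega)$ from Theorem~\ref{thm1.3}. The main organizational difference is that the paper does \emph{not} split as you do. Its Step~1 (Lemma~\ref{lem4.1}) is a plain $L^2$ entropy estimate with no frequency localization at all, giving $\|V-\bar V\|_{L^\infty_T(L^2)}+\|(I-\mathcal{P})V\|_{L^2_T(L^2)}\le C\|V_0-\bar V\|_{L^2}$; its Step~2 (Lemma~\ref{lem4.2}) is the frequency-localized energy estimate carried out with the \emph{homogeneous} blocks $\dot\Delta_q$, yielding control in $\widetilde L^\infty_T(\dot B^\sigma_{2,1})$ and $\widetilde L^2_T(\dot B^\sigma_{2,1})$; the two are then glued via Corollary~\ref{cor6.1} to produce the inhomogeneous estimate \eqref{R-E57}. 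Your ``Step~2'' (projecting onto $\mathcal{M}^\perp$ to extract $(I-\mathcal{P})V$ dissipation) is redundant---that dissipation already comes out of the energy identity through the term $\langle L\dot\Delta_q V,\dot\Delta_q V\rangle\ge c\,|(I-\mathcal{P})\dot\Delta_qV|^2$. For Step~3 the paper (Lemma~\ref{lem4.3}) does not build a Lyapunov functional $\mathcal{E}_q$; it simply multiplies the localized Fourier-side equation by $-i|\xi|\tilde K(\omega)$, integrates in time, and bounds the resulting cross term $|\xi|\,\mathrm{Im}\langle\tilde K\tilde A^0\widehat{\Delta_qZ},\widehat{\Delta_qZ}\rangle\big|_0^t$ pointwise by $C\,2^{2q}(\|\Delta_qZ\|_{L^\infty_T(L^2)}^2+\|\Delta_qZ_0\|_{L^2}^2)$, which after multiplication by $2^{2q(\sigma-1)}$ is absorbed by $E(T)^2+\|V_0-\bar V\|_{B^\sigma_{2,1}}^2$. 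This avoids having to check that your perturbation term $\kappa\,2^{-q}\langle i\tilde K\hat u,\hat u\rangle$ is lower order uniformly in $q$, and in particular sidesteps any separate treatment of $q=-1$. Both routes close, but the paper's $L^2$--plus--homogeneous split is precisely what makes the invocation of Proposition~\ref{prop6.1}/Corollary~\ref{cor6.1} clean, whereas in your presentation the role of that decomposition is acknowledged but not structurally reflected in the steps.
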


Thanks to the standard continuation argument, we extend the
local-in-time solutions in Proposition \ref{prop3.1} with the aid of
Proposition \ref{prop4.1}, and obtain the global existence of
classical solutions to the system (\ref{R-E5}) and (\ref{R-E9}),
here we omit details,  see, e.g., \cite{MN}. Then it follows from
Theorem \ref{thm1.1} and Proposition \ref{prop2.3} that $U\in
\mathcal{C}^{1}(\mathbb{R}^{+}\times \mathbb{R}^{d})$ is the
global-in-time classical solutions of (\ref{R-E1})-(\ref{R-E2}).
Furthermore, we arrive at Theorem \ref{thm1.5}.

Actually, the proof of Proposition \ref{prop4.1} is to capture the
dissipation rates from contributions of $(I-\mathcal{P})V$ and
$\nabla V$ in turn by using the frequency-localization method. For
clarity, we divide it into several lemmas.

\begin{lem}\label{lem4.1}
If $V-\bar{V}\in \widetilde{\mathcal{C}}_{T}(B^{\sigma}_{2,1})\cap
\widetilde{\mathcal{C}}^1_{T}(B^{\sigma-1}_{2,1})$ is a solution of
(\ref{R-E5}) and (\ref{R-E9}) for any $T>0$,  then the following
estimate holds:
\begin{eqnarray}
\|V-\bar{V}\|_{L^\infty_{T}(L^2)}+\|(I-\mathcal{P})V\|_{L^2_{T}(L^2)}\leq
C\|V_{0}-\bar{V}\|_{L^2}. \label{R-E42}
\end{eqnarray}
where $C$ is a uniform positive constant independent of \ $T$.
\end{lem}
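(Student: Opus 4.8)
The plan is to derive the basic $L^2$ estimate directly from the entropy structure of the normal form \eqref{R-E5}, rather than from any frequency decomposition. First I would use Corollary \ref{cor1.1}: writing $Z=V-\bar V$ and recalling $\tilde H(V)=-LV+\tilde r(V)$ with $L=L(\bar W)$ and $\bar V\in\mathcal M$ (so $L\bar V=0$), the equation \eqref{R-E5} becomes
$$\tilde A^{0}(V)Z_{t}+\sum_{j=1}^{d}\tilde A^{j}(V)Z_{x_{j}}=-LZ+\tilde r(V).$$
Taking the $L^2$ inner product with $Z$ and integrating over $\mathbb R^{d}$, the symmetry of $\tilde A^{j}(V)$ lets me integrate the flux terms by parts so that they contribute only lower-order terms involving $\mathrm{div}\,\mathbb A(V)=\tilde A^{0}(V)_{t}+\sum_j\tilde A^{j}(V)_{x_j}$, which is controlled by $\|\partial_t V\|_{L^\infty}+\|\nabla V\|_{L^\infty}\lesssim E(T)$ via the embedding $B^{\sigma-1}_{2,1}\hookrightarrow L^\infty$. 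The symmetric positive definite matrix $\tilde A^{0}(V)$ gives the natural energy $\|Z\|^2_{L^2_{\tilde A^0}}\approx\|Z\|^2_{L^2}$ provided $V$ stays in a compact subset of $\mathcal O_V$, which holds once $E(T)\le\delta_1$ is small.

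The dissipation comes from the term $-\langle LZ,Z\rangle$. By Corollary \ref{cor1.1} (or equivalently Proposition \ref{prop1.1}) the quadratic form $\langle V,\tilde H(V)\rangle\le -c|(I-\mathcal P)V|^2$ for $V$ near $\bar V$; combined with $L\bar V=0$ this yields $-\langle LZ,Z\rangle+\langle\tilde r(V),Z\rangle\le -c\|(I-\mathcal P)V\|^2_{L^2}+(\text{higher order})$ after integration, since $\mathcal P$ is the orthogonal projection onto $\mathcal M=\ker L$ and $(I-\mathcal P)Z=(I-\mathcal P)V$ (because $\bar V\in\mathcal M$). The remainder $\tilde r(V)$ satisfies $|\tilde r(V)|\le C|V-\bar V||(I-\mathcal P)V|$, so $\int\langle\tilde r(V),Z\rangle\,dx\le C\|V-\bar V\|_{L^\infty}\|(I-\mathcal P)V\|_{L^2}\|Z\|_{L^2}\le C\,E(T)\big(\|(I-\mathcal P)V\|^2_{L^2}+\|Z\|^2_{L^2}\big)$, which is absorbed for $E(T)$ small. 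Altogether I obtain a differential inequality of the form
$$\frac{d}{dt}\|Z\|^2_{L^2}+c\|(I-\mathcal P)V\|^2_{L^2}\le C\,E(T)\big(\|(I-\mathcal P)V\|^2_{L^2}+\|Z\|^2_{L^2}\big),$$
and after shrinking $\delta_1$ so that $C\,E(T)\le c/2$, the dissipative term survives while the $\|Z\|^2_{L^2}$ term on the right is also small; integrating in time from $0$ to $T$ and using $\|Z(0)\|_{L^2}=\|V_0-\bar V\|_{L^2}$ gives \eqref{R-E42}, with a constant uniform in $T$ because there is no exponentially growing Gronwall factor left (the surviving bad term has a small coefficient and can be absorbed into the left-hand side after a further application of Gronwall, or handled directly since $E(T)$ is assumed bounded a priori).

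The main obstacle I anticipate is making the energy identity rigorous at the level of regularity $V-\bar V\in\widetilde{\mathcal C}_T(B^{\sigma}_{2,1})\cap\widetilde{\mathcal C}^1_T(B^{\sigma-1}_{2,1})$: the product $\langle\tilde A^{0}(V)Z_t,Z\rangle$ and the integrations by parts require enough smoothness to justify, and strictly one should localize by $\Delta_q$, perform the estimate on each block, and sum — exactly the commutator-type argument already carried out in \eqref{R-E19}–\eqref{R-E26} of Lemma \ref{lem3.1}. A cleaner route is to note $\sigma-1=d/2\ge 0$, so $B^{\sigma}_{2,1}\hookrightarrow L^2$ and $B^{\sigma-1}_{2,1}\hookrightarrow L^\infty$, and the solution is genuinely $\mathcal C^1$ in $(t,x)$ by Theorem \ref{thm1.4}, so the $L^2$ energy computation is classical; the only care needed is that coefficient terms like $\mathrm{div}\,\mathbb A(V)$ are estimated in $L^\infty_T(L^\infty)$ by $E(T)$ and that the smallness of $E(T)$ is used both to keep $\tilde A^0(V)$ uniformly positive definite and to absorb the nonlinear remainder. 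No $\widetilde L^\rho_T$ refinement is needed here — that subtlety enters only in the higher-regularity Lemmas that capture the full $B^\sigma_{2,1}$ dissipation of $(I-\mathcal P)V$ and $\nabla V$.
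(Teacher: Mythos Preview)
Your approach has a genuine gap that the paper's proof avoids. When you take the $L^2$ inner product of the symmetric form with $Z=V-\bar V$, the energy you obtain is $\tfrac12\langle\tilde A^0(V)Z,Z\rangle$, and differentiating it in time necessarily produces the term $\tfrac12\int\langle\mathrm{div}\,\mathbb A(V)\,Z,Z\rangle\,dx$. Your differential inequality therefore reads, after absorbing the remainder $\tilde r(V)$,
\[
\frac{d}{dt}\|Z\|^2_{L^2}+c\,\|(I-\mathcal P)V\|^2_{L^2}\le C\,\|\mathrm{div}\,\mathbb A(V)\|_{L^\infty}\|Z\|^2_{L^2}.
\]
The right-hand side cannot be absorbed into the left: there is no dissipation on $\mathcal P Z$, so Gronwall gives the factor $\exp\!\big(C\!\int_0^T\|\mathrm{div}\,\mathbb A(V)\|_{L^\infty}\,dt\big)$. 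Under $E(T)\le\delta_1$ you only know $\|\mathrm{div}\,\mathbb A(V)(t)\|_{L^\infty}\lesssim\delta_1$ pointwise in $t$, which yields $\exp(C\delta_1 T)$; using instead the dissipation $D(T)$ you get at best $\int_0^T\|\nabla V\|_{L^\infty}\,dt\le\sqrt T\,\|\nabla V\|_{L^2_T(L^\infty)}\lesssim\sqrt T\,D(T)$. Either way the constant depends on $T$, so \eqref{R-E42} with $C$ uniform in $T$ does not follow. Your claim that ``there is no exponentially growing Gronwall factor left'' is exactly where the argument breaks.

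The paper circumvents this by working not with the quadratic energy $\langle\tilde A^0(V)Z,Z\rangle$ but with the relative entropy $\hat\eta(U)=\eta(U)-\eta(\bar U)-\langle D_U\eta(\bar U),U-\bar U\rangle$, which is equivalent to $|V-\bar V|^2$ near $\bar V$. Because $(\eta,q^j)$ is an entropy--entropy flux pair for \eqref{R-E1}, one has the \emph{exact} identity $\hat\eta(U)_t+\sum_j\hat q^j(U)_{x_j}=\langle W,H(W)\rangle$, with no lower-order coefficient terms whatsoever. Integrating over $\mathbb R^d$ kills the flux divergence outright, and Proposition~\ref{prop1.1} gives $\langle W,H(W)\rangle\le -c|(I-\mathcal P)V|^2$; integrating in time then yields \eqref{R-E42} immediately, with no Gronwall step and a constant genuinely independent of $T$. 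The trick in Lemma~\ref{lem4.2} of trading $\|\dot\Delta_q Z\|_{L^2_t(L^2)}$ for $\|\nabla V\|_{\widetilde L^2_t(\dot B^{\sigma-1}_{2,1})}$ via Bernstein is unavailable here precisely because the plain $L^2$ norm carries the zero-frequency contribution, where no gradient is gained.
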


\begin{proof}
It follows from Remark \ref{rem2.2} and Lemma \ref{lem2.2} that
\begin{eqnarray}
\sup_{0\leq t\leq T}\|V(t,\cdot)-\bar{V}\|_{L^\infty}\leq CE(T)\leq
C\delta_{1}, \label{R-E43}
\end{eqnarray}
so $V(t,x)$ takes values in a neighborhood of
$\bar{V}\in\mathcal{M}$.

Let $\bar{U}\in \mathcal{E}$ be the constant state corresponding to
$\bar{V}\in \mathcal{M}$. Denote the relative entropy function
$\hat{\eta}(U)$ by
$$\hat{\eta}(U):=\eta(U)-\eta(\bar{U})-\langle D_{U}\eta(\bar{U}), U-\bar{U}\rangle.$$
Then the strictly convex quantity $\hat{\eta}(U)$ satisfies
$$\hat{\eta}(U)\geq0,\ \ \hat{\eta}(\bar{U})=0,\ \ \ D_{U}\hat{\eta}(\bar{U})=0.$$
Furthermore, $\hat{\eta}(U)$ is equivalent to the quadratic function
$|U-\bar{U}|^2$ and hence to $|V-\bar{V}|^2$, since $\hat{\eta}(U)$
is strictly convex in $U$. On the other hand, from (\ref{R-E1}), we
get the entropy-entropy flux equation
\begin{eqnarray}
\hat{\eta}(U)_{t}+\sum_{j=1}^{d}\hat{q}^{j}(U)_{x_{j}}=\langle
D_{U}\eta(U), G(U)\rangle \label{R-E44}
\end{eqnarray}
with
$\hat{q}^{j}(U)(j=1,\cdot\cdot\cdot,d)=q^{j}(U)-q^{j}(\bar{U})-\langle
D_{U}\eta(\bar{U}), F^{j}(U)-F^{j}(\bar{U})\rangle$, where
$q^{j}(U)$ is the associated entropy flux with the entropy
$\eta(U)$. It follows from (\ref{R-E3}) and Proposition
\ref{prop1.1} that
\begin{eqnarray}
\langle D_{U}\eta(U), G(U)\rangle=\langle W, H(W)\rangle\leq
-C|(I-\mathcal{P})W|^2\leq -C|(I-\mathcal{P})V|^2, \label{R-E45}
\end{eqnarray}
where we used the fact that $W\in \mathcal{M}$ is equivalent to
$V\in \mathcal{M}$. Integrating (\ref{R-E45}) over $[0,T]\times
\mathbb{R}^{d}$ implies the desired inequality (\ref{R-E42})
immediately.
\end{proof}

\begin{lem}\label{lem4.2}
If $V-\bar{V}\in \widetilde{\mathcal{C}}_{T}(B^{\sigma}_{2,1})\cap
\widetilde{\mathcal{C}}^1_{T}(B^{\sigma-1}_{2,1})$ is a solution of
(\ref{R-E5}) and (\ref{R-E9}) for any $T>0$,  then the following
estimate holds:
\begin{eqnarray}
&&\|V-\bar{V}\|_{\widetilde{L}^{\infty}_{T}(\dot{B}^{\sigma}_{2,1})}+\|(I-\mathcal{P})V\|_{\widetilde{L}^{2}_{T}(\dot{B}^{\sigma}_{2,1})}\nonumber\\&\leq&
C\|V_{0}-\bar{V}\|_{\dot{B}^{\sigma}_{2,1}}+C\sqrt{\|V-\bar{V}\|_{\widetilde{L}^{\infty}_{T}(\dot{B}^{\sigma}_{2,1})}}\Big(\|\nabla
V\|_{\widetilde{L}^2_{T}(\dot{B}^{\sigma-1}_{2,1})}\nonumber\\&&+\|(I-\mathcal{P})V\|_{\widetilde{L}^2_{T}(\dot{B}^{\sigma-1}_{2,1})}
+\|(I-\mathcal{P})V\|_{\widetilde{L}^{2}_{T}(\dot{B}^{\sigma}_{2,1})}\Big).
\label{R-E46}
\end{eqnarray}
where $C$ is a uniform positive constant independent of \ $T$.
\end{lem}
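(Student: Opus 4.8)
The goal is a high-frequency (homogeneous) estimate with the dissipation of $(I-\mathcal{P})V$ at regularity level $\sigma$. The plan is to mimic the structure of the proof of Lemma \ref{lem3.1}, but now retaining the source term $\tilde H(V)$ and exploiting Corollary \ref{cor1.1}. First I would apply $\dot{\Delta}_q$ to the normal-form system (\ref{R-E5}), written for $Z:=V-\bar V$ as $\tilde A^0(V)Z_t+\sum_j \tilde A^j(V)Z_{x_j}=\tilde H(V)$, obtaining
\begin{equation*}
\tilde A^0(V)\dot\Delta_q Z_t+\sum_{j=1}^d \tilde A^j(V)\dot\Delta_q Z_{x_j}
=\dot\Delta_q\tilde H(V)-\sum_{j=1}^d \tilde A^0(V)\big[\dot\Delta_q,\tilde A^0(V)^{-1}\tilde A^j(V)\big]Z_{x_j}.
\end{equation*}
Taking the $L^2_{\tilde A^0}$ inner product with $\dot\Delta_q Z$, integrating over $\mathbb{R}^d$, and using $C^{-1}I\le\tilde A^0(V)\le CI$ together with the skew-adjointness of the $\tilde A^j$-transport part up to a $\mathrm{div}\,\mathbb{A}(V)$ commutator term, I get an energy identity for $\|\dot\Delta_q Z\|_{L^2}$ whose right-hand side involves (i) the commutator $\big[\dot\Delta_q,\tilde A^0(V)^{-1}\tilde A^j(V)\big]Z_{x_j}$, (ii) the $\mathrm{div}\,\mathbb{A}(V)$ term, and (iii) the source contribution $\langle \tilde A^0(V)^{-1}\dot\Delta_q\tilde H(V),\dot\Delta_q Z\rangle$.

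The decisive point is the source term. Using Corollary \ref{cor1.1}, write $\tilde H(V)=-L Z+\tilde r(V)$ with $L=L(\bar W)$ nonnegative on $\mathcal{M}^\perp$ and $\tilde r(V)\in\mathcal{M}^\perp$ satisfying $|\tilde r(V)|\le C|V-\bar V||(I-\mathcal{P})V|$. The linear part $-LZ$, after the inner product and since $L$ vanishes on $\mathcal{M}$ and is positive definite on $\mathcal{M}^\perp=\mathrm{Ran}(I-\mathcal{P})$, produces the good dissipation term $\gtrsim \|(I-\mathcal{P})\dot\Delta_q Z\|_{L^2}^2$ on the left; here one uses that $\dot\Delta_q$ commutes with the constant-coefficient projection $\mathcal{P}$. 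The remainder $\tilde r(V)$ is quadratic: by Proposition \ref{prop2.2} (product law) and Proposition \ref{prop2.3} (composition, since $\tilde r$ is smooth and vanishes to second order), $\|\tilde r(V)\|_{\dot B^{\sigma-1}_{2,1}}\lesssim \|V-\bar V\|_{\dot B^\sigma_{2,1}}\|(I-\mathcal{P})V\|_{\dot B^{\sigma-1}_{2,1}}$ (using $\sigma-1=d/2$ is the critical algebra index is borderline, so one may instead keep $\tilde r$ at level $\dot B^{\sigma-1}$ paired against $\dot\Delta_q Z_{x_j}$-type quantities, or split it as a product of one factor in $\dot B^\sigma$ and one in $L^\infty$); the $\sqrt{\|V-\bar V\|_{\widetilde L^\infty_T(\dot B^\sigma_{2,1})}}$ prefactor in (\ref{R-E46}) will come from extracting one power of $\|Z\|_{\widetilde L^\infty_T(\dot B^\sigma)}^{1/2}$ after a Cauchy--Schwarz in the time-integrated estimate. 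For the commutator (i) I would invoke Proposition \ref{prop2.4} with $s=\sigma=1+d/2$: $\|[\dot\Delta_q,\tilde A^0(V)^{-1}\tilde A^j(V)]Z_{x_j}\|_{L^2}\le Cc_q2^{-q\sigma}\|\nabla(\tilde A^0(V)^{-1}\tilde A^j(V))\|_{\dot B^{\sigma-1}_{2,1}}\|Z\|_{\dot B^\sigma_{2,1}}$, and by the composition estimate the first norm is $\lesssim\|\nabla V\|_{\dot B^{\sigma-1}_{2,1}}$; similarly $\|\mathrm{div}\,\mathbb{A}(V)\|_{L^\infty}\lesssim\|\nabla V\|_{\dot B^{\sigma-1}_{2,1}}$ by Bernstein and composition.

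After dividing the differential inequality by $(\|\dot\Delta_q Z\|_{L^2}^2+\epsilon)^{1/2}$, integrating in time, letting $\epsilon\to0$, multiplying by $2^{q\sigma}$ and summing over $q\in\mathbb{Z}$ with the $\ell^1$-bound on $\{c_q\}$, the $\widetilde L^\infty_T$-norm on the left-hand side and the time-integrated dissipation $\|(I-\mathcal{P})Z\|_{\widetilde L^2_T(\dot B^\sigma_{2,1})}^2$ both appear; the remaining right-hand side is the data norm $\|V_0-\bar V\|_{\dot B^\sigma_{2,1}}$ plus cubic-type terms that, by Cauchy--Schwarz in $t$, are bounded by $\sqrt{E(T)}$ times the quantities $\|\nabla V\|_{\widetilde L^2_T(\dot B^{\sigma-1}_{2,1})}$, $\|(I-\mathcal{P})V\|_{\widetilde L^2_T(\dot B^{\sigma-1}_{2,1})}$ and $\|(I-\mathcal{P})V\|_{\widetilde L^2_T(\dot B^\sigma_{2,1})}$, which is exactly (\ref{R-E46}). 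The main obstacle I anticipate is the careful bookkeeping of the source remainder $\tilde r(V)$ at the critical index: one must split it so that no factor is asked to sit in $\dot B^\sigma_{2,1}$ multiplied by another in $\dot B^\sigma_{2,1}$ without a compensating $L^\infty$ (which would be borderline false at $s=d/2+1$ for the product of the gradient-level pieces), and instead arrange one $(I-\mathcal{P})V$ factor at level $\dot B^{\sigma-1}$ or in $\widetilde L^2_T$ so that the dissipative $\widetilde L^2_T(\dot B^\sigma)$-norm of $(I-\mathcal{P})V$ only ever appears to the first power, multiplied by $\sqrt{E(T)}$ — this is what makes the final absorption into the left-hand side possible in Proposition \ref{prop4.1}.
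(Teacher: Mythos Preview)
Your overall strategy---localize with $\dot{\Delta}_q$, run the weighted $L^2$ energy, extract dissipation from the linear part $-LZ$ of Corollary~\ref{cor1.1}, and estimate commutators via Proposition~\ref{prop2.4}---is the same as the paper's. But the step where you ``divide the differential inequality by $(\|\dot\Delta_q Z\|_{L^2}^2+\epsilon)^{1/2}$'' (imported from the local-existence proof, Lemma~\ref{lem3.1}) is a genuine gap here: after dividing, the good term becomes
\[
\frac{\|(I-\mathcal{P})\dot\Delta_q Z\|_{L^2}^2}{(\|\dot\Delta_q Z\|_{L^2}^2+\epsilon)^{1/2}},
\]
and integrating this in time does \emph{not} give you $\|(I-\mathcal{P})\dot\Delta_q Z\|_{L^2_t(L^2)}$ cleanly. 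The paper does not divide. Instead it integrates the \emph{quadratic} energy identity directly over $[0,t]\times\mathbb{R}^d$, obtaining $\|\dot\Delta_q(V-\bar V)\|_{L^2_{\tilde A^0}}^2+\|(I-\mathcal{P})\dot\Delta_q V\|_{L^2_t(L^2)}^2$ on the left; it then bounds each right-hand piece at the squared level, multiplies by $2^{2q\sigma}$, takes the square root via Young's inequality, and only then sums in $q$. This is precisely what produces the $\sqrt{\|V-\bar V\|_{\widetilde L^\infty_T(\dot B^\sigma_{2,1})}}$ prefactor in (\ref{R-E46}) without damaging the dissipation.

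Two smaller corrections. First, $\mathrm{div}\,\mathbb A(V)=\tilde A^0(V)_t+\sum_j\tilde A^j(V)_{x_j}$ contains $V_t$, so your bound $\|\mathrm{div}\,\mathbb A(V)\|_{L^\infty}\lesssim\|\nabla V\|_{\dot B^{\sigma-1}_{2,1}}$ is incomplete; the paper uses the equation together with Corollary~\ref{cor1.1} to get $|V_t|\leq C(|\nabla V|+|(I-\mathcal{P})V|)$, and this is exactly why the term $\|(I-\mathcal{P})V\|_{\widetilde L^2_T(\dot B^{\sigma-1}_{2,1})}$ appears on the right of (\ref{R-E46}). Second, the paper organizes the source contribution as $I_2(V)=\tilde A^0(V)\bigl\{-[\dot\Delta_q,\tilde A^0(V)^{-1}]LV+\dot\Delta_q(\tilde A^0(V)^{-1}\tilde r(V))\bigr\}$ and pairs it against $(I-\mathcal{P})\dot\Delta_q V$ rather than the full $\dot\Delta_q V$; this is legitimate because the normal form makes $\tilde A^0(V)$ block-diagonal, so $I_2(V)\in\mathcal{M}^\perp$. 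That pairing is what ensures $\|(I-\mathcal{P})V\|_{\widetilde L^2_T(\dot B^\sigma_{2,1})}$ appears on the right only multiplied by $\sqrt{E(T)}$, which is essential for the later absorption in Proposition~\ref{prop4.1}.
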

\begin{proof}
Indeed, from Corollary \ref{cor1.1}, the normal form (\ref{R-E5})
can be written as
\begin{eqnarray}
\tilde{A}^{0}(V)V_{t}+\sum_{j=1}^{d}\tilde{A}^{j}(V)V_{x_{j}}+LV=\tilde{r}(V).\label{R-E47}
\end{eqnarray}
Applying the homogeneous localization operator
$\dot{\Delta}_{q}(q\in \mathbb{Z})$ to (\ref{R-E47}), we obtain
\begin{eqnarray}
\tilde{A}^{0}(V)\dot{\Delta}_{q}V_{t}+\sum_{j=1}^{d}\tilde{A}^{j}(V)\dot{\Delta}_{q}V_{x_{j}}+L\dot{\Delta}_{q}V=I_{1}(V)+I_{2}(V),\label{R-E48}
\end{eqnarray}
with
$$
I_{1}(V)=-\sum_{j=1}^{d}\tilde{A}^{0}(V)[\dot{\Delta}_{q},\tilde{A}^{0}(V)^{-1}\tilde{A}^{j}(V)]V_{x_{j}},
$$ and
$$
I_{2}(V)=\tilde{A}^{0}(V)\Big\{-[\dot{\Delta}_{q},\tilde{A}^{0}(V)^{-1}]LV+\dot{\Delta}_{q}\Big(\tilde{A}^{0}(V)^{-1}\tilde{r}(V)\Big)\Big\},
$$
where the commutator $[\cdot,\cdot]$ is defined by $[f,g]:=fg-gf$.
Take the inner product of (\ref{R-E48}) with $\dot{\Delta}_{q}V$ to
get
\begin{eqnarray}
&&\langle\tilde{A}^{0}(V)\dot{\Delta}_{q}V,\dot{\Delta}_{q}V\rangle_{t}+\sum_{j=1}^{d}\langle\tilde{A}^{j}(V)\dot{\Delta}_{q}V,\dot{\Delta}_{q}V\rangle_{x_{j}}
+2\langle L\dot{\Delta}_{q}V,\dot{\Delta}_{q}V\rangle\nonumber\\
&&\hspace{20mm}=2\langle(I_{1}(V)+I_{2}(V),\dot{\Delta}_{q}V\rangle+\langle\mathrm{div}\mathbb{A}(V)\dot{\Delta}_{q}V,\dot{\Delta}_{q}V\rangle,
\label{R-E49}
\end{eqnarray}
where
$$ \mathrm{div}\mathbb{A}(V)=\tilde{A}^{0}(V)_{t}+\sum_{j=1}^{d}\tilde{A}^{j}(V)_{x_{j}}$$
and
$$\|f\|_{L^2_{\tilde{A}^{0}}}:=\Big(\int_{\mathbb{R}^{d}}\langle\tilde{A}^{0}(V)f,f\rangle dx\Big)^{1/2},\
\
\|f\|_{\widetilde{L}^{2}_{T}(\dot{\mathcal{B}}^{\sigma}_{2,1})}=\|2^{q\sigma}\|\dot{\Delta}_{q}f\|_{L^2_{T}(L^2_{\tilde{A}^{0}})}\|_{\ell^{1}(\mathbb{Z})}.$$
Obviously, the norms
$\|f\|_{L^2_{\tilde{A}^{0}}}\thickapprox\|f\|_{L^2}$ and
$\|f\|_{\widetilde{L}^{2}_{T}(\dot{\mathcal{B}}^{\sigma}_{2,1})}\thickapprox\|f\|_{\widetilde{L}^{2}_{T}(\dot{B}^{\sigma}_{2,1})}$,
since $V(t,x)$ takes values in a neighborhood of
$\bar{V}\in\mathcal{M}$.

Integrating (\ref{R-E49}) over $[0,t]\times \mathbb{R}^{d}\
(t\in[0,T])$ gives
\begin{eqnarray}
&&\|\dot{\Delta}_{q}(V-\bar{V})\|^2_{L^2_{\tilde{A}^{0}}}\Big|^{t}_{0}+\|(I-\mathcal{P})\dot{\Delta}_{q}V\|^2_{L^2_{t}(L^2)}
\nonumber\\
&=&C\int_{\mathbb{R}^{+}}\int_{\mathbb{R}^{d}}\langle
I_{1}(V)+I_{2}(V),\dot{\Delta}_{q}V\rangle dxdt\nonumber\\&&
+C\int_{\mathbb{R}^{+}}\int_{\mathbb{R}^{d}}\langle\mathrm{div}\mathbb{A}(V)\dot{\Delta}_{q}V,\dot{\Delta}_{q}V\rangle
dxdt,\label{R-E50}
\end{eqnarray}
where we used the fact $\dot{\Delta}_{q}\bar{V}=0$. Next, we begin
to estimate the integrations in the right-hand side of (\ref{R-E50})
respectively.

From the equations (\ref{R-E5}) and Corollary \ref{cor1.1}, we have
$|V_{t}|\leq C(|\nabla V|+|(I-\mathcal{P})V|)$. Then
\begin{eqnarray}
&&\Big|\int_{\mathbb{R}^{+}}\int_{\mathbb{R}^{d}}\langle\mathrm{div}\mathbb{A}(V)\dot{\Delta}_{q}V,\dot{\Delta}_{q}V\rangle
dxdt\Big|\nonumber\\&\leq&
C\int_{\mathbb{R}^{+}}(\|V_{t}\|_{L^\infty}+\|\nabla
V\|_{L^\infty})\|\dot{\Delta}_{q}V\|^2_{L^2}dt\nonumber\\&\leq& C
\int_{\mathbb{R}^{+}}(\|\nabla
V\|_{L^\infty}+\|(I-\mathcal{P})V\|_{L^\infty})\|\dot{\Delta}_{q}V\|^2_{L^2}dt
\nonumber\\&\leq& C\Big(\|\nabla
V\|_{L^2_{t}(L^\infty)}+\|(I-\mathcal{P})V\|_{L^2_{t}(L^\infty)}\Big)\|\dot{\Delta}_{q}V\|_{L^{\infty}_{t}(L^2)}\|\dot{\Delta}_{q}V\|_{L^{2}_{t}(L^2)}
\nonumber\\&\leq& Cc_{q}^22^{-2q\sigma}\Big(\|\nabla
V\|_{L^2_{t}(\dot{B}^{\sigma-1}_{2,1})}+\|(I-\mathcal{P})V\|_{L^2_{t}(\dot{B}^{\sigma-1}_{2,1})}\Big)\nonumber\\&&\times
\|V-\bar{V}\|_{\widetilde{L}^{\infty}_{t}(\dot{B}^{\sigma}_{2,1})}\|\nabla
V\|_{\widetilde{L}^{2}_{t}(\dot{B}^{\sigma-1}_{2,1})},\label{R-E51}
\end{eqnarray}
where we used the embedding property (5) of Lemma \ref{lem2.2}. Here
and below $\{c_{q}\}$ denotes some sequence which satisfies
$\|(c_{q})\|_{ {l^{1}(\mathbb{Z})}}\leq 1$, although each
$\{c_{q}\}$ is possibly different.

With the aid of Cauchy-Schwartz inequality and the commutator
estimates in Proposition \ref{prop2.4}, we have
\begin{eqnarray}
&&\int_{\mathbb{R}^{+}}\int_{\mathbb{R}^{d}}\langle
I_{1}(V),\dot{\Delta}_{q}V\rangle dxdt \nonumber\\&\leq& C
\sum_{j=1}^{d}\|[\dot{\Delta}_{q},\tilde{A}^{0}(V)^{-1}\tilde{A}^{j}(V)]V_{x_{j}}\|_{L^2_{t}(L^2)}\|\dot{\Delta}_{q}V\|_{L^{2}_{t}(L^2_{\tilde{A}^{0}})}
\nonumber\\&\leq&
Cc_{q}2^{-q\sigma}\|\tilde{A}^{0}(V)^{-1}\tilde{A}^{j}(V)\|_{\widetilde{L}^{\infty}_{t}(\dot{B}^{\sigma}_{2,1})}\|\nabla
V\|_{\widetilde{L}^{2}_{t}(\dot{B}^{\sigma-1}_{2,1})}\|\dot{\Delta}_{q}V\|_{L^{2}_{t}(L^2_{\tilde{A}^{0}})}
\nonumber\\&\leq&
Cc_{q}2^{-q\sigma}\|V-\bar{V}\|_{\widetilde{L}^{\infty}_{t}(\dot{B}^{\sigma}_{2,1})}\|\nabla
V\|_{\widetilde{L}^{2}_{t}(\dot{B}^{\sigma-1}_{2,1})}\|\dot{\Delta}_{q}V\|_{L^{2}_{t}(L^2_{\tilde{A}^{0}})}\label{R-E52}
\end{eqnarray}
and
\begin{eqnarray}
&&\int_{\mathbb{R}^{+}}\int_{\mathbb{R}^{d}}\langle
I_{2}(V),\dot{\Delta}_{q}V\rangle dxdt \nonumber\\&\leq&
C\Big(\|[\dot{\Delta}_{q},\tilde{A}^{0}(V)^{-1}]LV\|_{L^2_{t}(L^2)}+\|\dot{\Delta}_{q}(\tilde{A}^{0}(V)^{-1}\tilde{r}(V))\|_{L^2_{t}(L^2)}\Big)\nonumber\\&&\times\|(I-\mathcal{P})\dot{\Delta}_{q}V\|_{L^{2}_{t}(L^2_{\tilde{A}^{0}})}
\nonumber\\&\leq&Cc_{q}2^{-q\sigma}\Big(\|\tilde{A}^{0}(V)^{-1}\|_{\widetilde{L}^{\infty}_{t}(\dot{B}^{\sigma}_{2,1})}\|LV\|_{\widetilde{L}^{2}_{t}(\dot{B}^{\sigma-1}_{2,1})}
+\|\tilde{A}^{0}(V)^{-1}\tilde{r}(V)\|_{\widetilde{L}^{2}_{t}(\dot{B}^{\sigma}_{2,1})}\Big)\nonumber\\&&\times\|(I-\mathcal{P})\dot{\Delta}_{q}V\|_{L^{2}_{t}(L^2_{\tilde{A}^{0}})}
\nonumber\\&\leq&Cc_{q}2^{-q\sigma}\|V-\bar{V}\|_{\widetilde{L}^{\infty}_{t}(\dot{B}^{\sigma}_{2,1})}
\Big(\|(I-P)V\|_{\widetilde{L}^{2}_{t}(\dot{B}^{\sigma-1}_{2,1})}
+\|\nabla
V\|_{\widetilde{L}^{2}_{t}(\dot{B}^{\sigma-1}_{2,1})}\Big)\nonumber\\&&\times\|(I-\mathcal{P})\dot{\Delta}_{q}V\|_{L^{2}_{t}(L^2_{\tilde{A}^{0}})},\label{R-E53}
\end{eqnarray}
where we also used the homogeneous versions as Propositions
\ref{prop2.2}-\ref{prop2.3}.

Combining with (\ref{R-E50})-(\ref{R-E53}), we are led to the
estimate
\begin{eqnarray}
&&\|\dot{\Delta}_{q}(V-\bar{V})\|^2_{L^2_{\tilde{A}^{0}}}+\|(I-\mathcal{P})\dot{\Delta}_{q}V\|^2_{L^2_{t}(L^2)}
\nonumber\\&\leq&
\|\dot{\Delta}_{q}(V_{0}-\bar{V})\|^2_{L^2_{\tilde{A}^{0}}}+Cc_{q}^22^{-2q\sigma}\Big(\|\nabla
V\|_{\widetilde{L}^2_{t}(\dot{B}^{\sigma-1}_{2,1})}+\|(I-\mathcal{P})V\|_{\widetilde{L}^2_{t}(\dot{B}^{\sigma-1}_{2,1})}\Big)\nonumber\\&&\times
\|V-\bar{V}\|_{\widetilde{L}^{\infty}_{t}(\dot{B}^{\sigma}_{2,1})}\|\nabla
V\|_{\widetilde{L}^{2}_{t}(\dot{B}^{\sigma-1}_{2,1})}+Cc_{q}2^{-q\sigma}\|V-\bar{V}\|_{\widetilde{L}^{\infty}_{t}(\dot{B}^{\sigma}_{2,1})}\|\nabla
V\|_{\widetilde{L}^{2}_{t}(\dot{B}^{\sigma-1}_{2,1})}\nonumber\\&&\times\|\dot{\Delta}_{q}V\|_{L^{2}_{t}(L^2_{\tilde{A}^{0}})}+
Cc_{q}2^{-q\sigma}\|V-\bar{V}\|_{\widetilde{L}^{\infty}_{t}(\dot{B}^{\sigma}_{2,1})}
\Big(\|(I-\mathcal{P})V\|_{\widetilde{L}^{2}_{t}(\dot{B}^{\sigma-1}_{2,1})}
\nonumber\\&&+\|\nabla
V\|_{\widetilde{L}^{2}_{t}(\dot{B}^{\sigma-1}_{2,1})}\Big)\|(I-\mathcal{P})\dot{\Delta}_{q}V\|_{L^{2}_{t}(L^2_{\tilde{A}^{0}})}.\label{R-E54}
\end{eqnarray}
Then multiplying the factor $2^{2q\sigma}$ on both sides of
(\ref{R-E54}), we obtain
\begin{eqnarray}
&&2^{2q\sigma}\|\dot{\Delta}_{q}(V-\bar{V})\|^2_{L^2_{\tilde{A}^{0}}}+2^{2q\sigma}\|(I-\mathcal{P})\dot{\Delta}_{q}V\|^2_{L^2_{t}(L^2)}
\nonumber\\&\leq&
2^{2q\sigma}\|\dot{\Delta}_{q}(V_{0}-\bar{V})\|^2_{L^2_{\tilde{A}^{0}}}+Cc_{q}^2\Big(\|\nabla
V\|_{\widetilde{L}^2_{t}(\dot{B}^{\sigma-1}_{2,1})}+\|(I-\mathcal{P})V\|_{\widetilde{L}^2_{t}(\dot{B}^{\sigma-1}_{2,1})}\Big)\nonumber\\&&\times
\|V-\bar{V}\|_{\widetilde{L}^{\infty}_{t}(\dot{B}^{\sigma}_{2,1})}\|\nabla
V\|_{\widetilde{L}^{2}_{t}(\dot{B}^{\sigma-1}_{2,1})}+Cc^2_{q}\|V-\bar{V}\|_{\widetilde{L}^{\infty}_{t}(\dot{B}^{\sigma}_{2,1})}\|\nabla
V\|_{\widetilde{L}^{2}_{t}(\dot{B}^{\sigma-1}_{2,1})}\nonumber\\&&\times\|\nabla
V\|_{\widetilde{L}^{2}_{t}(\dot{\mathcal{B}}^{\sigma-1}_{2,1})}+
Cc^2_{q}\|V-\bar{V}\|_{\widetilde{L}^{\infty}_{t}(\dot{B}^{\sigma}_{2,1})}
\Big(\|(I-\mathcal{P})V\|_{\widetilde{L}^{2}_{t}(\dot{B}^{\sigma-1}_{2,1})}
\nonumber\\&&+\|\nabla
V\|_{\widetilde{L}^{2}_{t}(\dot{B}^{\sigma-1}_{2,1})}\Big)\|(I-\mathcal{P})V\|_{\widetilde{L}^{2}_{t}(\dot{\mathcal{B}}^{\sigma}_{2,1})}.\label{R-E55}
\end{eqnarray}
By performing Young's inequality, we have
\begin{eqnarray}
&&2^{q\sigma}\|\dot{\Delta}_{q}(V-\bar{V})\|_{L^{\infty}_{T}(L^2)}+2^{q\sigma}\|(I-\mathcal{P})\dot{\Delta}_{q}V\|_{L^2_{T}(L^2)}
\nonumber\\&\leq&
2^{q\sigma}\|\dot{\Delta}_{q}(V_{0}-\bar{V})\|_{L^2}
+Cc_{q}\sqrt{\|V-\bar{V}\|_{\widetilde{L}^{\infty}_{T}(\dot{B}^{\sigma}_{2,1})}}\nonumber\\&&\times\Big(\|\nabla
V\|_{\widetilde{L}^2_{T}(\dot{B}^{\sigma-1}_{2,1})}+\|(I-\mathcal{P})V\|_{\widetilde{L}^2_{T}(\dot{B}^{\sigma-1}_{2,1})}
+\|(I-\mathcal{P})V\|_{\widetilde{L}^{2}_{T}(\dot{B}^{\sigma}_{2,1})}\Big),\label{R-E56}
\end{eqnarray}
where we used norms
$\|f\|_{L^2_{\tilde{A}^{0}}}\thickapprox\|f\|_{L^2}$ and
$\|f\|_{\widetilde{L}^{2}_{T}(\dot{\mathcal{B}}^{\sigma}_{2,1})}\approx\|f\|_{\widetilde{L}^{2}_{T}(\dot{B}^{\sigma}_{2,1})}.$

Summing up (\ref{R-E56}) on $q\in\mathbb{Z}$, we can get
(\ref{R-E46}) immediately.
\end{proof}

\begin{rem}\label{rem 4.1}
According to the elementary fact
$$L^{\theta}(L^{p})\cap \widetilde{L}^{\theta}(\dot{B}^{s}_{p,r})=\widetilde{L}^{\theta}(B^{s}_{p,r})(\theta\geq r),$$
and Corollary \ref{cor6.1},  which are achieved in the Appendix, it
follows from (\ref{R-E42}) and (\ref{R-E46}) that
\begin{eqnarray}
E(T)+\|(I-\mathcal{P})V\|_{\widetilde{L}^{2}_{T}(B^{\sigma}_{2,1})}\leq
C\|V_{0}-\bar{V}\|_{B^{\sigma}_{2,1}}+C\sqrt{E(T)}D(T).\label{R-E57}
\end{eqnarray}
\end{rem}

\begin{lem}\label{lem4.3}
If $V-\bar{V}\in \widetilde{\mathcal{C}}_{T}(B^{\sigma}_{2,1})\cap
\widetilde{\mathcal{C}}^1_{T}(B^{\sigma-1}_{2,1})$ is a solution of
(\ref{R-E5}) and (\ref{R-E9}) for any $T>0$,  then the following
estimate holds:
\begin{eqnarray}
\|\nabla V\|_{\widetilde{L}^2_{T}(B^{\sigma-1}_{2,1})}&\leq&
C\|V_{0}-\bar{V}\|_{B^{\sigma}_{2,1}}+CE(T)\nonumber\\&&+C\|(I-\mathcal{P})V\|_{\widetilde{L}^{2}_{T}(B^{\sigma}_{2,1})}+C\sqrt{E(T)}D(T),\label{R-E58}
\end{eqnarray}
where $C$ is a uniform positive constant independent of \ $T$.
\end{lem}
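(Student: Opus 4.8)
The plan is to extract an artificial dissipation for the full gradient $\nabla V$ out of the Shizuta--Kawashima structure, using the skew-symmetric compensating matrix $\tilde{K}(\omega)$ supplied by the last characterization in Theorem \ref{thm1.3}. I would start from the normal form written as in (\ref{R-E47}), $\tilde{A}^{0}(V)V_{t}+\sum_{j=1}^{d}\tilde{A}^{j}(V)V_{x_{j}}+LV=\tilde{r}(V)$, linearize the coefficients about $\bar{V}$ (writing $\tilde{A}^{j}(V)=\tilde{A}^{j}(\bar{V})+(\tilde{A}^{j}(V)-\tilde{A}^{j}(\bar{V}))$, and similarly for $\tilde{A}^{0}$), and apply the homogeneous block $\dot{\Delta}_{q}$, $q\in\mathbb{Z}$, to obtain an equation for $\dot{\Delta}_{q}V$ carrying commutator, coefficient-difference and $\tilde{r}(V)$ remainders, as in (\ref{R-E48}). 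The key new ingredient is a correction term: letting $\tilde{K}$ act as the homogeneous degree-zero Fourier multiplier with symbol $i\tilde{K}(\xi/|\xi|)$, I would form on each dyadic block the functional
$$\mathcal{E}_{q}(t):=\|\dot{\Delta}_{q}(V-\bar{V})\|_{L^{2}_{\tilde{A}^{0}}}^{2}-\kappa\,\theta_{q}\,\mathrm{Re}\!\int_{\mathbb{R}^{d}}\langle i\tilde{K}\tilde{A}^{0}(\bar{V})\dot{\Delta}_{q}V,\dot{\Delta}_{q}V\rangle\,dx,$$
with $\kappa>0$ small and a dyadic weight $\theta_{q}\approx 2^{q}/(1+2^{2q})$. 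Since $\tilde{K}(\omega)\tilde{A}^{0}(\bar{V})$ is skew-symmetric the correction is real, and the weight is calibrated so that $\mathcal{E}_{q}\approx\|\dot{\Delta}_{q}(V-\bar{V})\|_{L^{2}}^{2}$ uniformly in $q\in\mathbb{Z}$.

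Differentiating $\mathcal{E}_{q}$ in time and inserting (\ref{R-E47}), the interaction of the correction with the constant-coefficient flux $\sum_{j}\tilde{A}^{j}(\bar{V})\dot{\Delta}_{q}V_{x_{j}}$ produces, by the positive definiteness of $[\tilde{K}(\omega)\tilde{A}(\omega)]'+L$, a genuinely negative contribution of size $-c\kappa\,\frac{2^{2q}}{1+2^{2q}}\,\|\dot{\Delta}_{q}V\|_{L^{2}}^{2}$, which is exactly the decay rate in Theorem \ref{thm1.3}; the cross terms involving $L$ are split by Young's inequality into a part absorbed by this negative contribution plus a multiple of $\|(I-\mathcal{P})\dot{\Delta}_{q}V\|_{L^{2}}^{2}$. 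As this estimate is carried out on the compensating-function functional alone --- the basic $(I-\mathcal{P})V$--dissipation of Lemmas \ref{lem4.1}--\ref{lem4.2} not being re-derived here --- this last term cannot be absorbed and, after the dyadic summation, appears as the term $C\|(I-\mathcal{P})V\|_{\widetilde{L}^{2}_{T}(B^{\sigma}_{2,1})}$ on the right-hand side of (\ref{R-E58}). The coefficient-difference terms, the commutators, the $\mathrm{div}\,\mathbb{A}(V)$ contributions (handled through $|V_{t}|\le C(|\nabla V|+|(I-\mathcal{P})V|)$, which follows from (\ref{R-E47}) and Corollary \ref{cor1.1}) and the quadratic remainder $\tilde{r}(V)$ are all treated by the product, composition and commutator estimates of Propositions \ref{prop2.2}--\ref{prop2.4}; being at least quadratic in $V-\bar{V}$, they produce, after the passage to square roots and the summation, the terms $CE(T)$ and $C\sqrt{E(T)}D(T)$.

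Then I would integrate $\frac{d}{dt}\mathcal{E}_{q}+c\kappa\frac{2^{2q}}{1+2^{2q}}\|\dot{\Delta}_{q}V\|_{L^{2}}^{2}\le\mathcal{R}_{q}$ over $[0,T]$, where $\mathcal{R}_{q}$ collects the above remainders, take square roots (splitting square roots of products by Young's inequality), multiply by $2^{q(\sigma-1)}$ and sum over $q\in\mathbb{Z}$. For $q\ge0$ the rate is $\approx1$, so the summand is $\approx 2^{q\sigma}\|\dot{\Delta}_{q}V_{0}\|_{L^{2}}$ and sums to $\lesssim\|V_{0}-\bar{V}\|_{B^{\sigma}_{2,1}}$; for $q\le0$ the rate is $\approx 2^{2q}$, so the summand is $\approx 2^{q(\sigma-1)}\|\dot{\Delta}_{q}V_{0}\|_{L^{2}}$, whose series converges because $\sigma-1>0$ and is bounded by $\|V_{0}-\bar{V}\|_{L^{2}}\lesssim\|V_{0}-\bar{V}\|_{B^{\sigma}_{2,1}}$. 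This delivers $\nabla V\in L^{2}_{T}(L^{2})\cap\widetilde{L}^{2}_{T}(\dot{B}^{\sigma-1}_{2,1})$ with the claimed bound, and a final appeal to the identity $\widetilde{L}^{2}_{T}(B^{\sigma-1}_{2,1})=L^{2}_{T}(L^{2})\cap\widetilde{L}^{2}_{T}(\dot{B}^{\sigma-1}_{2,1})$ recorded in Remark \ref{rem 4.1} (see Corollary \ref{cor6.1}) upgrades it to the inhomogeneous Chemin--Lerner norm on the left of (\ref{R-E58}).

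I expect the \textbf{main difficulty} to be the design and calibration of the block functional $\mathcal{E}_{q}$: the dyadic weight $\theta_{q}$ has to make $\mathcal{E}_{q}$ equivalent to $\|\dot{\Delta}_{q}(V-\bar{V})\|_{L^{2}}^{2}$ simultaneously for \emph{every} frequency $q\in\mathbb{Z}$ while still yielding the sharp dissipation $2^{2q}/(1+2^{2q})$, and one must then keep careful account --- in Chemin--Lerner norms and across the high/low frequency split --- both of the $L$-cross-terms (the source of the unavoidable $\|(I-\mathcal{P})V\|_{\widetilde{L}^{2}_{T}(B^{\sigma}_{2,1})}$ on the right-hand side) and of the quadratic nonlinear remainders.
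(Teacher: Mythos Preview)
Your approach is correct and will deliver (\ref{R-E58}), but it differs from the paper's in two structural ways.

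First, the paper works directly with the \emph{inhomogeneous} blocks $\Delta_{q}$, $q\ge-1$, whereas you use homogeneous blocks $\dot{\Delta}_{q}$, $q\in\mathbb{Z}$, followed by a high/low-frequency summation and a final appeal to Corollary~\ref{cor6.1}. Second, and more importantly, the paper does \emph{not} build a combined Lyapunov functional $\mathcal{E}_{q}$ with a calibrated weight $\theta_{q}$: it first rewrites (\ref{R-E47}) as the constant-coefficient system $\tilde{A}^{0}Z_{t}+\sum_{j}\tilde{A}^{j}Z_{x_{j}}+LZ=\mathcal{G}$ with $Z=V-\bar{V}$ and all nonlinearities pushed into the forcing $\mathcal{G}$, then applies $\Delta_{q}$, Fourier-transforms, and multiplies \emph{only} by $-i|\xi|\tilde{K}(\omega)$ (no energy part added). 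The skew-symmetry of $\tilde{K}(\omega)\tilde{A}^{0}$ makes the cross term a perfect time derivative; after integrating in $t$ this boundary term is bounded crudely by $|\xi|\,|\widehat{\Delta_{q}Z}|^{2}\le(1+|\xi|^{2})|\widehat{\Delta_{q}Z}|^{2}$, and Bernstein on the localized block turns this into $C2^{2q}\big(\|\Delta_{q}Z\|_{L^{\infty}_{T}(L^{2})}^{2}+\|\Delta_{q}Z_{0}\|_{L^{2}}^{2}\big)$, which after multiplication by $2^{2q(\sigma-1)}$ and summation gives exactly the $CE(T)$ and $C\|V_{0}-\bar{V}\|_{B^{\sigma}_{2,1}}$ terms. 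Thus the role played in your scheme by the weight $\theta_{q}\approx 2^{q}/(1+2^{2q})$ and by the positivity of $\mathcal{E}_{q}$ is replaced in the paper by a direct boundary-term estimate via Bernstein's inequality on inhomogeneous blocks.

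What each approach buys: the paper's route is shorter and avoids the careful calibration you flag as the main difficulty --- no combined functional, no $\theta_{q}$, no low/high split, and the variable-coefficient $\tilde{A}^{0}(V)$ never enters the compensating step because the linearization is frozen at $\bar{V}$ from the start. Your route is the more classical pointwise-in-frequency Kawashima construction; it is self-contained (you recover the sharp rate $2^{2q}/(1+2^{2q})$ block by block) and generalizes more readily, at the price of redoing part of the energy estimate already established in Lemma~\ref{lem4.2}.
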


\begin{proof}
In this paragraph, we shall take full advantage of the [SK]
stability condition in Theorem \ref{thm1.3} to establish the
inequality (\ref{R-E58}).

For this purpose, we set $Z=V-\bar{V}$. Then the symmetric
hyperbolic system (\ref{R-E5}) can be written as the linearized form
at $\bar{V}$
\begin{eqnarray}
\tilde{A}^{0}Z_{t}+\sum^{d}_{j=1}\tilde{A}^{j}Z_{x_{j}}+LZ=\mathcal{G},\label{R-E59}
\end{eqnarray}
where $\tilde{A}^{0}=\tilde{A}^{0}(\bar{V}),
\tilde{A}^{j}=\tilde{A}^{j}(\bar{V})$ and $L$ are the constant
matrices, and $\mathcal{G}=\mathcal{G}_{1}+\mathcal{G}_{2}$ with
$$\mathcal{G}_{1}:=-\sum^{d}_{j=1}\tilde{A}^{0}\Big(\tilde{A}^{0}(V)^{-1}\tilde{A}^{j}(V)-(\tilde{A}^{0})^{-1}\tilde{A}^{j}\Big)V_{x_{j}},$$
$$\mathcal{G}_{2}:=\tilde{A}^{0}\Big\{-\Big(\tilde{A}^{0}(V)^{-1}-(\tilde{A}^{0})^{-1}\Big)LV+\tilde{A}^{0}(V)^{-1}\tilde{r}(V)\Big\}.$$
Applying the inhomogeneous localization operator
$\Delta_{q}(q\geq-1)$ to  (\ref{R-E59}) gives
\begin{eqnarray}
\tilde{A}^{0}\Delta_{q}Z_{t}+\sum^{d}_{j=1}\tilde{A}^{j}\Delta_{q}Z_{x_{j}}+L\Delta_{q}Z=\Delta_{q}\mathcal{G}.\label{R-E60}
\end{eqnarray}
Perform the Fourier transform with respect to the space variable $x$
for (\ref{R-E60}) and multiply the resulting equation by
$-i|\xi|\tilde{K}(\omega)$. Then by taking the inner product with
$\widehat{\Delta_{q}Z}$ and taking the real part of each term in the
resulting equality, we can obtain
\begin{eqnarray}
&&|\xi|\mathrm{Im}\Big\langle\tilde{K}(\omega)\tilde{A}^{0}\frac{d}{dt}\widehat{\Delta_{q}Z},\widehat{\Delta_{q}Z}\Big\rangle+|\xi|^2
\langle[\tilde{K}(\omega)\tilde{A}(\omega)]'\widehat{\Delta_{q}Z},\widehat{\Delta_{q}Z}\rangle\nonumber\\&&\hspace{6mm}=
-|\xi|\mathrm{Im}\langle\tilde{K}(\omega)L\widehat{\Delta_{q}Z},\widehat{\Delta_{q}Z}\rangle
+|\xi|\mathrm{Im}\langle\tilde{K}(\omega)\widehat{\Delta_{q}\mathcal{G}},\widehat{\Delta_{q}Z}\rangle
\label{R-E61}
\end{eqnarray}
with $\tilde{A}(\omega)=\sum^{d}_{j=1}\tilde{A}^{j}\omega_{j}$ and
$\omega=\xi/|\xi|.$

The skew-symmetry of $\tilde{K}(\omega)\tilde{A}^{0}$ implies that
\begin{eqnarray}
|\xi|\mathrm{Im}\Big\langle\tilde{K}(\omega)\tilde{A}^{0}\frac{d}{dt}\widehat{\Delta_{q}Z},\widehat{\Delta_{q}Z}\Big\rangle=
\frac{1}{2}\frac{d}{dt}|\xi|\mathrm{Im}\langle\tilde{K}(\omega)\tilde{A}^{0}\widehat{\Delta_{q}Z},\widehat{\Delta_{q}Z}\rangle.\label{R-E62}
\end{eqnarray}
It follows from the condition that
$[\tilde{K}(\omega)\tilde{A}(\omega)]'+L $ is positive definite for
$\omega\in \mathbb{S}^{d-1}$ that
\begin{eqnarray}
|\xi|^2
\langle[\tilde{K}(\omega)\tilde{A}(\omega)]'\widehat{\Delta_{q}Z},\widehat{\Delta_{q}Z}\rangle\geq
c|\xi|^2|\widehat{\Delta_{q}Z}|^2-C|\xi|^2|(I-\mathcal{P})\widehat{\Delta_{q}Z}|^2.\label{R-E63}
\end{eqnarray}
Moreover, by virtue of Young's inequality, we have
\begin{eqnarray}
\Big||\xi|\mathrm{Im}\langle\tilde{K}(\omega)L\widehat{\Delta_{q}Z},\widehat{\Delta_{q}Z}\rangle\Big|\leq\varepsilon|\xi|^2|\widehat{\Delta_{q}Z}|^2
+C_{\varepsilon}|(I-\mathcal{P})\widehat{\Delta_{q}Z}|^2\label{R-E64}
\end{eqnarray}
for any $\varepsilon>0$, and
\begin{eqnarray}
\Big||\xi|\mathrm{Im}\langle\tilde{K}(\omega)\widehat{\Delta_{q}\mathcal{G}},\widehat{\Delta_{q}Z}\rangle\Big|\leq
C|\xi||\widehat{\Delta_{q}Z}||\widehat{\Delta_{q}\mathcal{G}}|,\label{R-E65}
\end{eqnarray}
where we have used the uniform boundedness of the matrix
$\tilde{K}(\omega)$. Combining the equalities
(\ref{R-E61})-(\ref{R-E62}) and the inequalities
(\ref{R-E63})-(\ref{R-E65}), we deduce that
\begin{eqnarray}
|\xi|^2|\widehat{\Delta_{q}Z}|^2&\leq&
C(1+|\xi|^2)|(I-\mathcal{P})\widehat{\Delta_{q}Z}|^2+C|\xi||\widehat{\Delta_{q}Z}||\widehat{\Delta_{q}\mathcal{G}}|\nonumber\\&&-
\frac{1}{2}\frac{d}{dt}|\xi|\mathrm{Im}\langle\tilde{K}(\omega)\tilde{A}^{0}\widehat{\Delta_{q}Z},\widehat{\Delta_{q}Z}\rangle.\label{R-E66}
\end{eqnarray}
Integrating (\ref{R-E66}) over $[0,t]\times\mathbb{R}^{d}$,
 and using Plancherel's theorem yields
\begin{eqnarray}
&&\int^t_{0}\|\Delta_{q}\nabla Z\|^2_{L^2}d\tau\nonumber\\&\leq
&C\int^t_{0}\Big(\|(I-\mathcal{P})\Delta_{q}
Z\|^2_{L^2}+\|(I-\mathcal{P})\Delta_{q}\nabla
Z\|^2_{L^2}\Big)d\tau+C\int^t_{0}\|\Delta_{q}\nabla
Z\|_{L^2}\|\Delta_{q}\mathcal{G}\|_{L^2}d\tau\nonumber\\&&-
\frac{1}{2}\frac{d}{dt}|\xi|\mathrm{Im}\langle\tilde{K}(\omega)\tilde{A}^{0}\widehat{\Delta_{q}Z},\widehat{\Delta_{q}Z}\rangle
d\xi\Big|^t_{0}.\label{R-E67}
\end{eqnarray}
The last term on the right side of (\ref{R-E67}) can be estimated as
\begin{eqnarray}
&&-\frac{1}{2}\frac{d}{dt}|\xi|\mathrm{Im}\langle\tilde{K}(\omega)\tilde{A}^{0}\widehat{\Delta_{q}Z},\widehat{\Delta_{q}Z}\rangle
d\xi\Big|^t_{0} \nonumber\\&\leq &
C\int_{\mathbb{R}^{d}}(1+|\xi|^2)(|(\widehat{\Delta_{q}Z})|^2+|(\widehat{\Delta_{q}Z_{0}})|^2)d\xi\nonumber\\&\leq
&C2^{2q}(\|\Delta_{q}Z\|^2_{L^2}+\|\Delta_{q}Z_{0}\|^2_{L^2}),\label{R-E68}
\end{eqnarray}
where we used Bernstein's inequality (Lemma \ref{lem2.1}) in the
last step of (\ref{R-E68}). Substituting it into (\ref{R-E67}), we
conclude that
\begin{eqnarray}
&&\|\Delta_{q}\nabla Z\|^2_{L^2_{t}(L^2)}\nonumber\\&\leq
&C2^{2q}(\|\Delta_{q}Z\|^2_{L^\infty_{T}(L^2)}+\|\Delta_{q}Z_{0}\|^2_{L^2})\nonumber\\&&+C\Big(\|(I-\mathcal{P})\Delta_{q}
Z\|^2_{L^2_{T}(L^2)}+\|(I-\mathcal{P})\Delta_{q}\nabla
Z\|^2_{L^2_{T}(L^2)}\Big)\nonumber\\&&+C\|\Delta_{q}\nabla
Z\|_{L^2_{T}(L^2)}\|\Delta_{q}\mathcal{G}\|_{L^2_{T}(L^2)}.\label{R-E69}
\end{eqnarray}
Then multiplying the factor $2^{2q(\sigma-1)}$ on both sides of
(\ref{R-E69}) gives
\begin{eqnarray}
&&2^{2q(\sigma-1)}\|\Delta_{q}\nabla
Z\|^2_{L^2_{t}(L^2)}\nonumber\\&\leq
&Cc_{q}^2(\|Z\|^2_{\widetilde{L}^\infty_{T}(B^{\sigma}_{2,1})}+\|Z_{0}\|^2_{B^{\sigma}_{2,1}})+Cc_{q}^2\Big(\|(I-\mathcal{P})
Z\|^2_{\widetilde{L}^2_{T}(B^{\sigma}_{2,1})}\Big)\nonumber\\&&+Cc_{q}^2\|\nabla
Z\|_{\widetilde{L}^2_{T}(B^{\sigma-1}_{2,1})}\|\mathcal{G}\|_{\widetilde{L}^2_{T}(B^{\sigma-1}_{2,1})}\label{R-E70}
\end{eqnarray}
with
$\|\mathcal{G}\|_{\widetilde{L}^2_{T}(B^{\sigma-1}_{2,1})}\leq\|\mathcal{G}_{1}\|_{\widetilde{L}^2_{T}(B^{\sigma-1}_{2,1})}
+\|\mathcal{G}_{2}\|_{\widetilde{L}^2_{T}(B^{\sigma-1}_{2,1})}$,
where
\begin{eqnarray}
\|\mathcal{G}_{1}\|_{\widetilde{L}^2_{T}(B^{\sigma-1}_{2,1})}\leq
C\|Z\|_{\widetilde{L}^\infty_{T}(B^{\sigma-1}_{2,1})}\|\nabla
V\|_{\widetilde{L}^2_{T}(B^{\sigma-1}_{2,1})}\label{R-E71}
\end{eqnarray}
and
\begin{eqnarray}
\|\mathcal{G}_{2}\|_{\widetilde{L}^2_{T}(B^{\sigma-1}_{2,1})}\leq\|Z\|_{\widetilde{L}^\infty_{T}(B^{\sigma-1}_{2,1})}
\|(I-\mathcal{P})V\|_{\widetilde{L}^{2}_{T}(B^{\sigma-1}_{2,1})}+C\|\tilde{A}^{0}(V)^{-1}\tilde{r}(V)\|_{\widetilde{L}^2_{T}(B^{\sigma-1}_{2,1})}.\label{R-E72}
\end{eqnarray}
From Corollaries \ref{cor6.1} and \ref{cor1.1}, we have
\begin{eqnarray}
&&\|\tilde{A}^{0}(V)^{-1}\tilde{r}(V)\|_{\widetilde{L}^2_{T}(B^{\sigma-1}_{2,1})}\nonumber\\&\leq&
C\Big(\|\tilde{A}^{0}(V)^{-1}\tilde{r}(V)\|_{L^2_{T}(L^2)}+\|\tilde{A}^{0}(V)^{-1}\tilde{r}(V)\|_{\widetilde{L}^2_{T}(\dot{B}^{\sigma-1}_{2,1})}\Big)
\nonumber\\&\leq&
C\Big(\|Z\|_{L^\infty_{T}(L^\infty)}\|(I-\mathcal{P})V\|_{L^2_{T}(L^2)}+\|\tilde{A}^{0}(V)^{-1}\|_{\widetilde{L}^\infty_{T}(\dot{B}^{\sigma-1}_{2,1})}
\|\tilde{r}(V)\|_{\widetilde{L}^2_{T}(\dot{B}^{\sigma-1}_{2,1})}\Big)\nonumber\\&\leq&C\Big(\|Z\|_{\widetilde{L}^\infty_{T}(B^{\sigma-1}_{2,1})}
\|(I-\mathcal{P})V\|_{\widetilde{L}^2_{T}(B^{\sigma}_{2,1})}+\|Z\|_{\widetilde{L}^\infty_{T}(\dot{B}^{\sigma-1}_{2,1})}
\|\nabla
V\|_{\widetilde{L}^2_{T}(\dot{B}^{\sigma-2}_{2,1})}\Big)\nonumber\\&\leq&C\|Z\|_{\widetilde{L}^\infty_{T}(B^{\sigma-1}_{2,1})}
\Big(\|(I-\mathcal{P})V\|_{\widetilde{L}^2_{T}(B^{\sigma}_{2,1})}+\|\nabla
V\|_{\widetilde{L}^2_{T}(B^{\sigma-1}_{2,1})}\Big).\label{R-E73}
\end{eqnarray}
To conclude, we combine the estimates (\ref{R-E70})-(\ref{R-E73}) to
obtain
\begin{eqnarray}
&&2^{2q(\sigma-1)}\|\Delta_{q}\nabla
Z\|^2_{L^2_{t}(L^2)}\nonumber\\&\leq
&Cc_{q}^2(\|Z\|^2_{\widetilde{L}^\infty_{T}(B^{\sigma}_{2,1})}+\|Z_{0}\|^2_{B^{\sigma}_{2,1}})+Cc_{q}^2\Big(\|(I-\mathcal{P})
Z\|^2_{\widetilde{L}^2_{T}(B^{\sigma}_{2,1})}\Big)\nonumber\\&&+Cc_{q}^2\|\nabla
Z\|_{\widetilde{L}^2_{T}(B^{\sigma-1}_{2,1})}\Big\{\|Z\|_{\widetilde{L}^\infty_{T}(B^{\sigma-1}_{2,1})}\Big(\|\nabla
V\|_{\widetilde{L}^2_{T}(B^{\sigma-1}_{2,1})}+\|(I-\mathcal{P})V\|_{\widetilde{L}^2_{T}(B^{\sigma}_{2,1})}\Big)\Big\}.\label{R-E74}
\end{eqnarray}
By employing Young's inequality, we are led to the estimate
\begin{eqnarray}
&&2^{q(\sigma-1)}\|\Delta_{q}\nabla
Z\|_{L^2_{T}(L^2)}\nonumber\\&\leq
&Cc_{q}(\|Z\|_{\widetilde{L}^\infty_{T}(B^{\sigma}_{2,1})}+\|Z_{0}\|_{B^{\sigma}_{2,1}})+Cc_{q}\|(I-\mathcal{P})
Z\|_{\widetilde{L}^2_{T}(B^{\sigma}_{2,1})}\nonumber\\&&+Cc_{q}\sqrt{\|Z\|_{\widetilde{L}^\infty_{T}(B^{\sigma}_{2,1})}}\Big(\|\nabla
V\|_{\widetilde{L}^2_{T}(B^{\sigma-1}_{2,1})}+\|(I-\mathcal{P})V\|_{\widetilde{L}^2_{T}(B^{\sigma}_{2,1})}\Big).\label{R-E75}
\end{eqnarray}
In the end, summing up (\ref{R-E75}) on $q\geq-1$, we immediately
deduce that
\begin{eqnarray}
&&\|\nabla
Z\|_{\widetilde{L}^2_{T}(B^{\sigma-1}_{2,1})}\nonumber\\&\leq
&C(\|Z_{0}\|_{B^{\sigma}_{2,1}}+E(T))+C\|(I-\mathcal{P})
Z\|_{\widetilde{L}^2_{T}(B^{\sigma}_{2,1})}+C\sqrt{E(T)}D(T),\label{R-E76}
\end{eqnarray}
which is just the desired inequality (\ref{R-E58}).\end{proof}

\noindent \textbf{\textit{The proof of Proposition \ref{prop4.1}}.}
Combining the inequalities (\ref{R-E57}) and (\ref{R-E58}), we end
up with
\begin{eqnarray}
E(T)+D(T)&\leq&
C\|V_{0}-\bar{V}\|_{B^{\sigma}_{2,1}}+C\sqrt{E(T)}D(T)\nonumber\\&\leq&
C\|V_{0}-\bar{V}\|_{B^{\sigma}_{2,1}}+C\sqrt{\delta_{1}}D(T).\label{R-E77}
\end{eqnarray}
We take $\delta_{1}>0$ suitably small such that
$C\sqrt{\delta_{1}}\leq1/2$. Then the \textit{a priori} estimate
(\ref{R-E41}) follows readily. This finishes the proof of
Proposition \ref{prop4.1}.
\hspace{74mm}$\square$\\

\noindent \textbf{\textit{The proof of Corollary \ref{cor1.2}}.}
From Remark \ref{rem2.2} an the energy inequality (\ref{R-E8}), we
get
$$\nabla \mathcal{P}U\in L^2_{t}(B^{\sigma-1}_{2,1})
\ \ \ \ \mbox{and}\ \ \ \ \nabla \mathcal{P}U_{t}\in
L^2_{t}(B^{\sigma-2}_{2,1})$$ for any $t>0$. Set
$$\mathcal{H}(t)=\|\nabla
\mathcal{P}U(\cdot,t)\|^2_{B^{\sigma-2}_{2,1}}\in L^1_{t}.$$ By
direct calculations, it is not difficult to see that
$$ \frac{d}{dt}\mathcal{H}(t)\leqslant \|\nabla
\mathcal{P}U(\cdot,t)\|^2_{B^{\sigma-2}_{2,1}}+\|\nabla
\mathcal{P}U_{t}(\cdot,t)\|^2_{B^{\sigma-2}_{2,1}}\in L^1_{t}.$$
Then apply to the embedding property in Sobolev spaces to get
$\mathcal{H}(t)\rightarrow 0$, as $t\rightarrow +\infty$.

On the other hand,  we can deduce that $\mathcal{P}U(t,x)-\bar{U}$
is bounded in $\mathcal{C}([0,\infty),B^{\sigma}_{2,1})$. Hence,
from interpolation arguments and the definition of projection
operator $\mathcal{P}$, we obtain
$$\|\nabla \mathcal{P}(U(\cdot,t)-\bar{U})\|_{B^{\sigma-1-\varepsilon}_{2,1}}\rightarrow
0\ (\varepsilon>0), \ \ \mbox{as}\ \ t\rightarrow +\infty.$$ It
further follows from Gagliardo-Nirenberg-Sobolev inequality in
\cite{E} that
$$\|\mathcal{P}U(\cdot,t)-\bar{U}\|_{B^{\sigma-1-\varepsilon}_{p,2}}\rightarrow
0 \ \ \Big(p=\frac{2d}{d-2},\ d>2\Big).$$ What remains is the
asymptotic behavior of $(I-\mathcal{P})U$, which can be dealt with
in a similar way, here, we omit the details.\hspace{114mm}$\square$

\section{Applications}\setcounter{equation}{0}\label{sec:5}
In this section, we see that the new global existence result can be
applied to some concrete partially dissipative hyperbolic equations,
for instance, the compressible Euler equations with damping, which
are given by the following form
\begin{equation}
\left\{
\begin{array}{l}
\partial_{t}\rho + \nabla\cdot(\rho\textbf{u}) = 0 , \\
\partial_{t}(\rho\textbf{u}) +\nabla\cdot(\rho\textbf{u}\otimes\textbf{u}) +
\nabla P =-\rho\textbf{u}.
\end{array} \right.\label{R-E78}
\end{equation}
Here $\rho = \rho(t, x)$ is the fluid density function of
$(t,x)\in[0,+\infty)\times\mathbb{R}^{d}$ with $d\geq1$;
$\textbf{u}=(u^1,u^2,\cdot\cdot\cdot,u^{d})^{\top}$ denotes the
fluid velocity. The pressure $P$ is related to the density by
$P(n)$, which satisfies
$$P'(\rho)>0,\ \ \ \forall\rho>0.$$ For simplicity, a usual assumption is the $\gamma$-law: $P(\rho)=A\rho^{\gamma}(\gamma\geq
1)$, where $A>0$ is some physical constant, the adiabatic exponent
$\gamma>1$ corresponds to the isentropic flow and $\gamma=1$
corresponds to the isothermal flow.

Let us consider the Cauchy problem of the compressible Euler
equations (\ref{R-E78}) subject to the initial data
\begin{equation}
(\rho,\textbf{u})(0,x)=(\rho_{0},\textbf{u}_{0}).\label{R-E79}
\end{equation}

The system (\ref{R-E78}) describes the compressible gas flow passes
a porous medium and the medium induces a friction force,
proportional to the linear momentum in the opposite direction. It is
easy to show that (\ref{R-E78}) is a strict hyperbolic system when
the smooth solutions $(\rho,\textbf{u})$ away from vacuum. For the
one-dimensional space case, the global existence of a smooth
solution with small data was obtained first by Nishida \cite{N2},
and the asymptotic behavior of the smooth solution was studied in
many papers, see e.g. the excellent survey paper by Dafermos
\cite{D1} and the book by Hsiao \cite{H}. In multi-dimension space,
as shown by \cite{STW,WY}, if the initial data are small in some
Sobolev space $H^{s}(\mathbb{R}^{d})$ with $s>1+d/2$
($s\in\mathbb{Z}$), the damping term can prevent the development of
shock waves in finite time and the Cauchy problem
(\ref{R-E78})-(\ref{R-E79}) admits a unique global classical
solution. Furthermore, it is proved that the solution in \cite{STW}
has the $L^{\infty}$ convergence rate $(1+t)^{-3/2}(d=3)$ to the
constant background state and the optimal $L^{p}(1<p\leq\infty)$
convergence rate $(1+t)^{-d/2(1-1/p)}$ in general several dimensions
\cite{WY}, respectively. For the large-time behavior of solutions
with vacuum, the reader is referred to \cite{HP2,HMP}.

Recently, in \cite{FX}, Fang and the first author studied the
existence and asymptotic behavior of classical solutions in the
framework of Besov spaces with relatively \textit{lower} regularity.
From \cite{FX}, we find that the low-frequency estimate of density
for (\ref{R-E78}) is absent. Then, we overcame the difficulty by
using Gagliardo-Nirenberg-Sobolev inequality (see, e.g., \cite{E})
to obtain a global classical solution, regretfully, the result fails
to hold in the critical Besov spaces ($\sigma=1+d/2$). In fact, we
added a little regularity ($\sigma=1+d/2+\varepsilon,
\varepsilon>0$) to ensure the global existence of classical
solutions, which is essentially different from the Euler-Poisson
equations in \cite{X}.

Now, to obtain the desired result of critical regularity, we hope
that the compressible Euler equations (\ref{R-E78}) may enter in the
class of the hyperbolic balance laws (\ref{R-E1}) of partially
dissipative structure.

To this end, set
$$U=\left(
      \begin{array}{c}
        \rho \\
        \mathbf{m} \\
      \end{array}
    \right),\ \ \
    G(U)=\left(
      \begin{array}{c}
        0 \\
        -\mathbf{m} \\
      \end{array}
    \right)\in \mathcal{M}^{\bot}
$$
with $\mathbf{m}=\rho\textbf{u}$. Allow us to abuse the notations
here. Moreover, corresponding to the orthogonal decomposition
$\mathbb{R}^{d+1}=\mathcal{M}\oplus\mathcal{M}^{\bot}$ with
$\mbox{dim}\ \mathcal{M}=1$, we write
$U=\mathcal{P}U+(I-\mathcal{P})U$ with
$$\mathcal{P}U=\left(
      \begin{array}{c}
        \rho \\
        0 \\
      \end{array}
    \right),\ \ \ (I-\mathcal{P})U=\left(
      \begin{array}{c}
        0\\
        \mathbf{m} \\
      \end{array}
    \right),$$
where $\mathcal{P}$ is the orthogonal projection onto $\mathcal{M}$.
In addition, the constant equilibrium state is
$(\bar{\rho},0)^{\top}\in \mathcal{E}$.

Let us introduce the energy function
$$\eta(U):=\frac{|\mathbf{m}|^2}{2\rho}+h(\rho)\ \ \ \mbox{with}\ \
\ h'(\rho)=\int^{\rho}_{1}\frac{P'(s)}{s}ds.$$ Define
$$W=\left(
      \begin{array}{c}
        W_{1} \\
        W_{2} \\
      \end{array}
    \right):=\nabla\eta(U)=\left(
                             \begin{array}{c}
                               -\frac{|\mathbf{m}|^2}{2\rho^2}+h'(\rho) \\
                               \mathbf{m}/\rho \\
                             \end{array}
                           \right).
$$
Clearly, the mapping $U\rightarrow W$ is a diffeomorphism from
$\mathcal{O}_{U}:=\mathbb{R}^{+}\times \mathbb{R}^{d}$ onto its
range $\mathcal{O}_{W}$. Then for classical solutions
$(\rho,\textbf{u})$ away from vacuum, (\ref{R-E78}) is equivalent to
the following system
\begin{equation}
A^{0}(W)W_{t}+\sum_{j=1}^{d}A^{j}(W)W_{x_{j}}=H(W) \label{R-E80}
\end{equation}
with $$A^{0}(W)=\left(
          \begin{array}{cc}
            1 & W_{2}^{\top} \\
            W_{2} & W_{2}\otimes W_{2}+P'(\rho)I_{d} \\
          \end{array}
        \right),$$

$$A^{j}(W)=\left(
          \begin{array}{cc}
            W_{2j} & W_{2}^{\top}W_{2j}+P'(\rho)e_{j}^{\top} \\
            W_{2}W_{2j}+P'(\rho)e_{j} & W_{2j}(W_{2}\otimes W_{2}+P'(\rho)I_{d})+P'(\rho)(W_{2}\otimes e_{j}+e_{j}\otimes W_{2}) \\
          \end{array}
        \right),$$

$$H(W)=G(U(W))=\left(
                 \begin{array}{c}
                   0 \\
                   -P'(\rho)W_{2} \\
                 \end{array}
               \right)
,$$ where $I_{d}$ stands for the $d\times d$ unit matrix, and
$e_{j}$ is $d$-dimensional vector where the $j$th component is one,
others are zero. It follows from the definition of variable $W$ that
$h'(\rho)=W_{1}+|W_{2}|^2/2$, so $P'(\rho)$ can be viewed as a
function of $W$, since $\rho$ is the function of
$W_{1}+|W_{2}|^2/2$, i.e. of $W$.

It is not difficult to see that $A^{0}(W)$ is symmetric and positive
definite, and the matrices $A^{j}(W)(j=1,\cdot\cdot\cdot,d)$ are
symmetric. Obviously, $H(W)=0$ holds if and only if
$W\in\mathcal{M}$. In addition,
$$L(W)=-\nabla H(W)=\left(
          \begin{array}{cc}
            0 & 0 \\
            P''(\rho)\frac{\partial\rho}{\partial W_{1}}W_{2} &  P''(\rho)\frac{\partial\rho}{\partial W_{2}}\otimes W_{2}+P'(\rho)I_{d} \\
          \end{array}
        \right).
$$
For $W\in\mathcal{M}$, i.e. $W_{2}=0$, the matrix $L(W)$ is
symmetric and nonnegative definite, and its null space coincides
with $\mathcal{M}$. Hence, the system (\ref{R-E80}) is symmetric
dissipative in the sense of Definition \ref{defn1.2}.

Set $\tilde{\eta}(W)=\rho h'(\rho)-h(\rho)$, which can be viewed as
the function of $W$. Then,
$$D_{W}\tilde{\eta}(W)=\left(
          \begin{array}{cc}
            \frac{\partial\rho}{\partial W_{1}}h'(\rho)+\rho-h'(\rho) \frac{\partial\rho}{\partial W_{1}} \\
            h'(\rho)\frac{\partial\rho}{\partial W_{2}}+\rho W_{2}-h'(\rho) \frac{\partial\rho}{\partial W_{2}} \\
          \end{array}
        \right)=\left(
          \begin{array}{cc}
            \rho \\
            \rho\cdot\frac{\mathbf{m}}{\rho} \\
          \end{array}
        \right)=U,
$$
so it follows from the proof of Theorem \ref{thm1.1} in \cite{KY}
that $\langle U,W(U)\rangle-\tilde{\eta}(W)$ is an entropy function
in the sense of Definition \ref{defn1.1}. By a straightforward
computation, we find
$$\langle U,W(U)\rangle-\tilde{\eta}(W)=(\rho,\mathbf{m})\left(
                             \begin{array}{c}
                               -\frac{|\mathbf{m}|^2}{2\rho^2}+h'(\rho) \\
                               \mathbf{m}/\rho \\
                             \end{array}
                           \right)-(\rho h'(\rho)-h(\rho))=\eta(U).$$
Hence, the energy function $\eta(U)$ is just the desired entropy.
Furthermore, the associated entropy flux is
$$q(U)=\Big(\frac{|\mathbf{m}|^2}{2\rho^2}+\rho h'(\rho)\Big)\frac{\mathbf{m}}{\rho}.$$

Next, we show the linearized equations of the dissipative hyperbolic
system (\ref{R-E80}) satisfies the stability condition, see
Definition \ref{defn1.4}. Set $\bar{U}=(\bar{\rho},0)^{\top}\in
\mathcal{E}$. The corresponding equilibrium state for the variable
$W$ is $\bar{W}=(h'(\bar{\rho}),0)^{\top}\in \mathcal{M}$. The
linearized equations near $\bar{W}=(h'(\bar{\rho}),0)^{\top}$ reads
\begin{equation}
A^{0}W_{t}+\sum_{j=1}^{d}A^{j}W_{x_{j}}+L=0, \label{R-E81}
\end{equation}
where
$$A^{0}=A^{0}(\bar{W})=\left(
          \begin{array}{cc}
            1 & 0 \\
            0 & P'(\bar{\rho})I_{d} \\
          \end{array}
        \right),$$

$$A^{j}=A^{j}(\bar{W})=\left(
          \begin{array}{cc}
            0 & P'(\bar{\rho})e_{j}^{\top} \\
            P'(\bar{\rho})e_{j} & O \\
          \end{array}
        \right),$$

$$L=L(\bar{W})=\left(
                 \begin{array}{cc}
                   0 & 0\\
                  0& P'(\bar{\rho})I_{d} \\
                 \end{array}
               \right).$$
\textbf{Claim}: Let
$\varphi=(\widetilde{W}_{1},\widetilde{W}_{2})^{\top}\in
\mathbb{R}^{d+1},\ \ \lambda\in \mathbb{R}$ and $\omega \in
\mathbb{S}^{d-1}$. If $L\varphi=0$ and $\lambda
A^{0}\varphi+A(\omega)\varphi=0$, then $\varphi=0$.

Indeed, the condition $L\varphi=0$ gives $\widetilde{W}_{2}=0$. Then
$\varphi=(\widetilde{W}_{1},0)^{\top}$. For this $\varphi$, we
suppose that $\lambda A^{0}\varphi+A(\omega)\varphi=0$, which
implies $\lambda\widetilde{W}_{1}=0$ and
$P'(\bar{\rho})\widetilde{W}_{1}\omega=0$. This shows that
$\widetilde{W}_{1}=0$ and hence $\varphi=0$.

Over all, the compressible Euler equations with damping are included
in the class of dissipative hyperbolic equations and satisfy the
stability condition at $\bar{W}$. Therefore, from Theorems
\ref{thm1.4}-\ref{thm1.5}, the Cauchy problem
(\ref{R-E78})-(\ref{R-E79}) admits the local classical solutions for
general data in critical spaces.

\begin{thm} \label{thm5.1} Let $\bar{\rho}>0$ be a constant reference density.
Suppose that \ $\rho_{0}-\bar{\rho}$ and $\mathbf{m}_{0}\in
B^{\sigma}_{2,1}$ with $\rho_{0}>0$, then there exists a time
$T_{1}>0$ such that
\begin{itemize}
\item[(i)] Existence:  the
Cauchy problem (\ref{R-E78})-(\ref{R-E79}) has a unique solution
$(\rho,\mathbf{m})\in \mathcal{C}^{1}([0,T_{1}] \times
\mathbb{R}^{d})$ with $\rho>0$ for all $t\in [0,T_{1}]$ and
$$(\rho-\bar{\rho},\mathbf{m})\in \widetilde{\mathcal{C}}_{T_{1}}(B^{\sigma}_{2,1})
\cap\widetilde{\mathcal{C}}^{1}_{T_{0}}(B^{\sigma-1}_{2,1}).
$$

\item[(ii)] Blow-up criterion: there exists a constant $C_{0}>0$
such that the maximal time $T^{*}$ of existence of such a solution
can be bounded from below by
$T^{*}\geq\frac{C_{0}}{\|(\rho_{0}-\bar{\rho},\
\mathbf{m}_{0})\|_{B^{\sigma}_{2,1}}}.$ Moreover, if $T^{*}$ is
finite, then
$$\limsup_{t\rightarrow T^{*}}\|(\rho-\bar{\rho},\
\mathbf{m})\|_{B^{\sigma}_{2,1}}=\infty$$ if and only if
$$\int^{T^{*}}_{0}\|(\nabla \rho,\nabla\mathbf{m})\|_{L^{\infty}}dt=\infty.$$
\end{itemize}
\end{thm}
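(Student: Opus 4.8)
The plan is to obtain Theorem \ref{thm5.1} as a direct specialization of the abstract local well-posedness result, Theorem \ref{thm1.4}, to the system (\ref{R-E78}). Almost all of the structural work has already been done above: we have exhibited the energy $\eta(U)=|\mathbf{m}|^2/(2\rho)+h(\rho)$ and verified that it is an entropy in the sense of Definition \ref{defn1.1} (strict convexity on $\mathcal{O}_U=\mathbb{R}^{+}\times\mathbb{R}^{d}$, symmetry of $D_UF^j(D^2_U\eta)^{-1}$, the equilibrium characterization, and symmetry/nonpositivity of $D_UG(D^2_U\eta)^{-1}$ with null space $\mathcal{M}$), and we have identified $\bar U=(\bar\rho,0)^{\top}\in\mathcal{E}$. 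So the only thing that remains is to check that the data $(\rho_0,\mathbf{m}_0)$ fit the hypotheses of Theorem \ref{thm1.4} and then to transcribe its conclusion into the variables $(\rho,\mathbf{m})$.

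First I would record that $U_0-\bar U=(\rho_0-\bar\rho,\mathbf{m}_0)\in B^{\sigma}_{2,1}$ by assumption, and that $U_0$ takes values in a compact subset of $\mathcal{O}_U$. The latter is the only point that needs an argument: since $\sigma=1+d/2>d/2$, the embedding $B^{\sigma}_{2,1}\hookrightarrow\mathcal{C}_0$ of Lemma \ref{lem2.2} shows that $\rho_0-\bar\rho$ is continuous and tends to $0$ at infinity, so $\rho_0$ is bounded above and, together with the hypothesis $\rho_0>0$ on all of $\mathbb{R}^d$, bounded below by a positive constant; likewise $\mathbf{m}_0$ is bounded. Hence $(\rho_0,\mathbf{m}_0)$ ranges in a compact subset of $\mathbb{R}^{+}\times\mathbb{R}^{d}=\mathcal{O}_U$, away from vacuum, and Theorem \ref{thm1.4} applies. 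It yields a time $T_1>0$, a unique $U=(\rho,\mathbf{m})\in\mathcal{C}^1([0,T_1]\times\mathbb{R}^{d})$ with $U-\bar U\in\widetilde{\mathcal{C}}_{T_1}(B^{\sigma}_{2,1})\cap\widetilde{\mathcal{C}}^{1}_{T_1}(B^{\sigma-1}_{2,1})$, the lower bound $T^{*}\geq C_0/\|U_0-\bar U\|_{B^{\sigma}_{2,1}}$, and the continuation criterion. By the convention $\|(f,g)\|_X=\|f\|_X+\|g\|_X$ we have $\|U_0-\bar U\|_{B^{\sigma}_{2,1}}=\|(\rho_0-\bar\rho,\mathbf{m}_0)\|_{B^{\sigma}_{2,1}}$, and since $\nabla U=(\nabla\rho,\nabla\mathbf{m})$ the condition $\int_0^{T^{*}}\|\nabla U\|_{L^{\infty}}\,dt=\infty$ is exactly $\int_0^{T^{*}}\|(\nabla\rho,\nabla\mathbf{m})\|_{L^{\infty}}\,dt=\infty$; this gives statement (ii). Finally $\rho>0$ on $[0,T_1]$ because the solution takes values in $\mathcal{O}_U$, equivalently by the continuity of $t\mapsto\rho(t,\cdot)$ into $L^\infty$ and shrinking $T_1$ if needed so that $\rho$ stays in a neighborhood of the compact range of $\rho_0$, exactly as in Lemma \ref{lem3.1} and Proposition \ref{prop3.1}.

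The only genuinely delicate point — hence the main obstacle — is the vacuum issue: the abstract theory is built over an arbitrary open convex state space, and one must make sure that specializing to $\mathcal{O}_U=\mathbb{R}^{+}\times\mathbb{R}^{d}$ does not allow $\rho$ to reach zero on $[0,T_1]$ (which would also destroy the strict convexity of $\eta$ and the hyperbolicity of (\ref{R-E78})). This is already incorporated into Theorem \ref{thm1.4} through the compact-range hypothesis on the data and the $\mathcal{C}^1$ regularity of the solution, so no new energy estimate is required; it is purely a matter of bookkeeping with the neighborhoods $\mathcal{O}_0\subset\mathcal{O}_1\subset\mathcal{O}_U$ introduced in Section \ref{sec:3}.
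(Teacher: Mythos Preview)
Your proposal is correct and follows essentially the same approach as the paper, which simply states that since (\ref{R-E78}) has been shown to possess an entropy in the sense of Definition \ref{defn1.1} and to satisfy the stability condition at $\bar W$, Theorem \ref{thm5.1} follows directly from Theorem \ref{thm1.4}. Your treatment is in fact more detailed than the paper's, in that you spell out why the compact-range hypothesis on the data holds (via $B^{\sigma}_{2,1}\hookrightarrow\mathcal{C}_0$) and why $\rho>0$ persists on $[0,T_1]$.
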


Under the assumption of small initial data, we further obtain
global-in-time classical solutions to (\ref{R-E78})-(\ref{R-E79}) in
critical spaces.

\begin{thm}\label{thm5.2}
Let $\bar{\rho}>0$ be a constant reference density. Suppose that \
$\rho_{0}-\bar{\rho}$ and $\mathbf{m}_{0}\in B^{\sigma}_{2,1}$,
there exists a positive constant $\tilde{\delta}_{0}$ such that if
$$\|(\rho_{0}-\bar{\rho},\mathbf{m}_{0})\|_{B^{\sigma}_{2,1}}\leq
\tilde{\delta}_{0}$$ with $\mathbf{m}_{0}=\rho_{0}\mathbf{u}_{0}$,
then the Cauchy problem (\ref{R-E78})-(\ref{R-E79}) has a unique
global solution $(\rho,\mathbf{m})$ satisfying
\begin{eqnarray*}
(\rho,\mathbf{m})\in \mathcal{C}^{1}(\mathbb{R}^{+}\times
\mathbb{R}^{d})
\end{eqnarray*}and
\begin{eqnarray*}
(\rho-\bar{\rho},\mathbf{m}) \in
\widetilde{\mathcal{C}}(B^{\sigma}_{2,1})\cap
\widetilde{\mathcal{C}}^1(B^{\sigma-1}_{2,1}).
\end{eqnarray*}
Furthermore, it holds that\begin{eqnarray*}
&&\|(\rho-\bar{\rho},\mathbf{m})\|_{\widetilde{L}^\infty(B^{\sigma}_{2,1})}
+\tilde{\mu}_{0}\Big(\|\mathbf{m}\|_{\widetilde{L}^2(B^{\sigma}_{2,1})}
+\|(\nabla\rho,\nabla\mathbf{m})\|_{\widetilde{L}^2(B^{\sigma-1}_{2,1})}\Big)
\nonumber\\&\leq& \tilde{C}_{0}\|(\rho_{0}-\bar{\rho},
\mathbf{m}_{0})\|_{B^{\sigma}_{2,1}},
\end{eqnarray*}
where $\tilde{C}_{0},\tilde{\mu}_{0}$ are some positive constants.
\end{thm}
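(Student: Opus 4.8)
\medskip
\noindent\textbf{Proof proposal.} The plan is to obtain Theorem \ref{thm5.2} as a direct consequence of the general global existence result, Theorem \ref{thm1.5}, after checking that the compressible Euler system with damping (\ref{R-E78}) fits the abstract framework (\ref{R-E1}). Most of this verification has already been carried out above: with $U=(\rho,\mathbf{m})^{\top}$ and $G(U)=(0,-\mathbf{m})^{\top}\in\mathcal{M}^{\bot}$, the energy function $\eta(U)=|\mathbf{m}|^2/(2\rho)+h(\rho)$ is an entropy in the sense of Definition \ref{defn1.1}, the map $W=\nabla\eta(U)$ transforms (\ref{R-E78}) into the symmetric dissipative system (\ref{R-E80}), and the linearization at $\bar W=(h'(\bar\rho),0)^{\top}$ was shown to satisfy the [SK] stability condition of Definition \ref{defn1.4}. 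By Theorem \ref{thm1.1} this entropy yields the normal form (\ref{R-E5}), and by the remark following Theorem \ref{thm1.3} the [SK] condition at $\bar W$ is equivalent to the [SK] condition for (\ref{R-E5}) at the corresponding constant state $\bar V$. Thus all hypotheses of Theorem \ref{thm1.5} hold at $\bar U=(\bar\rho,0)^{\top}\in\mathcal{E}$.

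\medskip
The next step is to read off the hypotheses of Theorem \ref{thm1.5} in the present notation. By assumption $U_0-\bar U=(\rho_0-\bar\rho,\mathbf{m}_0)\in B^{\sigma}_{2,1}$; using $B^{\sigma}_{2,1}\hookrightarrow\mathcal{C}_0$ from Lemma \ref{lem2.2}(5), the smallness condition $\|(\rho_0-\bar\rho,\mathbf{m}_0)\|_{B^{\sigma}_{2,1}}\le\tilde\delta_0$ forces $\|\rho_0-\bar\rho\|_{L^{\infty}}$ to be small, so for $\tilde\delta_0\le\delta_0$ small enough one has $\bar\rho/2\le\rho_0\le 3\bar\rho/2$, i.e. $U_0$ takes values in a compact subset of $\mathcal{O}_U=\mathbb{R}^{+}\times\mathbb{R}^{d}$, on which $F^j$ and $G$ are smooth. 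Theorem \ref{thm1.5} then provides a unique global solution $U\in\mathcal{C}^1(\mathbb{R}^{+}\times\mathbb{R}^{d})$ with $U-\bar U\in\widetilde{\mathcal{C}}(B^{\sigma}_{2,1})\cap\widetilde{\mathcal{C}}^1(B^{\sigma-1}_{2,1})$ satisfying the estimate (\ref{R-E8}).

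\medskip
It then remains to translate (\ref{R-E8}) into the concrete variables. For the decomposition $\mathbb{R}^{d+1}=\mathcal{M}\oplus\mathcal{M}^{\bot}$ with $\mathcal{M}=\{(\ast,0)^{\top}\}$ one has $\mathcal{P}U=(\rho,0)^{\top}$ and hence $(I-\mathcal{P})U=(0,\mathbf{m})^{\top}$, so that $\|(I-\mathcal{P})U\|_{\widetilde{L}^2(B^{\sigma}_{2,1})}=\|\mathbf{m}\|_{\widetilde{L}^2(B^{\sigma}_{2,1})}$, $\|\nabla U\|_{\widetilde{L}^2(B^{\sigma-1}_{2,1})}=\|(\nabla\rho,\nabla\mathbf{m})\|_{\widetilde{L}^2(B^{\sigma-1}_{2,1})}$ and $\|U-\bar U\|_{\widetilde{L}^\infty(B^{\sigma}_{2,1})}=\|(\rho-\bar\rho,\mathbf{m})\|_{\widetilde{L}^\infty(B^{\sigma}_{2,1})}$. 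Inserting these identities into (\ref{R-E8}) gives exactly the claimed inequality, with $\tilde C_0=C_0$ and $\tilde\mu_0=\mu_0$. Since this global bound together with $B^{\sigma}_{2,1}\hookrightarrow\mathcal{C}_0$ keeps $\rho$ uniformly away from vacuum, $\mathbf{u}=\mathbf{m}/\rho$ inherits the same regularity and $(\rho,\mathbf{m})$ is the unique classical solution of (\ref{R-E78})--(\ref{R-E79}).

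\medskip
Because the structural content — exhibiting the entropy and verifying the [SK] condition — is already in place, the only point requiring care is that $\rho$ must be known a priori to stay away from $0$ over the whole time interval; here this is immediate from the global energy estimate and the embedding $B^{\sigma}_{2,1}\hookrightarrow L^{\infty}$. Everything else is bookkeeping, so I anticipate no genuine obstacle: Theorem \ref{thm5.2} is in essence a corollary of Theorem \ref{thm1.5}.
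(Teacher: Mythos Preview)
Your proposal is correct and follows exactly the paper's approach: Theorem \ref{thm5.2} is presented in the paper as a direct application of Theorem \ref{thm1.5} once the entropy $\eta(U)$ and the [SK] condition at $\bar W$ have been verified in Section \ref{sec:5}, with no separate proof given. Your write-up is in fact more explicit than the paper's, spelling out the translation $(I-\mathcal{P})U=(0,\mathbf{m})^{\top}$ and the vacuum-avoidance argument via $B^{\sigma}_{2,1}\hookrightarrow L^{\infty}$, both of which the paper leaves implicit.
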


\begin{rem}
In comparison with the previous works \cite{STW,FX}, Theorem
\ref{thm5.1} indicates the critical regularity of global classical
solutions. In addition, the result is valid for the \textit{general}
pressure in \textit{arbitrary} space dimensions (Due to the
techniques used, the papers \cite{STW,FX} were denoted to the system
(\ref{R-E78})-(\ref{R-E79}) with \textit{$\gamma$-law} pressure in
at least \textit{three} space dimensions).
\end{rem}

From Corollary \ref{cor1.2}, we also get the corresponding
large-time behavior of $(\rho,\mathbf{m})$.
\begin{cor}
Let $(\rho,\mathbf{m})$ be the solution in Theorem \ref{thm5.1}.
Then
$$\|\nabla\rho(\cdot,t)\|_{B^{\sigma-1-\varepsilon}_{2,1}}\rightarrow
0,$$

$$\|\rho(\cdot,t)-\bar{\rho}\|_{B^{\sigma-1-\varepsilon}_{p,2}}\rightarrow 0 \ \
\Big(p=\frac{2d}{d-2},\ d>2\Big),$$ and
$$\|\mathbf{m}(\cdot,t)\|_{B^{\sigma-\varepsilon}_{2,1}}\rightarrow
0$$ for any  $\varepsilon>0$, as $t\rightarrow +\infty$.
\end{cor}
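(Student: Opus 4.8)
The plan is to obtain this corollary as a direct specialization of Corollary \ref{cor1.2} to the concrete model treated in this section, so that the only work left is bookkeeping. Recall that above we have checked that the compressible Euler system with damping (\ref{R-E78}) belongs to the class of partially dissipative hyperbolic balance laws (\ref{R-E1}): it admits the entropy $\eta(U)$ in the sense of Definition \ref{defn1.1}, and its linearization at $\bar{W}=(h'(\bar{\rho}),0)^{\top}\in\mathcal{M}$ satisfies the [SK] stability condition of Definition \ref{defn1.4}. Therefore, under the smallness hypothesis, Theorem \ref{thm1.5} furnishes the global solution $U=(\rho,\mathbf{m})^{\top}$ together with the energy estimate (\ref{R-E8}) (this is precisely the content of Theorem \ref{thm5.2}), so that in particular the hypotheses of Corollary \ref{cor1.2} are met for this $U$.

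It then remains to rewrite the three limits of Corollary \ref{cor1.2} in the variables $(\rho,\mathbf{m})$. With the identifications fixed in this section, $U=(\rho,\mathbf{m})^{\top}$, $\bar{U}=(\bar{\rho},0)^{\top}$, and, relative to $\mathbb{R}^{d+1}=\mathcal{M}\oplus\mathcal{M}^{\bot}$ with $\mathrm{dim}\,\mathcal{M}=1$,
$$\mathcal{P}U=\left(\begin{array}{c}\rho\\0\end{array}\right),\qquad (I-\mathcal{P})U=\left(\begin{array}{c}0\\\mathbf{m}\end{array}\right),$$
the zero blocks contribute nothing to the Besov norms, so that $\|\nabla\mathcal{P}U(\cdot,t)\|_{B^{\sigma-1-\varepsilon}_{2,1}}=\|\nabla\rho(\cdot,t)\|_{B^{\sigma-1-\varepsilon}_{2,1}}$, $\|\mathcal{P}U(\cdot,t)-\bar{U}\|_{B^{\sigma-1-\varepsilon}_{p,2}}=\|\rho(\cdot,t)-\bar{\rho}\|_{B^{\sigma-1-\varepsilon}_{p,2}}$, and $\|(I-\mathcal{P})U(\cdot,t)\|_{B^{\sigma-\varepsilon}_{2,1}}=\|\mathbf{m}(\cdot,t)\|_{B^{\sigma-\varepsilon}_{2,1}}$. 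Feeding these equalities into the conclusion of Corollary \ref{cor1.2} yields exactly the three asserted convergences to $0$ as $t\to+\infty$, for every $\varepsilon>0$, with $p=2d/(d-2)$ and $d>2$ in the middle one.

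I do not expect a genuine obstacle in this last step: the analytic effort has already been spent, first in the proof of Corollary \ref{cor1.2} (which rests on the uniform bound (\ref{R-E8}), the absorption $\nabla\mathcal{P}U\in L^2_{t}(B^{\sigma-1}_{2,1})$ with $\nabla\mathcal{P}U_{t}\in L^2_{t}(B^{\sigma-2}_{2,1})$, an interpolation argument, and the Gagliardo-Nirenberg-Sobolev inequality from \cite{E}), and second in verifying, earlier in this section, that (\ref{R-E78}) possesses an entropy and obeys the [SK] condition. The only point deserving a line of care — and hence the nearest thing to a ``main obstacle'' — is the block identification of the vector-valued Besov norms above, which however follows at once from $\mathcal{P}$ being the orthogonal projection onto the density coordinate.
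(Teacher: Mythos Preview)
Your proposal is correct and matches the paper's approach exactly: the paper itself offers no separate proof of this corollary, simply prefacing the statement with ``From Corollary \ref{cor1.2}, we also get the corresponding large-time behavior of $(\rho,\mathbf{m})$.'' Your only addition is to spell out the identification of $\mathcal{P}U$, $(I-\mathcal{P})U$ and $\bar{U}$ in the Euler variables, which is precisely the bookkeeping the paper leaves implicit.
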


\section{Appendix}\setcounter{equation}{0}\label{sec:6}
In this section, we first investigate the connection between the
homogeneous Chemin-Lerner's spaces and inhomogeneous Chemin-Lerner's
spaces. Precisely, we have
\begin{prop}\label{prop6.1}
Let $s>0,\ 1\leq \theta, p,r\leq\infty$. When $\theta\geq r$, it
holds that
\begin{eqnarray}
L^{\theta}_{T}(L^{p})\cap
\widetilde{L}^{\theta}_{T}(\dot{B}^{s}_{p,r})=\widetilde{L}^{\theta}_{T}(B^{s}_{p,r})\label{R-E82}
\end{eqnarray}
for any $T>0$.
\end{prop}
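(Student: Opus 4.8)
The plan is to establish the norm equivalence
$$\|f\|_{\widetilde{L}^{\theta}_{T}(B^{s}_{p,r})}\approx\|f\|_{L^{\theta}_{T}(L^{p})}+\|f\|_{\widetilde{L}^{\theta}_{T}(\dot{B}^{s}_{p,r})},$$
from which the claimed identity of spaces follows (when $p<\infty$ this is unambiguous, since $0$ is the only polynomial lying in $L^{p}(\mathbb{R}^{d})$, so the quotient by $\mathbf{P}$ in the definition of the homogeneous spaces is harmless). The guiding observation is that, with the conventions of Section~\ref{sec:2}, $\Delta_{q}=\dot{\Delta}_{q}$ for every $q\geq 0$, so the contributions of the frequencies $q\geq 0$ to the two Chemin--Lerner norms literally coincide; the whole matter is thus about low frequencies, where $\widetilde{L}^{\theta}_{T}(B^{s}_{p,r})$ carries only the single block $\Delta_{-1}f=\Psi\ast f$, whereas $\widetilde{L}^{\theta}_{T}(\dot{B}^{s}_{p,r})$ carries the infinite tail $(\dot{\Delta}_{q}f)_{q\leq-1}$ and the intersection supplies, in addition, the plain quantity $\|f\|_{L^{\theta}_{T}(L^{p})}$.

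For the inclusion $L^{\theta}_{T}(L^{p})\cap\widetilde{L}^{\theta}_{T}(\dot{B}^{s}_{p,r})\subseteq\widetilde{L}^{\theta}_{T}(B^{s}_{p,r})$ I would just reassemble the blocks: the part $q\geq 0$ contributes $\sum_{q\geq 0}(2^{qs}\|\Delta_{q}f\|_{L^{\theta}_{T}(L^{p})})^{r}=\sum_{q\geq 0}(2^{qs}\|\dot{\Delta}_{q}f\|_{L^{\theta}_{T}(L^{p})})^{r}\leq\|f\|^{r}_{\widetilde{L}^{\theta}_{T}(\dot{B}^{s}_{p,r})}$, while the single block $q=-1$ satisfies $\|\Delta_{-1}f\|_{L^{\theta}_{T}(L^{p})}=\|\Psi\ast f\|_{L^{\theta}_{T}(L^{p})}\leq\|\Psi\|_{L^{1}}\|f\|_{L^{\theta}_{T}(L^{p})}$ by Young's inequality; adding these two bounds gives the inclusion with the right constant.

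For the reverse inclusion, take $f\in\widetilde{L}^{\theta}_{T}(B^{s}_{p,r})$. Since $s>0$, the weights $(2^{-qs})_{q\geq-1}$ belong to $\ell^{r'}$, so by the triangle inequality in $L^{\theta}_{T}(L^{p})$ and then H\"{o}lder in $q$,
$$\|f\|_{L^{\theta}_{T}(L^{p})}\leq\sum_{q\geq-1}\|\Delta_{q}f\|_{L^{\theta}_{T}(L^{p})}=\sum_{q\geq-1}2^{-qs}(2^{qs}\|\Delta_{q}f\|_{L^{\theta}_{T}(L^{p})})\leq\|(2^{-qs})_{q\geq-1}\|_{\ell^{r'}}\,\|f\|_{\widetilde{L}^{\theta}_{T}(B^{s}_{p,r})}<\infty,$$
which puts $f\in L^{\theta}_{T}(L^{p})$ and makes $f=\Delta_{-1}f+\sum_{q\geq 0}\Delta_{q}f$ a series converging in $L^{\theta}_{T}(L^{p})$. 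For the homogeneous low tail I would use spectral localisation: the multiplier $\mathcal{F}\Psi$ of $\Delta_{-1}$ equals $1$ on the support of $\mathcal{F}\Phi_{q}$ whenever $q\leq-2$, so $\dot{\Delta}_{q}f=\Phi_{q}\ast\Delta_{-1}f$ and hence $\|\dot{\Delta}_{q}f\|_{L^{p}}\leq\|\Phi_{0}\|_{L^{1}}\|\Delta_{-1}f\|_{L^{p}}$; the junction index $q=-1$ needs separate care, since $A_{-1}$ overlaps $A_{0}$ and on $A_{-1}$ one has $\mathcal{F}\Psi+\mathcal{F}\Phi_{0}\equiv 1$, giving $\dot{\Delta}_{-1}f=\Phi_{-1}\ast(\Delta_{-1}f+\Delta_{0}f)$. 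Feeding these in and summing the convergent geometric series $\sum_{q\leq-1}2^{qsr}<\infty$ (again by $s>0$) bounds $\sum_{q\leq-1}(2^{qs}\|\dot{\Delta}_{q}f\|_{L^{\theta}_{T}(L^{p})})^{r}$ by $C(\|\Delta_{-1}f\|^{r}_{L^{\theta}_{T}(L^{p})}+\|\Delta_{0}f\|^{r}_{L^{\theta}_{T}(L^{p})})\leq C\|f\|^{r}_{\widetilde{L}^{\theta}_{T}(B^{s}_{p,r})}$; together with the matching of the $q\geq 0$ parts this yields $\|f\|_{L^{\theta}_{T}(L^{p})}+\|f\|_{\widetilde{L}^{\theta}_{T}(\dot{B}^{s}_{p,r})}\leq C\|f\|_{\widetilde{L}^{\theta}_{T}(B^{s}_{p,r})}$.

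The obstacles I anticipate are of a bookkeeping nature rather than analytic. First, one must track carefully which dyadic annuli the ball multiplier $\Psi\ast(\cdot)$ actually meets, so that the clean identity $\dot{\Delta}_{q}f=\dot{\Delta}_{q}\Delta_{-1}f$ is invoked only for $q\leq-2$ and the borderline $q=-1$ is absorbed into the two blocks $\Delta_{-1}f$ and $\Delta_{0}f$. Second, when reconstructing $f$ from its pieces one must check that the equivalence class of $f$ in $L^{\theta}(0,T;\mathcal{S}'_{0})$ appearing on the homogeneous side agrees with its genuine $L^{\theta}_{T}(L^{p})$ representative, and it is here that I expect the hypothesis $\theta\geq r$ to be used: by Minkowski's inequality (Remark~\ref{rem2.2}) the mixed norm $\|\cdot\|_{\widetilde{L}^{\theta}_{T}}$ then dominates the corresponding $L^{\theta}_{T}$-in-time Besov norm, which allows one to interchange the time integration with the $\ell^{r}$-summation in $q$ during the reassembly; for $\theta<r$ the homogeneous Chemin--Lerner space is strictly larger than its $L^{\theta}_{T}$ counterpart and this step is no longer automatic.
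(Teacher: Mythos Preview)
Your proof is correct and follows essentially the same high/low-frequency splitting as the paper: the blocks $q\geq 0$ match by $\Delta_{q}=\dot{\Delta}_{q}$, the single inhomogeneous low block $\Delta_{-1}f=\Psi\ast f$ is controlled by $\|f\|_{L^{\theta}_{T}(L^{p})}$ via Young, and the homogeneous low tail $(\dot{\Delta}_{q}f)_{q\leq-1}$ is handled through $\dot{\Delta}_{q}f=\Phi_{q}\ast\Delta_{-1}f$ (for $q\leq-2$) plus a geometric series in $2^{qs}$.

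The one point worth flagging is the step where you show $\widetilde{L}^{\theta}_{T}(B^{s}_{p,r})\subset L^{\theta}_{T}(L^{p})$. The paper does this via Remark~\ref{rem2.2} (Minkowski) followed by the chain $\widetilde{L}^{\theta}_{T}(B^{s}_{p,r})\hookrightarrow L^{\theta}_{T}(B^{s}_{p,r})\hookrightarrow L^{\theta}_{T}(B^{0}_{p,1})\hookrightarrow L^{\theta}_{T}(L^{p})$, and it is precisely this first Minkowski step that consumes the hypothesis $\theta\geq r$. Your argument instead bounds $\sum_{q\geq-1}\|\Delta_{q}f\|_{L^{\theta}_{T}(L^{p})}$ directly by H\"{o}lder in $q$ against the $\ell^{r'}$ sequence $(2^{-qs})_{q\geq-1}$, which is finite simply because $s>0$; this works for \emph{every} pair $(\theta,r)$. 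So, contrary to your closing paragraph, your actual estimates nowhere use $\theta\geq r$, and your proof in fact establishes the proposition without that restriction. The paper's route is slightly quicker to state but imports the extra hypothesis; yours is a line longer but more elementary and more general.
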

\begin{proof} Without loss of generality, we deal with $r<\infty$ only.
In order to prove (\ref{R-E82}), we first show for all $s\in
\mathbb{R}$
$$L^{\theta}_{T}(L^{p})\cap
\widetilde{L}^{\theta}_{T}(\dot{B}^{s}_{p,r})\subset\widetilde{L}^{\theta}_{T}(B^{s}_{p,r}).$$
Indeed, if $f\in L^{\theta}_{T}(L^{p})$, then
$$\|\Delta_{-1}f\|_{L^{\theta}_{T}(L^{p})}=\|\Psi\ast f\|_{L^{\theta}_{T}(L^{p})}\leq C\|f\|_{L^{\theta}_{T}(L^{p})}.$$
On the other hand, if
$f\in\widetilde{L}^{\theta}_{T}(\dot{B}^{s}_{p,r})$, then
$$\Big\{\sum_{q=-\infty}^{\infty}\Big(2^{qs}\|\dot{\Delta}_{q}f\|_{L^{\theta}_{T}(L^{p})}\Big)^{r}\Big\}^{1/r}
=\Big\{\sum_{q=-\infty}^{\infty}\Big(2^{qs}\|\Phi_{q}\ast
f\|_{L^{\theta}_{T}(L^{p})}\Big)^{r}\Big\}^{1/r}<+\infty.$$ Of
course,
$$\Big\{\sum_{q=0}^{\infty}\Big(2^{qs}\|\Phi_{q}\ast
f\|_{L^{\theta}_{T}(L^{p})}\Big)^{r}\Big\}^{1/r}=\Big\{\sum_{q=0}^{\infty}\Big(2^{qs}\|\Delta_{q}
f\|_{L^{\theta}_{T}(L^{p})}\Big)^{r}\Big\}^{1/r}<+\infty.$$ Hence,
we arrive at
$$\Big\{\sum_{q=-1}^{\infty}\Big(2^{qs}\|\Delta_{q}
f\|_{L^{\theta}_{T}(L^{p})}\Big)^{r}\Big\}^{1/r}<+\infty.$$ That is,
$f\in\widetilde{L}^{\theta}_{T}(B^{s}_{p,r}).$

Conversely, thanks to Remark \ref{rem2.2} ($\theta\geq r$) and
$s>0$, we have the following imbeddings
$$\widetilde{L}^{\theta}_{T}(B^{s}_{p,r})\hookrightarrow L^{\theta}_{T}(B^{s}_{p,r})
\hookrightarrow L^{\theta}_{T}(B^{0}_{p,1})\hookrightarrow
L^{\theta}_{T}(L^{p}), $$ so if $f\in
\widetilde{L}^{\theta}_{T}(B^{s}_{p,r})$ then $f\in
L^{\theta}_{T}(L^{p})$.

In addition, for $q<0$, we obtain
$$\|\dot{\Delta}_{q}
f\|_{L^{\theta}_{T}(L^{p})}=\cases{\|\Phi_{q}\ast\Psi\ast
f\|_{L^{\theta}_{T}(L^{p})}\leq C\|\Delta_{-1}
f\|_{L^{\theta}_{T}(L^{p})},\ \ \ q<-1,
\cr\|\dot{\Delta}_{-1}f\|_{L^{\theta}_{T}(L^{p})}\leq
C\|f\|_{L^{\theta}_{T}(L^{p})}, \ \ \ q=-1.}
$$ Hence, when $s>0$, it is not difficult to get
\begin{eqnarray*}\|f\|_{\widetilde{L}^{\theta}_{T}(\dot{B}^{s}_{p,r})}
&=&\Big\{\sum_{q=-\infty}^{\infty}\Big(2^{qs}\|\dot{\Delta}_{q}f\|_{L^{\theta}_{T}(L^{p})}\Big)^{r}\Big\}^{1/r}
\\&\leq&
\Big\{\sum_{q<0}\Big(2^{qs}\|\dot{\Delta}_{q}f\|_{L^{\theta}_{T}(L^{p})}\Big)^{r}\Big\}^{1/r}
+\Big\{\sum_{q\geq0}\Big(2^{qs}\|\dot{\Delta}_{q}f\|_{L^{\theta}_{T}(L^{p})}\Big)^{r}\Big\}^{1/r}\\&\leq&
C\sum_{q<-1}2^{qs}\|\dot{\Delta}_{q}f\|_{L^{\theta}_{T}(L^{p})}+C2^{-s}\|\dot{\Delta}_{-1}f\|_{L^{\theta}_{T}(L^{p})}
+\Big\{\sum_{q\geq0}\Big(2^{qs}\|\dot{\Delta}_{q}f\|_{L^{\theta}_{T}(L^{p})}\Big)^{r}\Big\}^{1/r}\\&\leq&
C\|\Delta_{-1}f\|_{L^{\theta}_{T}(L^{p})}+\Big\{\sum_{q\geq0}\Big(2^{qs}\|\Delta_{q}f\|_{L^{\theta}_{T}(L^{p})}\Big)^{r}\Big\}^{1/r}
+C\|f\|_{L^{\theta}_{T}(L^{p})}
\\&\leq& C\|f\|_{\widetilde{L}^{\theta}_{T}(B^{s}_{p,r})}.
\end{eqnarray*}
Therefore,  $f\in
L^{\theta}_{T}(L^{p})\cap\widetilde{L}^{\theta}_{T}(\dot{B}^{s}_{p,r})$
if $f\in \widetilde{L}^{\theta}_{T}(B^{s}_{p,r})$.
\end{proof}

\begin{cor}\label{cor6.1}
Let the assumptions of Proposition \ref{prop6.1} be fulfilled. Then
$$\|f\|_{\widetilde{L}^{\theta}_{T}(B^{s}_{p,r})}\approx\|f\|_{L^{\theta}_{T}(L^{p})}
+\|f\|_{\widetilde{L}^{\theta}_{T}(\dot{B}^{s}_{p,r})}.$$
\end{cor}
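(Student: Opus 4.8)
The plan is to read off the norm equivalence directly from the two inclusions established in the proof of Proposition \ref{prop6.1}, keeping track of the constants implicit there rather than appealing to an abstract open–mapping argument. In one direction, to bound $\|f\|_{\widetilde{L}^{\theta}_{T}(B^{s}_{p,r})}$ by $C\big(\|f\|_{L^{\theta}_{T}(L^{p})}+\|f\|_{\widetilde{L}^{\theta}_{T}(\dot{B}^{s}_{p,r})}\big)$, I would split the defining $\ell^r$-sum for $\|f\|_{\widetilde{L}^{\theta}_{T}(B^{s}_{p,r})}$ into the single low block $q=-1$ and the high blocks $q\ge 0$. The low block is controlled by $\|\Delta_{-1}f\|_{L^{\theta}_{T}(L^{p})}=\|\Psi\ast f\|_{L^{\theta}_{T}(L^{p})}\le C\|f\|_{L^{\theta}_{T}(L^{p})}$ via Young's inequality, while for $q\ge0$ one has $\Delta_q f=\Phi_q\ast f=\dot{\Delta}_q f$, so the high-frequency part of the sum is at most $\|f\|_{\widetilde{L}^{\theta}_{T}(\dot{B}^{s}_{p,r})}$; adding the two contributions gives this half of the estimate (note $s>0$ is not even needed here).

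For the reverse bound I would reuse verbatim the two embedding arguments from the converse part of the proof of Proposition \ref{prop6.1}. The chain $\widetilde{L}^{\theta}_{T}(B^{s}_{p,r})\hookrightarrow L^{\theta}_{T}(B^{s}_{p,r})\hookrightarrow L^{\theta}_{T}(B^{0}_{p,1})\hookrightarrow L^{\theta}_{T}(L^{p})$, valid because $\theta\ge r$ by Remark \ref{rem2.2} and $s>0$ by Lemma \ref{lem2.2}, yields $\|f\|_{L^{\theta}_{T}(L^{p})}\le C\|f\|_{\widetilde{L}^{\theta}_{T}(B^{s}_{p,r})}$; and the computation displayed at the end of that proof, which estimates $\|\dot{\Delta}_q f\|_{L^{\theta}_{T}(L^{p})}$ for $q<0$ by $C\|\Delta_{-1}f\|_{L^{\theta}_{T}(L^{p})}$ (summing the geometric series $\sum_{q<0}2^{qs}$ needs $s>0$) while leaving the blocks $q\ge0$ untouched, yields $\|f\|_{\widetilde{L}^{\theta}_{T}(\dot{B}^{s}_{p,r})}\le C\|f\|_{\widetilde{L}^{\theta}_{T}(B^{s}_{p,r})}$. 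Summing these two inequalities produces $\|f\|_{L^{\theta}_{T}(L^{p})}+\|f\|_{\widetilde{L}^{\theta}_{T}(\dot{B}^{s}_{p,r})}\le C\|f\|_{\widetilde{L}^{\theta}_{T}(B^{s}_{p,r})}$, and combining with the first direction gives the asserted equivalence $\approx$.

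There is essentially no obstacle: the corollary is a quantitative repackaging of Proposition \ref{prop6.1}. The only point deserving (mild) care is to check that every constant above depends solely on $s,p,r,\theta,d$ and is independent of $T$, which is immediate since the Littlewood–Paley projectors act only in the space variable and the time exponent $\theta$ enters exclusively through Minkowski's inequality (Remark \ref{rem2.2}); this $T$-uniformity is precisely what is needed when the corollary is invoked in Remark \ref{rem 4.1} and in Lemma \ref{lem4.3}. If a soft argument were preferred instead, one could observe that both sides define complete norms on the same vector space and apply the open–mapping theorem, but the direct estimates are more transparent and exhibit the $T$-independence explicitly.
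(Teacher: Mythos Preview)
Your proposal is correct and follows exactly the approach implicit in the paper: the corollary is stated there without proof precisely because both inequalities are already contained, with explicit constants, in the proof of Proposition~\ref{prop6.1}, and your write-up simply extracts them (low/high frequency split for one direction, the embedding chain plus the displayed $q<0$ estimate for the other). Your additional remark on the $T$-independence of the constants is accurate and relevant for the later applications.
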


In what follows, we are concerned with the existence result for the
linear problem (\ref{R-E10})-(\ref{R-E11}) in more general Besov
spaces, which is used to establish the local existence in
$B^{\sigma}_{2,1}$ for the quasilinear symmetric system (\ref{R-E5})
with (\ref{R-E9}).
\begin{prop}\label{prop6.2}
Let $T>0$ and let $r\in[1,\infty), s>0$ and $V_{0}-\bar{V}\in
B^{s}_{2,r}$. Assume that
\begin{eqnarray*}&&V-\bar{V}\in \left\{
\begin{array}{l}
 \mathcal{C}_{T}(B^{s}_{2,r})\cap \mathcal{C}^{1}_{T}(B^{s-1}_{2,r})\ \ \mbox{if}\ \ s>1+d/2, \mbox{or}\ s=1+d/2 \ \mbox{and}\ r=1;\\
 \mathcal{C}_{T}(B^{s+\varepsilon}_{2,\infty})\cap \mathcal{C}^{1}_{T}(B^{s-1+\varepsilon}_{2,\infty})\ \mbox{for}\ \varepsilon>0\ \mbox{if}\ \ s=1+d/2 \ \mbox{and}\ r>1;\\
 \mathcal{C}_{T}(B^{1+d/2}_{2,\infty}\cap W^{1,\infty})\cap\mathcal{C}^{1}_{T}(B^{d/2}_{2,\infty}\cap L^{\infty}) \ \ \mbox{if}\ \ 0<s<1+d/2;\\
\end{array} \right.
\end{eqnarray*}
\begin{eqnarray*}
V(t,x)\in\mathcal{O}_{1}\ \ \ \mbox{for  any}\ \ (t,x)\in Q_{T},
\end{eqnarray*}
where $\mathcal{O}_{1}$ is a bounded open convex set in
$\mathbb{R}^{N}$ satisfying $\bar{\mathcal{O}}_{1}\subset
\mathcal{O}_{V}.$ Then the system (\ref{R-E10})-(\ref{R-E11}) has a
unique solution $\hat{V}$ belongs to
$$\hat{V}-\bar{V}\in\widetilde{\mathcal{C}}_{T}(B^{s}_{2,r}) \cap\widetilde{\mathcal{C}}^{1}_{T}(B^{s-1}_{2,r})$$
and satisfies
\begin{eqnarray}
\|\hat{V}-\bar{V}\|_{\widetilde{L}^{\infty}_{T}(B^{s}_{2,r})}\leq\|V_{0}-\bar{V}\|_{B^{s}_{2,r}}e^{C\int^{T}_{0}(a_{1}(t)+a_{2}(t))dt},\label{R-E83}
\end{eqnarray}
where $$a_{1}(t)=\|V_{t}(t,\cdot)\|_{L^\infty},$$
\begin{eqnarray*}&a_{2}(t)=\left\{
\begin{array}{l}
\|V(t,\cdot)-\bar{V}\|_{B^{s}_{2,r}} \ \mbox{if}\ \ s>1+d/2,\ \mbox{or}\ \ s=1+d/2 \ \mbox{and}\ r=1;\\
\|V(t,\cdot)-\bar{V}\|_{B^{s+\varepsilon}_{2,\infty}} \ \mbox{for}\ \ \varepsilon>0\ \ \mbox{if}\ \ s=1+d/2 \ \mbox{and}\ r>1;\\
\|V(t,\cdot)-\bar{V}\|_{B^{1+d/2}_{2,\infty}\cap W^{1,\infty}}\ \
\mbox{if}\ \ 0<s<1+d/2.
\end{array} \right.
\end{eqnarray*}
\end{prop}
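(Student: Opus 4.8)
The plan is to follow the classical scheme for linear symmetric hyperbolic systems in the Littlewood--Paley framework (cf. \cite{BCD}): build approximate solutions by Friedrichs' regularization, derive frequency-localized energy estimates closed by Gronwall's lemma, pass to the limit, and obtain uniqueness and time-continuity by the same energy method one derivative lower. First I would set $\hat{Z}:=\hat{V}-\bar{V}$ (so $\hat{Z}|_{t=0}=V_0-\bar{V}\in B^s_{2,r}$), rewrite (\ref{R-E10}) as
$$\tilde{A}^0(V)\,\partial_t\hat{Z}+\sum_{j=1}^d\tilde{A}^j(V)\,\partial_{x_j}\hat{Z}=0,$$
and record that $\bar{\mathcal{O}}_1\subset\mathcal{O}_V$ forces $C^{-1}I_N\le\tilde{A}^0(V)\le CI_N$ uniformly on $Q_T$, with $\tilde{A}^0(V)$ symmetric positive definite and the $\tilde{A}^j(V)$ symmetric. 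I would then introduce the spectral truncations $J_n:=\sum_{-1\le q\le n}\Delta_q$ and solve, for each $n$, the linear ODE $\partial_t\hat{Z}^n=-\tilde{A}^0(V)^{-1}J_n\big(\sum_j\tilde{A}^j(V)\partial_{x_j}J_n\hat{Z}^n\big)$ with data $J_n(V_0-\bar{V})$ in the finite-band space $J_nL^2$; this is globally well posed by the Cauchy--Lipschitz theorem, preserves the frequency band, and---since $J_n$ is a self-adjoint projection and the $\tilde{A}^j(V)$ are symmetric---admits an $L^2$-energy identity giving a bound on $[0,T]$ uniform in $n$.

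The heart of the matter is the a priori estimate (\ref{R-E83}). Applying $\Delta_q$ to the equation for $\hat{Z}$ gives
$$\tilde{A}^0(V)\,\partial_t\Delta_q\hat{Z}+\sum_{j=1}^d\tilde{A}^j(V)\,\partial_{x_j}\Delta_q\hat{Z}=-\sum_{j=1}^d\tilde{A}^0(V)\big[\Delta_q,\tilde{A}^0(V)^{-1}\tilde{A}^j(V)\big]\partial_{x_j}\hat{Z};$$
taking the $L^2$ inner product with $\Delta_q\hat{Z}$, integrating the transport part by parts via the symmetry of the $\tilde{A}^j$ (which leaves $\langle\mathrm{div}\,\mathbb{A}(V)\Delta_q\hat{Z},\Delta_q\hat{Z}\rangle$, where $\mathrm{div}\,\mathbb{A}(V)=\tilde{A}^0(V)_t+\sum_j\tilde{A}^j(V)_{x_j}$), and dividing by $(\|\Delta_q\hat{Z}\|_{L^2}^2+\epsilon)^{1/2}$ before letting $\epsilon\to0$ exactly as in Lemma~\ref{lem3.1}, one reaches
$$\|\Delta_q\hat{Z}(t)\|_{L^2}\le\|\Delta_q\hat{Z}_0\|_{L^2}+C\int_0^t\sum_j\big\|[\Delta_q,\tilde{A}^0(V)^{-1}\tilde{A}^j(V)]\partial_{x_j}\hat{Z}\big\|_{L^2}d\tau+C\int_0^t\|\mathrm{div}\,\mathbb{A}(V)\|_{L^\infty}\|\Delta_q\hat{Z}\|_{L^2}d\tau.$$
Then I would multiply by $2^{qs}$, estimate the commutator by $\big\|2^{qs}\|[\Delta_q,a]\partial_jf\|_{L^2}\big\|_{\ell^r}\le Cc_q(\|\nabla a\|_{L^\infty}\|f\|_{B^s_{2,r}}+\|\nabla f\|_{L^\infty}\|a\|_{B^s_{2,r}})$ with $a=\tilde{A}^0(V)^{-1}\tilde{A}^j(V)$, control $a$ via the composition estimate (Proposition~\ref{prop2.3} and its $L^\infty$-variant) by $V-\bar{V}$, and---crucially---take the $\ell^r(q\ge-1)$ norm \emph{last}, which is precisely what produces the Chemin--Lerner space $\widetilde{L}^\infty_T(B^s_{2,r})$ in place of $L^\infty_T(B^s_{2,r})$. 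This yields $\|\hat{Z}\|_{\widetilde{L}^\infty_t(B^s_{2,r})}\le\|\hat{Z}_0\|_{B^s_{2,r}}+C\int_0^t(a_1+a_2)\|\hat{Z}\|_{\widetilde{L}^\infty_\tau(B^s_{2,r})}d\tau$, with $a_1=\|V_t\|_{L^\infty}$ arising from $\mathrm{div}\,\mathbb{A}(V)$ and $a_2$ the norm of $V-\bar{V}$ controlling the coefficient $a$, so (\ref{R-E83}) follows from Gronwall. The split into three regularity regimes is exactly what legitimizes this accounting: for $s>1+d/2$ (or $s=1+d/2$, $r=1$) one has $B^s_{2,r}\hookrightarrow W^{1,\infty}$, so $\|\nabla a\|_{L^\infty}$ and $\|\nabla\hat{Z}\|_{L^\infty}$ are absorbed; for $s=1+d/2$, $r>1$ that embedding fails and one pays an $\varepsilon$ of regularity on $V$; for $0<s<1+d/2$, $B^s_{2,r}$ does not control $\nabla V$ in $L^\infty$, whence the extra hypothesis $V-\bar{V}\in B^{1+d/2}_{2,\infty}\cap W^{1,\infty}$ and the matching $a_2$.

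With the uniform bound in hand, I would let $n\to\infty$: $\{\hat{Z}^n\}$ is bounded in $\widetilde{L}^\infty_T(B^s_{2,r})$ and, reading $\partial_t\hat{Z}^n$ off the equation and using product estimates, in $\widetilde{\mathcal{C}}^1_T(B^{s-1}_{2,r})$; by Proposition~\ref{prop2.1} and Ascoli a subsequence converges in $\mathcal{C}([0,T];B^{s-1}_{2,r})$ on each compact subset of $\mathbb{R}^d$, enough to pass to the limit in the regularized equation, and Fatou's property recovers $\hat{V}-\bar{V}\in\widetilde{L}^\infty_T(B^s_{2,r})$. Uniqueness (and convergence of the full sequence) comes from running the same energy estimate at the $B^{s-1}_{2,r}$ level on the difference of two solutions with vanishing data, as in the uniqueness part of Proposition~\ref{prop3.1}. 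Finally $\hat{V}-\bar{V}\in\mathcal{C}_T(B^s_{2,r})$ follows from the standard argument---each $t\mapsto\|\Delta_q(\hat{V}(t)-\bar{V})\|_{L^2}$ is continuous and $\sum_{q\ge-1}2^{qs}\|\Delta_q(\hat{V}(t)-\bar{V})\|_{L^2}$ converges uniformly on $[0,T]$ because $\hat{V}-\bar{V}\in\widetilde{L}^\infty_T(B^s_{2,r})$---and the equation transfers this to $\partial_t\hat{V}\in\mathcal{C}_T(B^{s-1}_{2,r})$.

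The step I expect to be the main obstacle is the borderline bookkeeping of the commutator and composition estimates that delivers the \emph{precise} exponent $\int_0^T(a_1(t)+a_2(t))\,dt$ with $a_1,a_2$ exactly as stated. Away from criticality this is automatic, but at $s=1+d/2$ with $r>1$ and below criticality one must check that neither the commutator bound nor the chain-rule estimate (Proposition~\ref{prop2.3}) for $\tilde{A}^0(V)^{-1}\tilde{A}^j(V)$ demands more smoothness of $V$ than the hypotheses provide---which is exactly why the statement is split into three cases and the $W^{1,\infty}$, resp. $B^{s+\varepsilon}_{2,\infty}$, control must be imposed by hand.
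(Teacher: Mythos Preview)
Your proposal is correct and follows essentially the same scheme as the paper: Friedrichs regularization (the paper uses the sharp Fourier cutoff $\mathbb{P}_k f=\mathcal{F}^{-1}(\mathbf{1}_{B(0,k)}\mathcal{F}f)$ rather than your dyadic $J_n$, but this is cosmetic), the frequency-localized energy identity of Lemma~\ref{lem3.1} closed by the case-dependent commutator bounds (which the paper records as a separate Lemma~6.1 quoted from \cite{BCD}), Gronwall for (\ref{R-E83}), weak-$\ast$ compactness plus Ascoli/Cantor diagonal to pass to the limit, and uniqueness directly from (\ref{R-E83}) with zero data (so at level $B^{s}_{2,r}$, not $B^{s-1}_{2,r}$ as you wrote---your version would run into trouble with the commutator when $s-1\le 0$).

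The one place where the paper does something you do not is the time-continuity in $B^{s}_{2,r}$ for $0<s\le 1$. Your ``uniform convergence of the series'' argument is literally the $r=1$ statement, and even its $\ell^r$ analogue relies on knowing that each $t\mapsto\Delta_q\hat Z(t)$ is continuous in $L^2$; the paper obtains this for $s>1$ via $B^{s-1}_{2,r}\hookrightarrow L^2$, but for $0<s\le 1$ that embedding fails, so it instead re-runs the construction with only the initial data smoothed ($\hat Z_k|_{t=0}=\mathbb{P}_k\hat Z_0$, no truncation in the equation), invokes the $s>1$ case to solve, and shows $(\hat Z_k)$ is Cauchy in $\mathcal{C}_T(B^{s}_{2,r})$ via (\ref{R-E83}) applied to the differences. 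Your route can be repaired here (observe that $\Delta_q\partial_t\hat Z\in L^\infty_T(L^2)$ for each fixed $q$, hence each block is Lipschitz in $L^2$), but the paper's two-case split is what actually appears.
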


\begin{proof}
The energy inequality (\ref{R-E83}) can follow from the proof of
Lemma \ref{lem3.1} at a similar way, however, we should point out
the estimates of commutator
$\|[\Delta_{q},\tilde{A}^{0}(V)^{-1}\tilde{A}^{j}(V)]\hat{Z}_{x_{j}}\|_{L^2}$
for general indexes $s,r$. Precisely, from \cite{BCD}, we have
\begin{lem}\label{lem6.1} For all $t\in[0,T]$ and $s>0$, it holds that
\begin{eqnarray*}
&&2^{qs}\|[\Delta_{q},\tilde{A}^{0}(V)^{-1}\tilde{A}^{j}(V)]\hat{Z}_{x_{j}}\|_{L^2}\nonumber\\
&\leq& \left\{
\begin{array}{l}
Cc_{q}\|\nabla(\tilde{A}^{0}(V)^{-1}\tilde{A}^{j}(V))\|_{B^{s-1}_{2,r}}\|\hat{Z}\|_{B^{s}_{2,r}},\
\mbox{if} \ s>1+d/2,\ \mbox{or}\ \ s=1+d/2 \ \mbox{and}\ r=1;\\
Cc_{q}\|\nabla(\tilde{A}^{0}(V)^{-1}\tilde{A}^{j}(V))\|_{B^{s-1+\varepsilon}_{2,\infty}}\|\hat{Z}\|_{B^{s}_{2,r}},\
\mbox{for}\ \varepsilon>0\ \mbox{if}\ \ s=1+d/2 \ \mbox{and}\
r>1;\\
Cc_{q}\|\nabla(\tilde{A}^{0}(V)^{-1}\tilde{A}^{j}(V))\|_{B^{d/2}_{2,\infty}\cap
L^{\infty}}\|\hat{Z}\|_{B^{s}_{2,r}},\ \mbox{if}\ \ 0<s<1+d/2,
\end{array} \right.
\end{eqnarray*}
where $\|c_{q}(t)\|_{\ell^1}\leq1$, for all $t\in[0,T]$.
\end{lem}
Note that these facts, similar to the steps
(\ref{R-E16})-(\ref{R-E23}), we readily deduce that
\begin{eqnarray}
\|\Delta_{q}\hat{Z}\|_{L^{\infty}_{t}(L^2)}&\leq&\|\Delta_{q}\hat{Z}_{0}\|_{L^2}
+C\int^{t}_{0}c_{q}(\tau)2^{-qs}a_{2}(\tau)\|\hat{Z}\|_{B^{s}_{2,r}}d\tau\nonumber
\\&&+C\int^{t}_{0}\|\mathrm{div}\mathbb{A}(V)\|_{L^\infty}\|\Delta_{q}\hat{Z}\|_{L^2}d\tau. \label{R-E84}
\end{eqnarray}
Then we multiply both sides by $2^{qs}$ and take the $\ell^{r}$ norm
to obtain
\begin{eqnarray}
\|\hat{Z}\|_{\widetilde{L}^{\infty}_{T}(B^{s}_{2,r})}&\leq&\|\hat{Z}_{0}\|_{B^{s}_{2,r}}
+C\int^{T}_{0}(a_{1}(t)+a_{2}(t))\|\hat{Z}\|_{\widetilde{L}^{\infty}_{t}(B^{s}_{2,r})}dt\nonumber.\label{R-E85}
\end{eqnarray}
Applying Gronwall's inequality leads to the inequality (\ref{R-E83})
directly.

To show the existence of solution $\hat{V}(t,x)$, we use the
classical Friedrichs' regularization method, which was used in
\cite{CH2} for example. More precisely, we consider the approximate
system for $\hat{Z}_{k}=\hat{V}_{k}-\bar{V}$:
\begin{equation}
\partial_{t}\hat{Z}_{k}+\sum_{j=1}^{d}\tilde{A}^{0}(V)^{-1}\mathbb{P}_{k}\Big(\tilde{A}^{j}(V)\partial_{x_{j}}\hat{Z}_{k}\Big)=0,\label{R-E86}
\end{equation}
with
\begin{equation}
\hat{Z}_{k}|_{t=0}=\mathbb{P}_{k}\hat{Z}_{0},\label{R-E87}
\end{equation}
where $\hat{Z}_{0}=\hat{V}_{0}-\bar{V}$ and $\mathbb{P}_{k}$ is the
cut-off operator on $L^2(\mathbb{R}^{d})$ defined by
$$\mathbb{P}_{k}f:=\mathcal{F}^{-1}(\mathbf{1}_{B(0,k)}\mathcal{F}f).$$
Denote the space
$$L^2_{k}:=\{f\in L^2(\mathbb{R}^{d}):\mathrm{supp}\mathcal{F}f\subset B(0,k)\},$$
where $B(0,k)$ is the ball with center $0$ and radius $k$.

From Lemma \ref{lem2.1}, we can see that the operator
$\partial_{x_{j}}$ is continuous on $L^2_{k}$. Furthermore, it turns
out that the linear operator
$$\hat{Z}\mapsto\sum_{j=1}^{d}\tilde{A}^{0}(V)^{-1}\mathbb{P}_{k}\Big(\tilde{A}^{j}(V)\partial_{x_{j}}\hat{Z}\Big)$$
is also continuous on $L^2_{k}$, since the functions
$\tilde{A}^{0}(V)^{-1}$ and $\tilde{A}^{j}(V)$ are both bounded in
$[0,T]\times \mathbb{R}^{d}$. Thus, the approximate system
(\ref{R-E86}) appears to a linear system of ordinary differential
equations in $L^2_{k}$, which implies the existence of a unique
function $\hat{V}_{k}(t,x)$ such that
$\hat{Z}_{k}(t,x)=\hat{V}_{k}(t,x)-\bar{V}\in
\mathcal{C}^{1}([0,T],L^2_{k})$ is the solution of
(\ref{R-E86})-(\ref{R-E87}). Furthermore, it follows from the
spectral localization that $\hat{Z}_{k}(t,x)\in
\mathcal{C}^{1}([0,T], B^{\alpha}_{2,r})$ for any $\alpha\in
\mathbb{R}$.

Using the facts that the operator $\mathbb{P}_{k}$ is self-adjoint
on $L^2$ and $\mathbb{P}_{k}\hat{Z}_{k}=\hat{Z}_{k}$, we proceed
exactly as in the proof of the inequality (\ref{R-E83}) and obtain
\begin{eqnarray}
\sup_{t\in[0,T]}\|\hat{Z}_{k}(t)\|_{B^{s}_{2,r}}&\leq&\|\mathbb{P}_{k}\hat{Z}_{0}\|_{B^{s}_{2,r}}e^{C\int^{T}_{0}(a_{1}(t)+a_{2}(t)+a^2_{2}(t))dt}
\nonumber\\&\leq&C\|V_{0}-\bar{V}\|_{B^{s}_{2,r}}e^{C\int^{T}_{0}(a_{1}(t)+a_{2}(t)+a^2_{2}(t))dt}\nonumber\\&\leq&C.\label{R-E88}
\end{eqnarray}
Here and below, the constant $C>0$ independent of $k$. Furthermore,
it follows from  (\ref{R-E86}) and (\ref{R-E88}) that
\begin{eqnarray}
\sup_{t\in[0,T]}\|\partial_{t}\hat{Z}_{k}\|_{B^{s-1}_{2,r}}\leq C.
\label{R-E89}
\end{eqnarray}

Therefore, we deduce that the approximative solution sequence
$\{\hat{Z}_{k}=\hat{V}_{k}-\bar{V}\}$ to (\ref{R-E86})-(\ref{R-E87})
is uniformly bounded in $\mathcal{C}([0,T],
B^{s}_{2,r})\cap\mathcal{C}^{1}([0,T], B^{s-1}_{2,r})$. Moreover, it
weak $^{\star}$-converges (up to a subsequence) to some function
$\hat{V}$ such that $\hat{V}-\bar{V}\in
L^{\infty}([0,T],B^{s}_{2,r})$ in terms of the Banach-Alaoglu
Theorem (see~\cite{T}, Remark 2, p.180). Since
$\{\partial_{t}\hat{V}_{k}\} $ is also uniformly bounded in
$\mathcal{C}([0,T],B^{s-1}_{2,r} )$(it weak $^{\star}$-converges to
$\hat{V}_{t}$ in $L^{\infty}([0,T],B^{s-1}_{2,r})$, then
$\{\hat{V}_{k}-\bar{V}\}$ is uniformly bounded in
Lip$([0,T],B^{s-1}_{2,r})$, hence uniformly equicontinuous on
$[0,T]$ with the norm in $B^{s-1}_{2,r}$. From Proposition
\ref{prop2.1}, Ascoli-Arzela theorem and Cantor diagonal process, we
arrive at
\begin{eqnarray}\phi_{j}(\hat{V}_{k}-\bar{V})\rightarrow \phi_{j} (\hat{V}-\bar{V})\ \ \ \mbox{in}\ \
\mathcal{C}([0,T],B^{s-1}_{2,r})\label{R-E90}
\end{eqnarray} as $k\rightarrow\infty$, for
$\phi_{j}\in C_{c}^{\infty}$ which is supported in the ball
$B(0;j+1)$ and equal to $1$ on $B(0;j)$. The property of strong
convergence and
\begin{eqnarray}\lim_{n\rightarrow\infty}\mathbb{P}_{k}(V_{0}-\bar{V})=V_{0}-\bar{V}\ \ \mbox{in}\ \ \ B^{s}_{2,r}\label{R-E91}
\end{eqnarray}
enable us to pass to the limit in (\ref{R-E86})-(\ref{R-E87}) and
$\hat{V}$ is the solution of (\ref{R-E10})-(\ref{R-E11}) in the
sense of distribution. Next, we check the solution $\hat{V}$ has the
required regularity. Indeed,
$\hat{V}-\bar{V}\in\mathcal{C}([0,T],B^{s-1}_{2,r})$. On the other
hand, since $\hat{V}-\bar{V}\in L^{\infty}([0,T],B^{s}_{2,r})$, we
have
\begin{eqnarray}\|2^{qs}\|\Delta_{q}(\hat{V}-\bar{V})\|_{L^2}\|_{\ell^{r}}<+\infty.\label{R-E92}
\end{eqnarray}
The inequality (\ref{R-E92}) implies that there exists an integer
$q_{0}$ such that
\begin{eqnarray}\Big\{\sum_{q\geq q_{0}}\Big(2^{qs}\|\Delta_{q}(\hat{V}-\bar{V})\|_{L^2}\Big)^{r}\Big\}^{1/r}<\frac{\varepsilon}{4}\label{R-E93}
\end{eqnarray}
for any positive constant $\varepsilon$. Then, we have
\begin{eqnarray}
&&\|\hat{V}(t)-\hat{V}(t')\|_{B^{s}_{2,r}}\nonumber\\&\leq&\Big\{\sum_{q<
q_{0}}\Big(2^{qs}\|\Delta_{q}(\hat{V}(t)-\hat{V}(t'))\|_{L^2}\Big)^{r}\Big\}^{1/r}+2\Big\{\sum_{q\geq
q_{0}}\Big(2^{qs}\|\Delta_{q}(\hat{V}(t)-\bar{V})\|_{L^2}\Big)^{r}\Big\}^{1/r}\nonumber\\&\leq&
C\sum_{q<q_{0}}2^{qs}\|\Delta_{q}(\hat{V}(t)-\hat{V}(t'))\|_{L^2}+\frac{\varepsilon}{2}\nonumber\\&\leq&C2^{q_{0}s}
\|\hat{V}(t)-\hat{V}(t')\|_{L^2}+\frac{\varepsilon}{2}.\label{R-E94}
\end{eqnarray}
Since
$\hat{V}-\bar{V}\in\mathcal{C}([0,T],B^{s-1}_{2,r})\hookrightarrow\mathcal{C}([0,T],L^2)$
for the case of $s>1$, the first term on the right-hand side of
(\ref{R-E94}) tends to zero where $t'$ goes to $t$. This implies
that $\hat{V}-\bar{V}$ is continuous in time with values in
$B^{s}_{2,r}$. Using the fact that $\hat{V}$ is a solution of
(\ref{R-E10})-(\ref{R-E11}), we further conclude that
$\hat{V}-\bar{V}\in\mathcal{C}([0,T], B^{s}_{2,r})
\cap\mathcal{C}^{1}([0,T],B^{s-1}_{2,r})$. In the case where
$0<s\leq1$, we consider the regularized system by smoothing out the
initial data:
\begin{equation}
\left\{
\begin{array}{l}
\tilde{A}^{0}(V)\partial_{t}\hat{Z}_{k}+\sum_{j=1}^{d}\tilde{A}^{j}(V)\partial_{x_{j}}\hat{Z}_{k}=0,\\
\hat{Z}_{k}|_{t=0}=\mathbb{P}_{k}\hat{Z}_{0}.\end{array} \right.
\label{R-E95}
\end{equation}
Thanks to the above result for the case of $s>1$, the solution
$\hat{Z}_{k}=\hat{V}_{k}-\bar{V}$ of (\ref{R-E95}) is well defined
on $[0,T]$ and belongs to $\mathcal{C}([0,T], B^{\alpha}_{2,r})
\cap\mathcal{C}^{1}([0,T],B^{\alpha-1}_{2,r})$ for any $\alpha>1$.
Furthermore, the function
$\delta\hat{Z}_{k}:=\hat{Z}_{k+1}-\hat{Z}_{k}$ satisfies
\begin{equation}
\left\{
\begin{array}{l}
\tilde{A}^{0}(V)\partial_{t}\delta\hat{Z}_{k}+\sum_{j=1}^{d}\tilde{A}^{j}(V)\partial_{x_{j}}\delta\hat{Z}_{k}=0,\\
\delta\hat{Z}_{k}|_{t=0}=(\mathbb{P}_{k+1}-\mathbb{P}_{k})\hat{Z}_{0}.\end{array}
\right. \label{R-E96}
\end{equation}
Similar to (\ref{R-E83}), we have
\begin{eqnarray}
\sup_{t\in[0,T]}\|\delta\hat{Z}_{k}(t)\|_{B^{s}_{2,r}}\leq\|(\mathbb{P}_{k+1}-\mathbb{P}_{k})\hat{Z}_{0}\|_{B^{s}_{2,r}}e^{C\int^{T}_{0}(a_{1}(t)+a_{2}(t))dt}.\label{R-E97}
\end{eqnarray}
As $\hat{Z}_{0}=\hat{V}_{0}-\bar{V}$ belongs to
$B^{s}_{2,r}(0<s\leq1)$, the sequence
$(\mathbb{P}_{k}\hat{Z}_{0})_{k\in\mathbb{N}}$ converges to
$\hat{Z}_{0}$. Thus, it follows from (\ref{R-E97}) that the sequence
$(\hat{Z}_{k})_{k\in\mathbb{N}}$ is Cauchy in $\mathcal{C}([0,T],
B^{s}_{2,r})$ and converges to some
$\hat{Z}=\hat{V}-\bar{V}\in\mathcal{C}([0,T], B^{s}_{2,r})
\cap\mathcal{C}^{1}([0,T],B^{s-1}_{2,r})$ in (\ref{R-E95}), which
is, of course, a solution of (\ref{R-E10})-(\ref{R-E11}).

Over all, we conclude that $\hat{V}(x,t)$ is the solution of
(\ref{R-E10})-(\ref{R-E11}) satisfying
$\hat{V}-\bar{V}\in\mathcal{C}([0,T], B^{s}_{2,r})
\\ \cap\mathcal{C}^{1}([0,T],B^{s-1}_{2,r})$ for $s>0$, furthermore, we
arrive at $\hat{V}-\bar{V}\in\widetilde{\mathcal{C}}_{T}(
B^{s}_{2,r}) \cap\widetilde{\mathcal{C}}^{1}_{T}(B^{s-1}_{2,r})$.

Finally, the uniqueness is merely a consequence of (\ref{R-E83}).
This completes the proof of Proposition \ref{prop6.2}.
\end{proof}

\section*{Acknowledgments}
J. Xu is partially supported by the NSFC (11001127), China
Postdoctoral Science Foundation (20110490134) and Postdoctoral
Science Foundation of Jiangsu Province (1102057C). He would like to
thank Professor Kawashima for his enthusiastic communication and
hospitality. The second author (S. K.) is partially supported by
Grant-in-Aid for Scientific Research (A) 22244009.

\end{document}